\definecolor{dark-blue}{rgb}{0,0,0.6}
\definecolor{Purple}{rgb}{0.2,0,0.25}
\theoremstyle{plain} 
\newtheorem{thm}{Theorem}[section]
\newtheorem{lem}[thm]{Lemma}
\newtheorem{defin}[thm]{Definition}
\newtheorem{cor}[thm]{Corollary}
\theoremstyle{definition}
\newtheorem{remark}[thm]{Remark}
\newtheorem{example}[thm]{Example}
\newcommand{\wt}{\widetilde}
\newcommand{\Int}{\textnormal{int}}
\newcommand{\R}{\mathbb{R}}
\newcommand{\N}{\mathbb{N}}
\newcommand{\Rep}{\textnormal{Re}}
\newcommand{\wh}{\widehat}
\newcommand{\ol}{\overline}
\numberwithin{equation}{section}
\newcommand{\bref}[1]{\textbf{\ref{#1}}} %bold font for any cross reference 
\newcommand{\beqref}[1]{\textbf{(\ref{#1})}} %bold font for any equation number
\date{September 8, 2025} 
\subjclass[2020]{41A50, 41A52, 41A65, 90C25, 46N10, 90C26, 46B20, 68U05, 65D18}
\keywords{Affine subspace, best approximation pair (BAP),  existence of a BAP, normed space, parallel intervals, reflexive Banach space, strictly convex, uniqueness of a BAP, weakly sequentially compact}
\begin{document}

\title[The BAP problem in normed spaces]{The best approximation pair  problem relative to two subsets in a normed space}

\author{Daniel Reem}
\address{The Center for Mathematics and Scientific Computation (CMSC), University of Haifa, Mt. Carmel, 3498838 Haifa, Israel.} 
\email{(Daniel Reem) dream@math.haifa.ac.il}
\author{Yair Censor}
\address{ Department of Mathematics, University of Haifa, Mt. Carmel, 3498838 Haifa, Israel.} 
\email{(Yair Censor) yair@math.haifa.ac.il}

\maketitle

\begin{abstract}
In the classical best approximation pair (BAP) problem, one is given two nonempty, closed, convex and disjoint subsets in a finite- or an infinite-dimensional Hilbert space, and the goal is to find a pair of points, each from each subset, which realizes the distance between the subsets. We discuss the problem in more general normed spaces and with possibly non-convex subsets, and focus our attention on the issues of uniqueness and existence of the solution to the problem. As far as we know, these fundamental issues have not received much attention. We present several sufficient geometric conditions for the (at most) uniqueness of a BAP. These  conditions are related to the structure and the relative orientation of the boundaries of the subsets and to the norm. We also present many sufficient  conditions for the existence of a BAP. Our results significantly extend the horizon of a recent algorithm for solving the BAP problem [Censor, Mansour, Reem, J. Approx. Theory (2024)]. The paper also shows, perhaps for the first time, how wide is the scope of the BAP problem in terms of the scientific communities which are involved in it (frequently independently) and in terms of its applications.
\end{abstract}

\section{Introduction}\label{sec:Intro}
\subsection{Background and contribution} 
The classical best approximation pair (BAP) problem is the following problem: there are two nonempty, disjoint, closed and convex subsets $A$ and $B$ in a finite- (i.e., Euclidean) or an infinite-dimensional real Hilbert space $(X,\|\cdot\|)$, and the goal is to find a pair of points, each from each subset, which realizes the distance between the subsets. In other words, the BAP problem is the following minimization problem: to find  a pair $(\wt{a},\wt{b})\in A\times B$ such that 
\begin{equation}
 dist(A,B):=\inf\{\|a-b\|\,|\, a\in A, b\in B\}=\inf f(A\times B)=f(\wt{a},\wt{b})=\|\wt{a}-\wt{b}\|
\end{equation}
where $f:X^2\to [0,\infty)$ is defined by $f(x,y):=\|x-y\|$ for all $(x,y)\in X^2$. This problem has a long history which goes back to the classical 1959 work of Cheney and Goldstein \cite{CheneyGoldstein1959jour} (see also \cite{Goldstein1967book}) and, in some sense, even before (the paper \cite{Nicolescu1938jour} of Nicolescu from 1938), and continues with various other works such as, e.g., \cite{AharoniCensorJiang2018jour,BauschkeBorwein1993jour,
BauschkeBorwein1994jour,BauschkeBorweinLewis1997inproc,
GilbertJohnsonKeerthi1988jour,KopeckaReich2004jour,KopeckaReich2012jour,MatouskovaReich2003jour,
YoulaVelasco1986jour,BauschkeCombettesLuke2004jour,
GoldburgMarks1985jour,Luke2008jour,BauschkeSinghWang2022jour,
GubinPolyakRaik1967jour,CensorZaknoon2018jour,Dax2006jour,
Willner1968jour,LevitinPolyak1966jour}. See also \cite{CaseiroFacasVicenteVitoria2019jour,
FacasVicenteGoncalvesVitoria2014jour,GoncalvesFacasVicenteVitoria2015jour,DuPreKass1992jour,GrossTrenkler1996jour} for the Euclidean linear case, namely when both $A$ and $B$ are affine subspaces and the space is Euclidean, and \cite{Deutsch2001book,Garkavi1970jour,Singer1970book,Vlasov1973jour} for the case where one of the subsets is a point (this is the so-called ``best approximation problem'', that is, the problem of projecting a point on another subset). 

The BAP problem has many applications in science and technology: for applications in signal processing see \cite{Combettes1994jour,CombettesBondon1999jour,GoldburgMarks1985jour,YoulaVelasco1986jour}, and for applications in solid modelling, computer graphics, robotics, collision detection, computer aided design, virtual reality and more, as well as for algorithms in convex and non-convex 2D and 3D Euclidean settings, see the following very partial list of references and the references therein: \cite{LinGottschalk1998inproc,SatoHirataMaruyamaArita1996inproc,
ElberGrandine2008inproc,JohnsonCohen1998inproc,ChenChenWangXuYongPaul2009jour,PatogluGillespie2002inproc,
Quinlan1994inproc,FanWangTongLiTang2024inproc,ChangChoiKimWang2011jour,
GilbertJohnsonKeerthi1988jour,ZeghloulRambeaud1996jour,
CameronCulley1986inproc,EhmannLin2000inproc,
LinManochaKim2018inbook,SonYoonKimElber2020jour,Schwartz1981jour,DobkinKirkpatrick1985jour}.

It can be seen that various scientific communities are involved in the research related to the BAP problem. One of the byproducts of this wide involvement, often with weak relations between the various participants and communities, is the widespread non-uniform terminology which characterizes this domain of research. For instance, a BAP may be called: ``a distance pair'', ``a proximal pair'', ``a best proximity pair'', ``closest pair'', ``near point pair'', ``proximal points'', ``proximinal points'', ``mutually nearest points''. Anyway, if one restricts oneself to the mathematical community in general, and to certain sub-communities in particular (such as the ones working in function analysis, nonlinear analysis, optimization, convex analysis), then one can see that most of their attention regarding the BAP problem has been focused on the above-mentioned classical setting, but there are several works, which are scattered allover the literature,  which go beyond this setting, such as \cite{Kothe1969book,Luo2014jour,Narang1991jour,Sankar-RajAnthony-Eldred2014jour,Pai1974jour,Stiles1965b-jour,Xu1983jour,Xu1988jour} (normed spaces beyond Hilbert spaces, possibly nonconvex sets), \cite{BrezisMironescuShafrir2016jour,DamaBajracharya2018jour,LeviShafrir2014jour,
RubinsteinShafrir2007jour,Narang1976jour,Narang1983jour,Narang1984jour,Nicolescu1938jour} (metric and non-metric spaces) and \cite{Luke2008jour,MoralesSilvaGao2017inbook,VoiseiZalinescu2011jour,Zalinescu2022jour} (nonconvex sets in Euclidean spaces). However, the focus of many of these works is on algorithmic or characterization aspects related to the BAP problem or on other issues. 

In this work we discuss the BAP problem in  normed spaces which are more general than Hilbert spaces and we allow possibly nonconvex subsets. We focus our attention on the fundamental issues of uniqueness and existence of the solution to the problem which, to the best of our knowledge, have not received much attention, especially beyond the classical setting. While the core of our work are the new mathematical results that we introduce, for the sake of completion, and also because (as it has turned out) not everything has been smooth along the way, we also devote some space for the historical aspect of the problem. Another contribution of this work is the semi-survey conducted in this section which describes, perhaps for the first time, how wide is the scope of the BAP problem in terms of the involved scientific communities which are involved in it, frequently independently and without being aware of each other.

One of our main goals is to formulate conditions which will imply the (at most) uniqueness of the BAP. Our motivation comes from the recent work \cite{CensorMansourReem2024jour} in which we discussed the alternating simultaneous Halpern-Lions-Wittman-Bauschke  (A-S-HLWB) algorithm for solving the BAP problem in the  Euclidean space $\R^k$ ($k\in\N$), under the additional assumption that both $A$ and $B$ are finite intersections of closed and convex subsets, that is, $A=\cap_{i=1}^m A_i$ and $B=\cap_{j=1}^{n} B_j$ for some $m,n\in\N$ (this  assumption leads to the computational advantage that one can orthogonally project iteratively onto the individual subsets $A_i$ and $B_j$, $i\in\{1,2,\ldots,m\}$, $j\in\{1,2,\ldots,n\}$ instead of projecting directly onto $A$ and $B$, a task which can be rather demanding from the computational point of view.). The practical importance of this scenario stems from its relevance to real-world situations, wherein the feasibility-seeking modelling is used and there are two disjoint constraints sets: one set, namely $A$,  represents constraints which must be satisfied (``hard'' constraints), while the other set (i.e., $B$) represents  constraints which, hopefully, will be satisfied (“soft” constraints). In this scenario the goal is to find a point which satisfies all the hard constraints and located as close as possible to the intersection set $B$ of the soft constraints. This goal leads to the problem of finding a BAP relative to these two sets: again, see, e.g., \cite{GoldburgMarks1985jour,YoulaVelasco1986jour,Combettes1994jour,CombettesBondon1999jour} and the references therein for applications in signal processing.  

We showed in \cite[Theorem 32]{CensorMansourReem2024jour} that the A-S-HLWB algorithm converges to a BAP whenever it is known in advance that there is a unique BAP. This naturally leads to the task of providing conditions, hopefully easy-to-verify, which ensure that there is a unique BAP. In \cite[Proposition 16(iii)]{CensorMansourReem2024jour} we presented a sufficient condition for the uniqueness of the BAP: that $A$ and $B$ are compact,  strictly convex (i.e., that their boundaries do not contain nondegenerate line segments) and satisfy $dist(A,B)>0$. While the strict convexity  condition covers a large class of cases, there are many cases in which there is a unique BAP but the above-mentioned condition does not hold, and a simple example was given in \cite[Figure 3.1]{CensorMansourReem2024jour}. 

Here we generalize \cite[Proposition 16(iii)]{CensorMansourReem2024jour} to all normed spaces, and present various other sufficient (and sometimes necessary) geometric conditions for the (at most) uniqueness of a BAP in a wide class of normed spaces. These conditions are related to the structure of the boundaries of the subsets, their relative orientation, and to the structure of the unit sphere of $X$. Roughly speaking, one of these conditions (Corollary \bref{cor:StrictlyConvexSets} below) says that if the unit sphere of $X$ does not contain nondegenerate  intervals (that is, if $(X,\|\cdot\|)$ is strictly convex), and if either $A$ or $B$ is strictly convex, or if the boundaries of both of them contain nondegenerate intervals but no interval from the boundary of one subset is parallel to an interval contained in the boundary of the other subset, then there is at most one BAP. Our analysis, which is illustrated by various examples and counterexamples, also covers the case where both $A$ and $B$ are finite intersections of closed and convex subsets. As can be seen from this discussion, our results significantly extend the horizon of the A-S-HLWB algorithm. 

In connection with the previous paragraph, we note that to the best of our knowledge, the issue of the uniqueness of the BAP has been discussed so far in a limited manner: in \cite[Theorem 1.1]{Stiles1965b-jour} (a general normed space, but the proof suffers from issues: see Remark \bref{rem:StilesPai}\beqref{item:Stiles} below), \cite[Theorem 3.1]{Pai1974jour} (a strictly convex normed space: see Remark \bref{rem:StrictConvexNormedSpace}\beqref{item:A-A_B-B}  below), \cite[Theorems 6.1 and 6.2]{Pai1974jour} (a general normed space, but the conditions are hard to verify: see Remark 
\bref{rem:StilesPai}\beqref{item:PaiUniqueness} below), \cite{Xu1983jour} (a strictly convex normed space: see the discussion before Theorem \bref{thm:UniquenessExistence} below), \cite[Section 4]{Luo2014jour} (a strictly convex normed space: see Remark \bref{rem:StilesPai}\beqref{item:Luo} below), \cite[Proof of Theorem 1]{AharoniCensorJiang2018jour} and \cite[Proposition 16 and Theorem 32]{CensorMansourReem2024jour} (a  Euclidean space in both cases).

The issue of existence is considered in Theorem \bref{thm:Existence} below, which presents many sufficient conditions for the existence of a BAP in various normed spaces and with possibly nonconvex subsets. This theorem generalizes most of the published results and adds many more new ones. See this theorem and Remark \bref{rem:Existence} below for more details regarding the known results (that we are aware of), for extensions and for counterexamples. The existence of a BAP is important also because without it various algorithms aimed at solving the BAP problem, such as the Dykstra algorithm \cite{BauschkeBorwein1994jour}, the alternating  projection algorithm \cite[Theorem 4]{CheneyGoldstein1959jour},\cite[Theorems 1.2--1.4]{Stiles1965b-jour} (inspired by von Neumann  \cite[Theorem 13.7, pp. 55--56]{vonNeumann1950book}), and the A-S-HLWB algorithm \cite[Theorem 32]{CensorMansourReem2024jour}, may not converge. 

We finish this introduction by mentioning a few related, but somewhat different, domains of research. The first deals with the  feasibility problem in which $A\cap B\neq\emptyset$ (in the special case where $A\cap B$ is convex this is the convex feasibility problem, or CFP for short), in contrast with the BAP problem which is mainly concerned with the case where $A\cap B=\emptyset$ (namely the inconsistent feasibility problem): see, for example, the following (very partial) list of references, as well as the references therein: \cite{BauschkeBorwein1996jour,
ButnariuCensorGurfilHadar2008jour,
ByrneCensor2001jour,
Cegielski2012book,CensorCegielski2015jour,
CensorZenios1997book,CensorReem2015jour,
Combettes1996jour(CFP),
GholamiTetruashviliStromCensor2013jour,GubinPolyakRaik1967jour}. The second domain of research deals with the so-called ``best  proximity pair/points theorems''; here one starts with some space $X$, subsets $A$ and $B$ of $X$, a mapping $T$ defined on $X$ (possibly multivalued, possibly with a non-full domain of definition), and one is interested in finding conditions on $A$, $B$, $X$ and $T$ which ensure the existence of some $x\in A$ such that $dist(x,Tx)=dist(A,B)$, or variations of this equation (our existence results might enlarge the pool of such sufficient conditions): for a very partial list of related works, see \cite{DigarKosuru2020jour,KirkReichVeeramani2003jour,
SadiqBashaVeeramaniPai2001jour,{SadiqBashaVeeramani200jour},RajuKosuruVeeramani2010jour,
SuzukiKikkawaVetro2009jour,Sankar-RajAnthony-Eldred2014jour,PatelPatel2023jour} and the references therein. The third domain of research deals with the issue  of genericity related to the BAP problem, say how much the set of all subsets $A$ and $B$ for which the BAP  problem has at least one  solution, a unique solution and so on is large in some well defined sense: see, for instance, \cite{De-BlasiMyjakPapini1992jour,Li2000jour,
LiXu2003jour,Peng2012jour,BoulosReich2015jour} and the references therein. The fourth domain of research is fixed point theory, since, as has already been observed by Cheney and Goldstein \cite[Theorem 2]{CheneyGoldstein1959jour} in the setting where $A$ and $B$ are nonempty, closed and convex subsets in a real Hilbert space (and where $P_A$ denotes the orthogonal projection operator onto $A$ and $P_B$ denotes the orthogonal projection operator onto $B$), if $(a,b)\in A\times B$ is a BAP relative to $(A,B)$, then $a=P_Ab$, $b=P_Ba$, $a$ is a fixed point of $P_AP_B$ and $b$ is a fixed point of $P_BP_A$;  conversely, if $a=P_Ab$ and $b=P_Ba$ (and then $a$ is a fixed point of $P_AP_B$ and $b$ is a fixed point of $P_BP_A$), then $(a,b)\in A\times B$ is a BAP relative to $(A,B)$. See \cite{BauschkeBorweinLewis1997inproc,CheneyGoldstein1959jour,GubinPolyakRaik1967jour,Zaslavski2022jour}
 and the references therein for more details and for possibly more general results. 
 
\subsection{Paper layout}
Section \bref{sec:Preliminaries} presents our notation and recalls a few known concepts. Section \bref{sec:Auxiliary} presents various auxiliary results. Section \bref{sec:Uniqueness} presents several conditions which ensure the (at most) uniqueness of a BAP, and Section \bref{sec:Existence} presents many sufficient conditions for the existence of a BAP. Some of the results are illustrated by various examples, counterexamples and figures  presented in Section \bref{sec:Examples}.

\section{Preliminaries}\label{sec:Preliminaries}
In this section we present some terminology and recall several known concepts and results. Unless otherwise stated, our setting is a normed space $(X,\|\cdot\|)$, $X\neq \{0\}$, but since some of the notions below hold in a more general setting, such as metric spaces and vector spaces, we sometimes consider these settings too. We denote by $X^*$ the dual of $X$. Given a subset $A\subseteq X$, we denote by $\overline{A}$ its closure, by $\partial A$ its boundary, and by $\Int(A)$ its interior. Given another subset $B$ of $X$, the distance between $A$ and $B$ is defined by $dist(A,B):=\inf\{\|a-b\|\,|\, a\in A, b\in B\}$ (if either $A$ or $B$ is empty, then $dist(A,B):=\infty$). We say that $B$ is proximinal with respect to $A$ if for every $a\in A$ there exists $b\in B$ such that $d(a,B):=d(\{a\},B)=\|a-b\|$. We denote $A+B:=\{a+b\,|\,a\in A, b\in B\}$ and $A-B:=\{a-b\,|\,a\in A, b\in B\}$. If $A$ is a linear subspace, then we say that $A$ is topologically complemented if $A$ is closed and there exists a closed linear subspace $F$ such that $A\oplus F=X$, that is, $A\cap F=\{0\}$ and $A+F=X$. In this case we denote by $\Pi_A:X\to A$ the linear projection onto $A$ along $F$, that is, if $z\in X$ is (uniquely) represented as $z=z_1+z_1$ for some $z_1\in A$ and $z_2\in F$, then $\Pi_A(z)=z_1$. Similarly, $\Pi_F:X\to F$ denotes the linear projection onto $F$ along $A$. If $F$ is finite dimensional, then we say that $A$ has a finite codimension. We say that $A$ is an affine subspace if $A=u+\wt{A}$ for some $u\in X$ and a linear subspace $\wt{A}$; in this case we say that $\wt{A}$ is  the linear part of $A$, and the dimension/codimension of $A$ is defined to be the dimension/codimension of $\wt{A}$. We say that $A$ is polyhedral, or a polytope, if it is the intersection of finitely many closed real halfspaces, namely there are $m\in\N$, $f_i\in X^*$ and $\beta_i\in\R$ for all $i\in \{1,\ldots,m\}$ such that  $A=\cap_{i=1}^m\{x\in X\,|\, \Rep f_i(x)\leq \beta_i\}$.

Given two points $a_0$ and $a_1$ in $X$, the interval, or line segment, connecting them is denoted by $[a_0,a_1]:=\{a(t):=a_0+t(a_1-a_0)\,|\, t\in [0,1]\}$. This interval is said to be nondegenerate if $a_0\neq a_1$. We denote by $(a_0,a_1):=\{a(t)\,|\, t\in (0,1)\}$ the open interval connecting $a_0$ and $a_1$, and by $[a_0,a_1):=\{a(t) \,|\,  t\in [0,1)\}$ and $(a_0,a_1]:=\{a(t) \,|\, t\in (0,1]\}$ the respective half-open intervals. We say that $A\subseteq X$ is strictly convex if for all distinct points $a_0,a_1\in A$, the open interval $(a_0,a_1)$ is contained in $\Int(A)$. Any strictly convex set is obviously convex. A real line in $X$ is an affine set whose linear part is a one-dimensional real linear subspace. Given two real lines $L$ and $M$ in $X$, we say that they are strictly parallel if they are disjoint and there is a real two-dimensional affine subspace in which they are located; we say that $L$ and $M$ are parallel if either they coincide or they are strictly parallel. Given two nondegenerate intervals $[a_0,a_1]$ and $[b_0,b_1]$, we say that they are parallel (respectively, strictly parallel) if there are two parallel (resp., strictly parallel) real lines $L$ and $M$, such that $[a_0,a_1]\subset L$ and $[b_0,b_1]\subset M$. 
 
We say that $A$ is a generalized hypercylinder with a base $C$ and an axis $L$ if $A=L+(C-c)$, where $C$ is a subset of $X$ and $L$ is a real line such that $L\cap C=\{c\}$ for some $c\in X$. In the special case where $X$ is a real Hilbert space, $L$ is a real line passing via some point $c\in X$, and $C$ is a ball with respect to $L^{\perp}$ whose radius is $r>0$ and its center is $c$, then $A$ is a right (standard) hypercylinder; in the previous setting, if the angle between $L$ and $C$ is not $\pi/2$ but rather is strictly between 0 and $\pi/2$, then $A$ is a tilted hypercylinder. See Figures \bref{fig:ParallelStraightCylinders}--\bref{fig:GeneralizedCylindersNonParallel} for a few examples in $\R^3$ (Euclidean and non-Euclidean norms).

We say that $((a_k,b_k))_{k\in\N}$ is a distance minimizing sequence in $A\times B$ if  
\begin{equation}\label{x_ky_xdist(A,B)}
\lim_{k\to\infty}\|a_k-b_k\|=dist(A,B).
\end{equation}
The definition of $dist(A,B)$ obviously implies the  existence of at least one (not necessarily convergent) distance minimizing sequence when $A\neq\emptyset$ and $B\neq\emptyset$. We say that $(A,B)$ satisfies the distance coercivity condition if $A\cup B$ is unbounded and
\begin{equation}\label{eq:coercive}
\lim_{\|(x,y)\|\to\infty, (x,y)\in A\times B}\|x-y\|=\infty, 
\end{equation}
where $\|(x,y)\|:=\sqrt{\|x\|^2+\|y\|^2}$ for all $(x,y)\in X^2$. 

If a sequence $(x_k)_{k\in \N}$ converges weakly to $x\in X$, then we write $x=(w)\lim_{k\to\infty}x_k$. If $X$ is a dual of a Banach space $Y$ and $(x_k)_{k\in \N}$ converges to $x\in X$ in the weak-star topology, then we write $x=(w^*)\lim_{k\to\infty}x_k$. We say that $A$ is weakly sequentially closed if for every $x\in X$ and  $(x_k)_{k\in \N}$ in $A$, the condition $x=(w)\lim_{k\to\infty}x_k$ implies that $x\in A$. Note that if $A$ is weakly closed, then it is weakly sequentially closed (since in general, if a subset is closed with respect to some topology, then it is sequentially closed with respect to that topology), but the converse is not necessarily true even in Hilbert spaces \cite[Example 3.33, p. 60]{BauschkeCombettes2017book}. We say that $A$ is weakly sequentially compact if every sequence in $A$ has a subsequence which converges weakly to some $z\in A$ (this is a standard notion but occasionally, as in  \cite[Definition II.3.25, pp. 67--68]{DunfordSchwartz1958book}, one requires less and gets less, namely that the limit $z$ exists in $X$ and not necessarily in $A$). We say that $A$ is locally compact if for every $z\in A$ there is a (non-degenerate) closed ball $D$ centered at $z$ such that $D\cap A$ is compact. We say that $A$ is boundedly compact if $D\cap A$ is compact for every closed ball $D$.

Finally, we say that the underlying space $X$ is strictly convex if its unit ball (and hence any other ball) is a strictly convex subset. Equivalently, the boundary of the ball does not contain nondegenerate intervals.  Well-known examples of strictly convex spaces are Euclidean spaces, Hilbert spaces, the sequence spaces $\ell_p$ (sequences with possibly finitely many components) where $p\in (1,\infty)$, the function spaces $L_p(\Omega)$ where $p\in (1,\infty)$ and $\Omega$ is a Lebesgue measurable set in $\R^k$ for some $k\in\N$, uniformly convex spaces, and sums of strictly convex spaces with the $\|\cdot\|_p$ norm ($p\in (1,\infty)$). Well-known examples of spaces which are not strictly convex are the $\ell_1$, $\ell_{\infty}$,  $L_1(\Omega)$ and $L_{\infty}(\Omega)$ spaces. For more details and examples, see, for instance,  \cite{Beauzamy1982book,DunfordSchwartz1958book,Prus2001incol,GoebelReich1984book,LindenstraussTzafriri1979book,Clarkson1936jour}.

\section{Auxiliary results}\label{sec:Auxiliary}
In this section we formulate and prove several auxiliary  results which we need in order to prove our main results. We start with the following two simple (and probably known) lemmata whose proofs are presented for the sake of completeness. 
 
\begin{lem}\label{lem:DistClosures}
Given two nonempty subsets $A$ and $B$ in a metric space $(X,d)$, one has $dist(A,B)=dist(\ol{A},\ol{B})$, where $dist(A,B):=\inf\{d(a,b)\,|\, (a,b)\in A\times B\}$.  Moreover, if $A\cap B=\emptyset$ and there exists a BAP with respect to $(A,B)$, namely a pair $(a_0,b_0)\in A\times B$ such that $d(a_0,b_0)=dist(A,B)$, then $dist(A,B)>0$.
\end{lem}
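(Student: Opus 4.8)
The statement has two parts, and the plan is to prove each separately by exploiting only the definition of $dist$ as an infimum together with the density of a set in its closure.

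For the first part, $dist(A,B)=dist(\ol{A},\ol{B})$, I would argue by two inequalities. Since $A\subseteq \ol{A}$ and $B\subseteq \ol{B}$, the infimum defining $dist(\ol{A},\ol{B})$ is taken over a larger collection of pairs, so $dist(\ol{A},\ol{B})\leq dist(A,B)$ is immediate. For the reverse inequality, fix an arbitrary $\epsilon>0$ and pick $(\ol{a},\ol{b})\in \ol{A}\times\ol{B}$ with $d(\ol{a},\ol{b})<dist(\ol{A},\ol{B})+\epsilon$. By density of $A$ in $\ol{A}$ and of $B$ in $\ol{B}$, choose $a\in A$ and $b\in B$ with $d(a,\ol{a})<\epsilon$ and $d(b,\ol{b})<\epsilon$. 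The triangle inequality then gives $d(a,b)\leq d(a,\ol{a})+d(\ol{a},\ol{b})+d(\ol{b},b)<dist(\ol{A},\ol{B})+3\epsilon$, whence $dist(A,B)\leq dist(\ol{A},\ol{B})+3\epsilon$. Letting $\epsilon\to 0$ yields $dist(A,B)\leq dist(\ol{A},\ol{B})$, and combining the two inequalities completes the first part. None of this requires any structure beyond the metric, which matches the generality in which the lemma is stated.

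For the second part, I would argue by contradiction. Suppose $A\cap B=\emptyset$, that a BAP $(a_0,b_0)\in A\times B$ exists, and yet $dist(A,B)=0$. Then $d(a_0,b_0)=dist(A,B)=0$, so $a_0=b_0$ by the positivity axiom of the metric. But then $a_0=b_0\in A\cap B$, contradicting the assumption that $A\cap B=\emptyset$. Hence $dist(A,B)>0$. The only subtlety is that $dist(A,B)$ is always nonnegative as an infimum of nonnegative quantities, so ruling out the value $0$ suffices.

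I expect no genuine obstacle here: both parts are elementary consequences of the metric axioms and the infimum definition. If anything, the mildly delicate step is the density argument in the first part, where one must take care that the approximating points $a$ and $b$ are drawn from $A$ and $B$ respectively (not from the closures), so that the resulting bound genuinely controls $dist(A,B)$ rather than $dist(\ol{A},\ol{B})$ again; but this is handled cleanly by the definition of closure as the set of limits of sequences (or of points every neighborhood of which meets the set).
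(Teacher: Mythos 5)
Your proposal is correct and follows essentially the same route as the paper: the first part via the two inequalities (monotonicity of the infimum for one direction, an $\epsilon$-approximation with the triangle inequality for the other), and the second part by noting that $d(a_0,b_0)=0$ would force $a_0=b_0\in A\cap B$. The only cosmetic difference is that the paper calibrates the tolerances ($0.25\epsilon$, $0.5\epsilon$) to land exactly on $dist(\ol{A},\ol{B})+\epsilon$, whereas you accept the harmless bound $dist(\ol{A},\ol{B})+3\epsilon$.
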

\begin{proof}
Since $A\times B\subseteq \ol{A}\times\ol{B}$, we have $dist(\ol{A},\ol{B})=\inf\{d(a,b)\,|\, (a,b)\in \ol{A}\times\ol{B}\}\leq \inf\{d(a,b)\,|\, (a,b)\in A\times B\}=dist(A,B)$. For the converse direction, let $\epsilon>0$ be arbitrary. By the definitions of $dist(\ol{A},\ol{B})$, $\ol{A}$ and $\ol{B}$ there are $\wt{a}\in\ol{A}$ and $\wt{b}\in \ol{B}$ such that $d(\wt{a},\wt{b})<dist(\ol{A},\ol{B})+0.5\epsilon$ and there are some $a\in A$ and $b\in B$ such that $d(\wt{a},a)<0.25\epsilon$ and $d(\wt{b},b)<0.25\epsilon$. Thus, by the triangle inequality and because $dist(A,B)\leq d(a,b)$, we have $dist(A,B)\leq d(a,b)\leq d(a,\wt{a})+d(\wt{a},\wt{b})+d(\wt{b},b)<0.25\epsilon+dist(\ol{A},\ol{B})+0.5\epsilon+0.25\epsilon=dist(\ol{A},\ol{B})+\epsilon$. This inequality is true for all $\epsilon>0$ and hence $dist(A,B)\leq dist(\ol{A},\ol{B})$. The final assertion is clear because if $(a_0,b_0)\in A\times B$ and $a_0=b_0$, then $A\cap B\neq\emptyset$, a contradiction. Thus $a_0\neq b_0$ and $dist(A,B)=d(a_0,b_0)>0$, as required. 
\end{proof}

\begin{lem}\label{lem:DistBoundary}
Suppose that $(X,\|\cdot\|)$ is a normed space and that $A$ and $B$ are nonempty and disjoint subsets of $X$. Then $dist(A,B)=dist(\partial A,\partial B)$. 
\end{lem}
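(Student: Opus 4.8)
The plan is to prove the two inequalities $dist(A,B)\le dist(\partial A,\partial B)$ and $dist(\partial A,\partial B)\le dist(A,B)$ separately. The first inequality is immediate: since $\partial A\subseteq \ol{A}$ and $\partial B\subseteq \ol{B}$, the infimum defining $dist(\partial A,\partial B)$ is taken over a subset of $\ol{A}\times\ol{B}$, whence $dist(\ol{A},\ol{B})\le dist(\partial A,\partial B)$; combining this with Lemma \ref{lem:DistClosures}, which gives $dist(A,B)=dist(\ol{A},\ol{B})$, yields $dist(A,B)\le dist(\partial A,\partial B)$.

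For the reverse inequality it suffices to show that for every $a\in A$ and every $b\in B$ there are points $a'\in\partial A$ and $b'\in\partial B$ with $\|a'-b'\|\le\|a-b\|$; taking the infimum over all such $a$ and $b$ then gives $dist(\partial A,\partial B)\le dist(A,B)$. I would obtain these boundary points by travelling along the segment $x(t):=a+t(b-a)$, $t\in[0,1]$, which connects $a=x(0)\in A$ to $b=x(1)\in B$. Set $t^*:=\inf\{t\in[0,1]\,|\,x(t)\notin A\}$ and $s^*:=\sup\{t\in[0,1]\,|\,x(t)\notin B\}$; both are well defined because $b\notin A$ and $a\notin B$ (as $A\cap B=\emptyset$), so the two defining sets are nonempty. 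I would then put $a':=x(t^*)$ and $b':=x(s^*)$.

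The key step is to verify that $a'\in\partial A$ and $b'\in\partial B$. For $a'$: by the definition of the infimum, $x(t)\in A$ for all $t\in[0,t^*)$, and letting $t\uparrow t^*$ and using the continuity of $t\mapsto x(t)$ gives $a'\in\ol{A}$; on the other hand there are parameters $t\ge t^*$ arbitrarily close to $t^*$ with $x(t)\notin A$ (either from above, if $t^*<1$, or the endpoint itself, if $t^*=1$), so $a'\notin\Int(A)$, and hence $a'\in\ol{A}\setminus\Int(A)=\partial A$. A symmetric argument, exchanging the roles of $\inf$ and $\sup$ and of $A$ and $B$, shows $b'\in\partial B$. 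This topological bookkeeping -- making sure the extremal parameter lands on the boundary in all the boundary cases $t^*\in\{0,1\}$ and $s^*\in\{0,1\}$ -- is where the only real care is needed.

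Finally, the distance comparison is immediate precisely because the segment is parametrized affinely: $\|a'-b'\|=\|x(t^*)-x(s^*)\|=|t^*-s^*|\,\|b-a\|\le\|b-a\|$, the last inequality holding since $t^*,s^*\in[0,1]$ forces $|t^*-s^*|\le 1$. This establishes the reverse inequality and completes the proof. I expect the main obstacle to be purely the boundary-case analysis in identifying $a'$ and $b'$ as boundary points; once the straight segment is used, the quantitative estimate costs nothing.
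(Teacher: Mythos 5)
Your proposal is correct and follows essentially the same route as the paper: both arguments travel along the affine segment from $a\in A$ to $b\in B$, locate points of $\partial A$ and $\partial B$ on it, and then use $\|x(t)-x(s)\|=|t-s|\,\|a-b\|\le\|a-b\|$. The only (cosmetic) differences are that you produce the crossing points explicitly via $\inf$/$\sup$ of the exit sets where the paper uses a connectedness argument on $[0,1]$, and you take the infimum over all pairs $(a,b)$ at the end where the paper works with an $\epsilon$-almost-minimizing pair; just make sure to note that in the edge case $t^*=0$ one has $a'=a\in A\subseteq\ol{A}$ directly rather than by a limiting argument.
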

\begin{proof}
Let $\epsilon>0$ be arbitrary. By the definition of $dist(A,B)$ there are $a\in A$ and $b\in B$ which satisfy $\|a-b\|<dist(A,B)+\epsilon$. Let $g:[0,1]\to X$ be defined by $g(t):=a+t(b-a)$ for every $t\in [0,1]$. Since $g(0)=a\in A$ and $g(1)=b\in B$, the subsets $\{t\in [0,1]\,|\, g(t)\in A\}$ and $\{t\in [0,1]\,|\, g(t)\in B\}$ are  nonempty. Let $t_1:=\sup\{t\in [0,1]\,|\, g(t)\in A\}$. Since $g$ is continuous, for all $\eta>0$ there is $\delta>0$ such that $\|g(t)-g(t_1)\|<\eta$ whenever $|t-t_1|<\delta$ and $t\in [0,1]$. Since $t_1-\delta<t_1$, the definition of $t_1$ implies that there is $s\in \{t\in [0,1]\,|\, g(t)\in A\}$ such that $t_1-\delta<s\leq t_1$. This $s$ satisfies $\|g(s)-g(t_1)\|<\eta$ by the choice of $\delta$. Since $\eta$ was an arbitrary positive number and since $g(s)\in A$, it follows that in any neighborhood of $g(t_1)$ there are points from $A$. Now, if $t_1=1$, then $g(t_1)=b\notin A$ because $A\cap B=\emptyset$, and hence obviously in any neighborhood of $g(t_1)$ there are points outside $A$. Thus $g(t_1)\in\partial A$. Otherwise $t_1<1$, but then $g(t)\notin A$ whenever $t_1<t<1$ by the definition of $t_1$, and since $g(t_1)=\lim_{t\to t_1,t>t_1}g(t)$ by  the continuity of $g$, we see that also in this case in any neighborhood of $g(t_1)$ there are points outside $A$. Hence $g(t_1)\in\partial A$. Similarly, $t_2:=\inf\{t\in [0,1]\,|\, g(t)\in B\}$ satisfies $g(t_2)\in \partial B$. In particular $\partial A\neq\emptyset$ and $\partial B\neq\emptyset$. Thus $dist(\partial A,\partial B)\leq \|g(t_1)-g(t_2)\|=|t_1-t_2|\|a-b\|\leq \|a-b\|< dist(A,B)+\epsilon$. Since $\epsilon$ was an  arbitrary positive number, we conclude that $dist(\partial A,\partial B)\leq dist(A,B)$. Conversely, since $\partial A\subseteq \ol{A}$ and $\partial B\subseteq \ol{B}$, we have $dist(\ol{A},\ol{B})\leq dist(\partial A,\partial B)$, and therefore Lemma \bref{lem:DistClosures} implies that  $dist(A,B)=dist(\ol{A},\ol{B})\leq dist(\partial A,\partial B)$. 
\end{proof}

\begin{remark}\label{rem:DistBoundaryMetric}
Lemma \bref{lem:DistBoundary} does not hold in every path connected metric space. Indeed, suppose that $X$ is the subset of $\R^2$ defined by $([-2,2]\times\{-2\})\cup (\{2\}\times [-2,2])\cup ([-2,2]\times \{2\})\cup (\{-2\}\times [1,2])\cup(\{-2\}\times[-2,-1])$, that is, $X$ is the subset obtained by removing the line segment $\{-2\}\times (-1,1)$ from the boundary (in $\R^2$) of the square $[-2,2]^2$. Let $d:X^2\to [0,\infty)$ be the metric induced by the Euclidean norm, namely $d((x_1,x_2),(y_1,y_2)):=\sqrt{(x_1-y_1)^2+(x_2-y_2)^2}$ for all $(x_1,x_2),(y_1,y_2)\in X$. Let $A:=\{-2\}\times [-2,-1]$ and $B:=\{-2\}\times [1,2]$. Then $\partial A=\{(-2,-2)\}$, $\partial B=\{(-2,2)\}$, $A\cap B=\emptyset$ and  $2=dist(A,B)<dist(\partial A,\partial B)=4$. 
\end{remark}

We continue with the following definition. See Section \bref{sec:Examples} below for illustrations. 

\begin{defin}\label{def:BestApproxPairIntervals}
Let $(X,\|\cdot\|)$  be a normed space, let $a_0, a_1, b_0, b_1\in X$ and let $A$ and $B$ be two nonempty subsets of $X$. We say that:
\begin{enumerate}[(i)]
\item $(a_0,b_0)$ is a best approximation pair (BAP) with respect to (or relative to) $(A,B)$ if $a_0\in A$, $b_0\in B$ and $\|a_0-b_0\|=dist(A,B)$. 
\item  $([a_0,a_1],[b_0,b_1])$ is a BAP of intervals with respect to $(A,B)$ if for all $t\in [0,1]$ one has $a(t)\in A$, $b(t)\in B$ and $\|a(t)-b(t)\|=dist(A,B)$, where $a(t):=a_0+t(a_1-a_0)$ and $b(t):=b_0+t(b_1-b_0)$. 

\item  $([a_0,a_1],[b_0,b_1])$ is a nondegenerate BAP of intervals with respect to $(A,B)$ if it is a BAP of intervals with respect to  $(A,B)$ and either $a_0\neq a_1$ or $b_0\neq b_1$. 

\item $([a_0,a_1],[b_0,b_1])$ is a strictly nondegenerate BAP of intervals with respect to  $(A,B)$ if it is a BAP of intervals relative to $(A,B)$ and both $a_0\neq a_1$ and  $b_0\neq b_1$. 
\end{enumerate}
\end{defin}

\begin{lem}\label{lem:a(t)b(t)} 
Suppose that $(X,\|\cdot\|)$ is a normed space and that $A$ and $B$ are nonempty and convex subsets of $X$ such that  $a_0, a_1\in A$ and $b_0, b_1\in B$.  Then the following two conditions are equivalent:
\begin{enumerate}[(i)]
\item\label{item:0,1} $(a_0,b_0)$ and $(a_1,b_1)$ are BAPs with respect to $(A,B)$. 

\item\label{item:[0,1]} $(a(t),b(t))$ is a BAP with respect to $(A,B)$ for all $t\in [0,1]$, where $a(t):=a_0+t(a_1-a_0)$ and $b(t):=b_0+t(b_1-b_0)$. 
\end{enumerate}
\end{lem}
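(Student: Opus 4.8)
The plan is to prove the two implications separately, with the reverse implication \beqref{item:[0,1]} $\Rightarrow$ \beqref{item:0,1} being immediate and the forward implication \beqref{item:0,1} $\Rightarrow$ \beqref{item:[0,1]} carrying all the content. For the reverse direction I would simply specialize the hypothesis to the endpoints $t=0$ and $t=1$: since $a(0)=a_0$, $b(0)=b_0$, $a(1)=a_1$ and $b(1)=b_1$, the assertion that $(a(t),b(t))$ is a BAP for every $t\in[0,1]$ gives in particular that $(a_0,b_0)$ and $(a_1,b_1)$ are BAPs with respect to $(A,B)$.

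For the forward direction, fix $t\in[0,1]$ and write $d:=dist(A,B)$. First I would verify membership: since $a(t)=(1-t)a_0+ta_1$ is a convex combination of the points $a_0,a_1\in A$ and $A$ is convex, we have $a(t)\in A$; symmetrically $b(t)\in B$. Consequently $\|a(t)-b(t)\|\geq d$ directly from the definition of $dist(A,B)$ as an infimum over $A\times B$. The reverse estimate is the key point: writing $a(t)-b(t)=(1-t)(a_0-b_0)+t(a_1-b_1)$ and applying the triangle inequality together with the hypothesis $\|a_0-b_0\|=\|a_1-b_1\|=d$ coming from \beqref{item:0,1}, I would obtain $\|a(t)-b(t)\|\leq (1-t)\|a_0-b_0\|+t\|a_1-b_1\|=(1-t)d+td=d$. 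Combining the two inequalities forces $\|a(t)-b(t)\|=d=dist(A,B)$, so $(a(t),b(t))$ is a BAP, which is exactly \beqref{item:[0,1]}.

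I do not anticipate a genuine obstacle here: the argument rests only on convexity of $A$ and $B$ (to keep the interpolated points inside the respective sets) and on the convexity of the norm via the triangle inequality (to bound the distance from above). The one point worth flagging is that convexity of the sets is genuinely used, since without it the interpolated points $a(t)$ and $b(t)$ need not lie in $A$ and $B$, and then the lower bound $\|a(t)-b(t)\|\geq d$ would no longer be available.
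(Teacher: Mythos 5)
Your proposal is correct and follows essentially the same route as the paper's proof: convexity of $A$ and $B$ yields $a(t)\in A$, $b(t)\in B$ and hence the lower bound $\|a(t)-b(t)\|\geq dist(A,B)$, while the triangle inequality applied to $a(t)-b(t)=(1-t)(a_0-b_0)+t(a_1-b_1)$ gives the matching upper bound. The treatment of the easy implication via the endpoints $t=0$ and $t=1$ also matches the paper.
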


\begin{proof}
Condition \beqref{item:[0,1]} obviously implies Condition \beqref{item:0,1}. Conversely, assume that Condition \beqref{item:0,1} holds. Since both $(a_0,b_0)$ and $(a_1,b_1)$ are BAPs with respect to $(A,B)$, we have $\|a_0-b_0\|=dist(A,B)=\|a_1-b_1\|$. By the convexity of $A$ and $B$ and the fact that $a(t)=(1-t)a_0+ta_1$ and $b(t)=(1-t)b_0+tb_1$, we have $a(t)\in A$ and $b(t)\in B$ for each $t\in [0,1]$. Thus $dist(A,B)\leq\|a(t)-b(t)\|$. On the other hand, the triangle inequality and Condition \beqref{item:0,1} imply that $\|a(t)-b(t)\|\leq (1-t)\|a_0-b_0\|+t\|a_1-b_1\|=(1-t)dist(A,B)+tdist(A,B)=dist(A,B)$ for all $t\in [0,1]$. Hence $\|a(t)-b(t)\|=dist(A,B)$ for all $t\in [0,1]$, namely Condition \beqref{item:[0,1]} holds.
\end{proof}

The next lemma is related to, but definitely different from, \cite[Proposition 3.1]{SadiqBashaVeeramani200jour} (which, by the way, has a minor gap in its proof, where it is claimed without a proof that the line segment $K$ defined there must intersect $\partial A$; in this connection, see the proof of Lemma \bref{lem:DistBoundary} above).

\begin{lem}\label{lem:a_0b_0Boundary}
Suppose that $(X,\|\cdot\|)$ is a normed space and that $A$ and $B$ are nonempty and disjoint subsets of $X$. If $(a_0,b_0)$ is a BAP with respect to $(A,B)$, then it is a BAP with respect to $(\partial A,\partial B)$; in particular,  $a_0\in \partial A$ and $b_0\in \partial B$. Conversely, if $(a_0,b_0)$ is a BAP with respect to $(\partial A,\partial B)$ and $A$ and $B$ are also closed (in addition to being  nonempty and disjoint), then $(a_0,b_0)$ is a BAP with respect to $(A,B)$. 
\end{lem}

\begin{proof}
Suppose first that $(a_0,b_0)$ is a BAP with respect to $(A,B)$. Then $\|a_0-b_0\|=dist(A,B)$, and because of Lemma \bref{lem:DistBoundary}, we have $dist(A,B)=dist(\partial A,\partial B)$, namely,  $\|a_0-b_0\|=dist(\partial A,\partial B)$. Therefore, in order to prove that  $(a_0,b_0)$ is a BAP with respect to $(\partial A,\partial B)$ it remains to show that $(a_0,b_0)\in \partial A\times\partial B$. Let $r>0$ be arbitrary and consider the open ball  with radius $r$ around $a_0$. Obviously $a_0\in A$ is in the ball. In addition, let $t\in (0,\min\{r/(\|b_0-a_0\|+1),1\})$. A simple calculation shows that $a(t):=a_0+t(b_0-a_0)$ is both in  $(a_0,b_0]$ and in the ball, and since $\|a(t)-b_0\|=(1-t)\|a_0-b_0\|<\|a_0-b_0\|=dist(A,B)$, it follows that $a(t)\notin A$  because the distance between a point from $A$ and a point from $B$ is at least $dist(A,B)$. Since $r>0$ can be arbitrarily small, we have $a_0\in\partial A$. Similarly, $b_0\in\partial B$. Conversely, if $A$ and $B$ are closed and that $(a_0,b_0)$ is a BAP with respect to $(\partial A,\partial B)$, then $a_0\in\partial A\subseteq A$,  $b_0\in\partial B\subseteq B$ and $\|a_0-b_0\|=dist(\partial A,\partial B)=dist(A,B)$, where the last equality is by Lemma \bref{lem:DistBoundary}. Hence $(a_0,b_0)$ is a BAP with respect to $(A,B)$.
\end{proof}

\begin{lem}\label{lem:BestApproxIntervalBoundary} 
Let $A$ and $B$ be nonempty, convex and disjoint subsets of a normed space $(X,\|\cdot\|)$. Assume  that $a_0, a_1\in A$ and $b_0, b_1\in B$. If $(a_0,b_0)$ and $(a_1,b_1)$ are BAPs with respect to $(A,B)$, then $([a_0,a_1],[b_0,b_1])$ is a BAP of intervals with respect to both $(A,B)$ and $(\partial A,\partial B)$. In particular, $[a_0,a_1]\subseteq \partial A$ and $[b_0,b_1]\subseteq \partial B$.
\end{lem}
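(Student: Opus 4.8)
The plan is to reduce the statement almost entirely to the two auxiliary lemmas already established, namely Lemma \bref{lem:a(t)b(t)} and Lemma \bref{lem:a_0b_0Boundary}, and to apply the second one pointwise along the two intervals.

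First I would handle the claim for $(A,B)$. Since $A$ and $B$ are nonempty and convex with $a_0,a_1\in A$ and $b_0,b_1\in B$, and since $(a_0,b_0)$ and $(a_1,b_1)$ are BAPs with respect to $(A,B)$ by hypothesis, the equivalence in Lemma \bref{lem:a(t)b(t)} applies and yields that $(a(t),b(t))$ is a BAP with respect to $(A,B)$ for every $t\in[0,1]$, where $a(t):=a_0+t(a_1-a_0)$ and $b(t):=b_0+t(b_1-b_0)$. Unwinding the definition of a BAP, this says precisely that $a(t)\in A$, $b(t)\in B$ and $\|a(t)-b(t)\|=dist(A,B)$ for all $t\in[0,1]$, which is exactly the assertion that $([a_0,a_1],[b_0,b_1])$ is a BAP of intervals with respect to $(A,B)$ in the sense of Definition \bref{def:BestApproxPairIntervals}(ii).

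Next I would upgrade this to $(\partial A,\partial B)$ by applying Lemma \bref{lem:a_0b_0Boundary} to each individual pair $(a(t),b(t))$. For each fixed $t\in[0,1]$ we have just shown that $(a(t),b(t))$ is a BAP with respect to $(A,B)$; since $A$ and $B$ are nonempty and disjoint, Lemma \bref{lem:a_0b_0Boundary} gives that $(a(t),b(t))$ is a BAP with respect to $(\partial A,\partial B)$, and in particular $a(t)\in\partial A$ and $b(t)\in\partial B$. By the definition of a BAP this in turn means $\|a(t)-b(t)\|=dist(\partial A,\partial B)$ for every such $t$. Collecting these facts over all $t\in[0,1]$ shows that $([a_0,a_1],[b_0,b_1])$ is a BAP of intervals with respect to $(\partial A,\partial B)$ as well. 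Finally, the inclusions $[a_0,a_1]\subseteq\partial A$ and $[b_0,b_1]\subseteq\partial B$ are immediate, since $[a_0,a_1]=\{a(t)\,|\,t\in[0,1]\}$ and $[b_0,b_1]=\{b(t)\,|\,t\in[0,1]\}$ and we have established $a(t)\in\partial A$, $b(t)\in\partial B$ for all such $t$.

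There is essentially no genuine obstacle here: convexity of $A$ and $B$ is used only to invoke Lemma \bref{lem:a(t)b(t)} (which simultaneously guarantees that the whole segment consists of BAPs and that $a(t)\in A$, $b(t)\in B$), while disjointness is needed only to invoke Lemma \bref{lem:a_0b_0Boundary}. The one point to be careful about is to apply the boundary lemma \emph{pointwise} — once for every $t\in[0,1]$ — rather than only at the endpoints, so that the boundary membership $a(t)\in\partial A$, $b(t)\in\partial B$ is obtained along the entire segment and not merely at $t=0$ and $t=1$.
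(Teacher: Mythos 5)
Your proof is correct and follows essentially the same route as the paper's: Lemma \bref{lem:a(t)b(t)} gives that $(a(t),b(t))$ is a BAP relative to $(A,B)$ for every $t\in[0,1]$, and Lemma \bref{lem:a_0b_0Boundary} is then applied pointwise to each $(a(t),b(t))$ to obtain the boundary version and the inclusions $[a_0,a_1]\subseteq\partial A$, $[b_0,b_1]\subseteq\partial B$. Your closing remark about applying the boundary lemma for every $t$, not just at the endpoints, is exactly the point the paper's proof also makes by fixing an arbitrary $t$ at the outset.
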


\begin{proof}
Let $t\in [0,1]$ be arbitrary. Lemma \bref{lem:a(t)b(t)}  implies that $(a(t),b(t))$ is a BAP with respect to $(A,B)$, and so $\|a(t)-b(t)\|=dist(A,B)$. Because $(a(t),b(t))$ is a BAP with respect to $(A,B)$, Lemma \bref{lem:a_0b_0Boundary} (with $(a(t),b(t))$ instead of $(a_0,b_0)$) implies that $(a(t),b(t))$ is also a BAP with respect to $(\partial A,\partial B)$. Thus, $a(t)\in\partial A$, $b(t)\in\partial B$ and $\|a(t)-b(t)\|=dist(\partial A,\partial B)$. Since $[a_0,a_1]=\{a(t)\,|\, t\in [0,1]\}$ and $[b_0,b_1]=\{b(t)\,|\, t\in [0,1]\}$, we have $[a_0,a_1]\subseteq \partial A$ and $[b_0,b_1]\subseteq \partial B$. Thus, Definition \bref{def:BestApproxPairIntervals} implies that $([a_0,a_1],[b_0,b_1])$ is a BAP of intervals with respect to both $(A,B)$ and $(\partial A, \partial B)$. 
\end{proof}

\begin{lem}\label{lem:StrictlyNondegenerate}
Let $A$ and $B$ be nonempty, convex and disjoint subsets of a strictly convex normed space $(X,\|\cdot\|)$. If $([a_0,a_1],[b_0,b_1])$ is a nondegenerate BAP of intervals with respect to $(A,B)$, then it is a strictly nondegenerate  BAP of intervals relative to both $(A,B)$ and  $(\partial A, \partial B)$. In particular, $[a_0,a_1]$ is a nondegenerate interval contained in $\partial A$, and $[b_0,b_1]$ is a nondegenerate interval contained in $\partial B$.
\end{lem}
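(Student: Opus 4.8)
The plan is to exploit strict convexity of the space to show that a single nondegenerate interval forces the other to be nondegenerate as well, and then to read off the boundary statement from Lemma \bref{lem:BestApproxIntervalBoundary}. Throughout set $d := dist(A,B)$ and $a(t) := a_0 + t(a_1-a_0)$, $b(t) := b_0 + t(b_1-b_0)$. First I would record that $d>0$: evaluating the BAP-of-intervals hypothesis at $t=0$ shows that $(a_0,b_0)$ is a BAP relative to $(A,B)$, and since $A\cap B=\emptyset$ the ``moreover'' part of Lemma \bref{lem:DistClosures} yields $d>0$ (alternatively, $d=0$ would force $a(t)=b(t)\in A\cap B$, a contradiction).

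The core step is the implication ``$a_0\neq a_1 \Rightarrow b_0\neq b_1$'', the reverse following by symmetry. I would argue by contradiction: assume $a_0\neq a_1$ while $b_0=b_1=:b$, and set $u:=a_0-b$ and $v:=a_1-b$. Reading the hypothesis at $t=0$ and $t=1$ gives $\|u\|=\|v\|=d$, while reading it at $t=\tfrac12$ gives $\|\tfrac{u+v}{2}\|=\|a(\tfrac12)-b\|=d$ (using $b(\tfrac12)=b$). Dividing by $d>0$, the vectors $u/d$ and $v/d$ lie on the unit sphere and so does their midpoint $\tfrac{u/d+v/d}{2}$; strict convexity of the unit ball then forces $u/d=v/d$, hence $a_0=a_1$, contradicting $a_0\neq a_1$. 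Since the pair of intervals is nondegenerate, at least one of $a_0\neq a_1$, $b_0\neq b_1$ holds, so by this implication and its mirror image both hold, i.e. the pair is strictly nondegenerate relative to $(A,B)$.

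Finally I would pass to the boundaries. Evaluating the hypothesis at $t=0$ and $t=1$ shows that $(a_0,b_0)$ and $(a_1,b_1)$ are BAPs relative to $(A,B)$; since $A$ and $B$ are nonempty, convex and disjoint, Lemma \bref{lem:BestApproxIntervalBoundary} then gives that $([a_0,a_1],[b_0,b_1])$ is a BAP of intervals relative to $(\partial A,\partial B)$ with $[a_0,a_1]\subseteq\partial A$ and $[b_0,b_1]\subseteq\partial B$, and combining this with the strict nondegeneracy established above yields the ``in particular'' clause. The only genuinely delicate point I anticipate is the midpoint computation in the core step: one must invoke strict convexity in its correctly normalized form, namely that distinct points on the unit sphere have a midpoint of norm strictly below $1$. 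Everything else is bookkeeping resting on the earlier lemmata.
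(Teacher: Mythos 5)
Your proposal is correct and follows essentially the same route as the paper's proof: both reduce the boundary statement to Lemma \bref{lem:BestApproxIntervalBoundary}, and both obtain strict nondegeneracy by assuming $a_0\neq a_1$ with $b_0=b_1$ and using strict convexity of the ball of radius $dist(A,B)>0$ about $b_0$ to force the midpoint $a(\tfrac12)$ strictly inside that ball. The only cosmetic difference is the source of the contradiction: you contradict the BAP-of-intervals identity at $t=\tfrac12$ directly, while the paper contradicts the inequality $\|a_{0.5}-b_0\|\geq dist(A,B)$ coming from $a_{0.5}\in A$, $b_0\in B$; these are interchangeable.
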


\begin{proof}
Since $([a_0,a_1],[b_0,b_1])$ is a BAP of intervals with respect to $(A,B)$, we have, in particular, that $(a_0,b_0)$ and $(a_1,b_1)$ are BAPs with respect to $(A,B)$. Hence Lemma \bref{lem:BestApproxIntervalBoundary} implies that  $([a_0,a_1],[b_0,b_1])$ is a BAP of intervals with respect to $(\partial A, \partial B)$. In particular, $[a_0,a_1]\subseteq \partial A$ and $[b_0,b_1]\subseteq \partial B$.

It remains to be shown that the pair $([a_0,a_1],[b_0,b_1])$ is strictly nondegenerate. Since this pair is a nondegenerate BAP of intervals with respect to $(A,B)$, we have that either $a_0\neq a_1$ or $b_0\neq b_1$. Assume that $a_0\neq a_1$: the case $b_0\neq b_1$ can be treated similarly. If, for the sake of contradiction, $b_0=b_1$, then this  equality and the fact that  $(a_0,b_0)$ and $(a_1,b_1)$ are BAPs with respect to $(A,B)$ imply that $\|a_0-b_0\|=dist(A,B)=\|a_1-b_1\|=\|a_1-b_0\|$. This equality means that both $a_0$ and $a_1$ are located on the boundary of the ball whose center is $b_0$ and its radius is $dist(A,B)$, which is positive according to Lemma \bref{lem:DistClosures}. Since $a_0\neq a_1$ and the space is strictly convex, the open interval $(a_0,a_1)$ is strictly inside this ball, and so, in particular, $a_{0.5}$ is strictly inside this ball. Thus, $\|a_{0.5}-b_0\|<dist(A,B)$. On the other hand, since $a_{0.5}\in A$ by the convexity of $A$ and since $b_0\in B$, the definition of $dist(A,B)$ implies that $\|a_{0.5}-b_0\|\geq dist(A,B)$, a contradiction. Hence $b_0\neq b_1$ and indeed $([a_0,a_1],[b_0,b_1])$ is a strictly nondegenerate BAP of intervals relative to $(A,B)$ (and relative to $(\partial A,\partial B)$). 
\end{proof}

\begin{lem}\label{lem:parallel}
Suppose that $(X,\|\cdot\|)$ is a strictly convex normed space and that $A$ and $B$ are nonempty, convex and disjoint  subsets of $X$. If $(a_0,b_0)$ and $(a_1,b_1)$ are two distinct BAPs with  respect to $(A,B)$, then both $[a_0,a_1]$ and $[b_0,b_1]$ are nondegenerate intervals, they are strictly parallel, $[a_0,a_1]\subseteq \partial A$ and $[b_0,b_1]\subseteq \partial B$. 
\end{lem}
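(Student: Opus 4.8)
The plan is to read off the nondegeneracy and boundary claims directly from the previous lemmata, and then to establish parallelism in two moves: first show that the two intervals share a common direction vector, and then rule out the degenerate possibility that they lie on one and the same line.

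First I would record the easy part. Since $(a_0,b_0)$ and $(a_1,b_1)$ are BAPs with respect to $(A,B)$, Lemma \bref{lem:BestApproxIntervalBoundary} shows that $([a_0,a_1],[b_0,b_1])$ is a BAP of intervals with respect to $(A,B)$, with $[a_0,a_1]\subseteq\partial A$ and $[b_0,b_1]\subseteq\partial B$. Because the two BAPs are distinct, either $a_0\neq a_1$ or $b_0\neq b_1$, so this BAP of intervals is nondegenerate. Invoking the strict convexity of $X$ through Lemma \bref{lem:StrictlyNondegenerate} then upgrades it to a strictly nondegenerate BAP of intervals, i.e. $a_0\neq a_1$ and $b_0\neq b_1$; this already gives that $[a_0,a_1]$ and $[b_0,b_1]$ are nondegenerate intervals contained in $\partial A$ and $\partial B$, respectively.

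The core step is to prove that $a_1-a_0=b_1-b_0$. Using $a(t)$ and $b(t)$ as in Definition \bref{def:BestApproxPairIntervals}, one has $a(t)-b(t)=(1-t)(a_0-b_0)+t(a_1-b_1)$, and, since $([a_0,a_1],[b_0,b_1])$ is a BAP of intervals, $\|a(t)-b(t)\|=dist(A,B)=:d$ for all $t\in[0,1]$, where $d>0$ by Lemma \bref{lem:DistClosures}. Thus the whole segment joining $a_0-b_0$ to $a_1-b_1$ lies on the sphere of radius $d$ centered at the origin. By strict convexity of $X$, if $a_0-b_0\neq a_1-b_1$ then their midpoint, which equals $a(1/2)-b(1/2)$, would lie strictly inside the ball of radius $d$, contradicting $\|a(1/2)-b(1/2)\|=d$. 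Hence $a_0-b_0=a_1-b_1$, which rearranges to $w:=a_1-a_0=b_1-b_0\neq 0$. Consequently the lines $L:=a_0+\R w$ and $M:=b_0+\R w$, which carry $[a_0,a_1]$ and $[b_0,b_1]$, share the same direction $w$.

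Finally I would show $L\neq M$; once this is established, the common direction forces $L$ and $M$ to be disjoint and to lie in the two-dimensional affine subspace $b_0+\mathrm{span}\{w,\,a_0-b_0\}$, so they are parallel in the sense of the Preliminaries, completing the proof. This last step is the main obstacle, as it amounts to excluding collinearity of the four points. Suppose for contradiction that $a_0-b_0=\mu w$ for some scalar $\mu$, so that $L=M$; then $d=\|a_0-b_0\|=|\mu|\,\|w\|$. Since $a_0,a_1\in A$ and $b_0,b_1\in B$, the definition of $dist(A,B)$ yields $\|a_0-b_1\|\geq d$ and $\|a_1-b_0\|\geq d$; computing $a_0-b_1=(\mu-1)w$ and $a_1-b_0=(\mu+1)w$ turns these into $|\mu-1|\geq|\mu|$ and $|\mu+1|\geq|\mu|$, whence $-1/2\leq\mu\leq 1/2$. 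But then $b_0=a(-\mu)$ (when $\mu\leq 0$) or $b_1=a(1-\mu)$ (when $\mu\geq 0$) lies in $[a_0,a_1]\subseteq A$, contradicting $A\cap B=\emptyset$. Therefore $a_0-b_0\notin\mathrm{span}\{w\}$, so $L$ and $M$ are parallel.
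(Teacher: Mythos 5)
Your proposal is correct and follows essentially the same route as the paper: the nondegeneracy and boundary claims via Lemmas \bref{lem:BestApproxIntervalBoundary} and \bref{lem:StrictlyNondegenerate}, the common direction vector $a_1-a_0=b_1-b_0$ via strict convexity applied to the midpoint of $[a_0-b_0,\,a_1-b_1]$ on the sphere of radius $dist(A,B)$, and then exclusion of collinearity using the minimality of $dist(A,B)$ together with $A\cap B=\emptyset$. The only cosmetic difference is in that last step: the paper runs a four-way case analysis on the parameter placing $b_0$ on the line through $[a_0,a_1]$, whereas you first use the two distance inequalities to confine the parameter to $[-1/2,1/2]$ and then invoke disjointness; the ingredients and conclusion are identical.
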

\begin{proof}
Since $(a_0,b_0)$ and $(a_1,b_1)$ are BAPs with respect to $(A,B)$, Lemma \bref{lem:BestApproxIntervalBoundary} implies that $([a_0,a_1],[b_0,b_1])$ is a BAP of intervals with respect to $(A,B)$, and since $(a_0,b_0)\neq (a_1,b_1)$, either $[a_0,a_1]$ is nondegenerate or $[b_0,b_1]$ is nondegenerate. Thus $([a_0,a_1],[b_0,b_1])$ is a nondegenerate BAP of intervals with respect to $(A,B)$, and since $X$ is strictly convex Lemma \bref{lem:StrictlyNondegenerate} implies that both  $[a_0,a_1]$ and  $[b_0,b_1]$ are nondegenerate. In addition, Lemma \bref{lem:BestApproxIntervalBoundary} implies that $[a_0,a_1]\subseteq \partial A$ and $[b_0,b_1]\subseteq \partial B$. 
 
It remains to be shown that $[a_0,a_1]$ and  $[b_0,b_1]$ are strictly parallel. Lemma \bref{lem:a(t)b(t)}  implies that $(a(t),b(t))$ is a BAP with respect to $(A,B)$ for each $t\in [0,1]$. Thus,  if we define $g:[0,1]\to [0,\infty)$ by
\begin{equation}\label{eq:f}
g(t):=\|a(t)-b(t)\|=\|a_0-b_0+(a_1-b_1-(a_0-b_0))t\|, \quad \forall t\in [0,1],
\end{equation} 
then we have $g(t)=dist(A,B)$ for all $t\in [0,1]$. In particular, 
\begin{equation}\label{eq:f(0.5)=dist(A,B)}
g(0.5)=dist(A,B).
\end{equation}
Assume for the sake of contradiction that $a_0-b_0\neq a_1-b_1$. Since $\|a_0-b_0\|=g(0)=g(1)=\|a_1-b_1\|=dist(A,B)$ and since $dist(A,B)>0$ according to Lemma \bref{lem:DistClosures}, the distinct points $a_0-b_0$ and $a_1-b_1$ are located on the boundary of the ball of positive radius $dist(A,B)$ around the origin. Since $(X,\|\cdot\|)$ is strictly convex,  $\|0.5(a_0-b_0)+0.5(a_1-b_1)\|<dist(A,B)$. But from \beqref{eq:f} we have $g(0.5)=\|0.5(a_0-b_0)+0.5(a_1-b_1)\|$. Therefore $g(0.5)<dist(A,B)$, a contradiction to \beqref{eq:f(0.5)=dist(A,B)}. Thus $a_0-b_0=a_1-b_1$ and hence $v:=a_1-a_0=b_1-b_0$. Since we already know that $b_0\neq b_1$ (because $[b_0,b_1]$ is nondegenerate as we showed earlier), we also have $v\neq 0$.

Consider the real lines $L:=\{a_0+tv: t\in \R\}$ and $M:=\{b_0+sv: s\in \R\}$. By letting $t,s\in [0,1]$ in the definitions of $L$ and $M$, we see that $[a_0,a_1]\subset L$ and $[b_0,b_1]\subset M$. We claim that $b_0\notin L$. Indeed, suppose for the sake of contradiction that $b_0\in L$. Then $b_0=a_0+tv$ for some $t\in\R$ and hence either $t\in [0,1]$ or $t>1$ or $t\in [-1,0)$ or $t<-1$. If $t\in [0,1]$, then $b_0\in [a_0,a_1]\subseteq A$, a contradiction since we assume that $A\cap B=\emptyset$. If $t>1$, then $\|b_0-a_1\|=\|(a_0+tv)-(a_0+v)\|=(t-1)\|v\|<t\|v\|=\|b_0-a_0\|=dist(A,B)$, a contradiction to the minimality of $dist(A,B)$. If $t\in [-1,0)$, then $t+1\in [0,1)$, and so $b_1=b_0+v=a_0+(t+1)v\in [a_0,a_1]\subseteq A$, a contradiction to the assumption $A\cap B=\emptyset$. Therefore only the case $t<-1$ remains; in this case $t+1<0$, and since $b_1=b_0+v=a_0+(t+1)v$, we have $\|b_1-a_0\|=|t+1|\|v\|=-(t+1)\|v\|<-t\|v\|=\|b_0-a_0\|=dist(A,B)$, a contradiction to the minimality of $dist(A,B)$. As a result, indeed $b_0\notin L$. 

It must be that $v$ and $b_0-a_0$ are linearly independent over $\R$. Indeed, if, for the sake of contradiction, $\lambda_1(b_0-a_0)+\lambda_2 v=0$ for a pair of real scalars $(\lambda_1,\lambda_2)\neq (0,0)$, then $\lambda_1\neq 0$ since otherwise $\lambda_2 v=0$, and since the assumption that $(\lambda_1,\lambda_2)\neq (0,0)$ implies that $\lambda_2\neq 0$, we have $v=0$, a  contradiction; thus $\lambda_1\neq 0$, and hence $b_0=a_0+(-\lambda_2/\lambda_1)v$, that is, $b_0\in L$, a contradiction to what has been proved in the previous paragraph. In addition, $L\cap M=\emptyset$, since otherwise $a_0+tv=b_0+sv$ for some $t,s\in\R$, and so $b_0=a_0+(t-s)v\in L$, a contradiction. Thus $L$ and $M$ are strictly parallel since their intersection is the empty set and both of them are located on the same real two-dimensional affine subspace  (namely on $a_0+\textnormal{span}\{v,b_0-a_0\}$). Hence $[a_0,a_1]$  and $[b_0,b_1]$ are strictly parallel since they are located on the strictly parallel real lines $L$ and $M$, respectively. 
\end{proof}
                         
The following lemma, which actually holds in any topological vector space with essentially the same proof, might be known.

\begin{lem}\label{lem:ThreePointsIntervalBoundary}
Let $C$ be a nonempty and convex subset of a normed space $(X,\|\cdot\|)$. If $x$,$y$ and $z$ are three distinct points in $\partial C$ satisfying $y\in [x,z]$, then $[x,z]\subseteq\partial C$.
\end{lem}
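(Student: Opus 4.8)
The plan is to argue by contradiction and to reduce everything to the standard accessibility (line segment) principle for convex sets. First I would record two elementary facts. Since $C$ is convex, its closure $\ol{C}$ is convex as well, and because $x,z\in\partial C\subseteq\ol{C}$ the whole segment $[x,z]$ lies in $\ol{C}$. Consequently a point of $[x,z]$ belongs to $\partial C$ if and only if it does \emph{not} belong to $\Int(C)$, simply because $\partial C=\ol{C}\setminus\Int(C)$. Thus the lemma is equivalent to the assertion that no point of $[x,z]$ lies in $\Int(C)$, and this is what I would aim to prove.

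Parametrize the segment by $p(t):=(1-t)x+tz$ for $t\in[0,1]$, so that $x=p(0)$, $z=p(1)$, and $y=p(s)$ for some $s\in(0,1)$; the restriction $s\in(0,1)$ follows from $y\in[x,z]$ together with the hypothesis that $x,y,z$ are distinct (in particular $x\neq z$, so $p$ is injective on $[0,1]$). Suppose, for the sake of contradiction, that some $w=p(r)$, $r\in[0,1]$, lies in $\Int(C)$. Since $x=p(0)$ and $z=p(1)$ belong to $\partial C$ and hence not to $\Int(C)$, we must have $r\in(0,1)$.

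The key tool is the line segment principle: if $C$ is convex, $p\in\Int(C)$ and $q\in\ol{C}$, then $(1-\lambda)p+\lambda q\in\Int(C)$ for every $\lambda\in[0,1)$. I would either cite this as standard or include the usual short self-contained argument: given a ball $B(p,\epsilon)\subseteq C$ and an approximation $q'\in C$ of $q$, one writes each point sufficiently near $(1-\lambda)p+\lambda q$ as a convex combination of a point of $B(p,\epsilon)\subseteq C$ and the point $q'\in C$, thereby exhibiting a ball around $(1-\lambda)p+\lambda q$ inside $C$. Applying this principle with the interior point $w=p(r)$ and the closure point $x=p(0)$ shows $p(t)\in\Int(C)$ for all $t\in(0,r]$, and applying it with $w$ and $z=p(1)$ shows $p(t)\in\Int(C)$ for all $t\in[r,1)$; here I use that $p$ is affine, so $(1-\lambda)p(r)+\lambda p(0)=p((1-\lambda)r)$ and $(1-\lambda)p(r)+\lambda p(1)=p(r+\lambda(1-r))$. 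Together these give $p(t)\in\Int(C)$ for every $t\in(0,1)$. In particular $y=p(s)\in\Int(C)$, contradicting $y\in\partial C$. Hence no point of $[x,z]$ lies in $\Int(C)$, i.e.\ $[x,z]\subseteq\partial C$.

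The only real obstacle is the line segment principle itself; everything else is bookkeeping (and the case $\Int(C)=\emptyset$ is automatic, since then $\partial C=\ol{C}\supseteq[x,z]$). It is worth emphasizing where the third point $y$ enters: without a boundary point strictly inside the open segment $(x,z)$ the conclusion can fail — a chord of a Euclidean disk has both endpoints on $\partial C$ while its open part lies in $\Int(C)$ — so the hypothesis $y\in(x,z)\cap\partial C$ is precisely what converts ``the open segment is interior'' into the desired contradiction.
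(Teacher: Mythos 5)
Your proof is correct and rests on the same key tool as the paper's: the line segment principle (the half-open segment from an interior point to a closure point lies in the interior), combined with the observation that an interior point anywhere on $[x,z]$ would force $y\in\Int(C)$, contradicting $y\in\partial C$. The paper organizes the argument slightly differently (splitting $[x,z]$ into $[x,y]\cup[y,z]$ and arguing about an arbitrary point $w$ directly), but the substance is the same.
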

\begin{proof}
Let $w\in [x,z]$. Since $x$ and $z$ are in $\overline{C}$  and since $\overline{C}$ is convex (because $C$ is convex, see, e.g., \cite[Theorem 2.23(a), p. 28]{VanTiel1984book}), we have $w\in \overline{C}$. Since $[x,z]=[x,y]\cup [y,z]
$, either $w\in [x,y]$ or $w\in [y,z]$. Suppose that the first case holds. If $w=x$ or $w=y$, then $w\in \partial C$ by our assumption on $x$ and $y$. Otherwise, $w\in (x,y)$. Assume for the sake of contradiction that $w\notin \partial C$. This assumption and the fact that $w\in \overline{C}$ imply that $w\in \Int(C)$. Since, as is well known \cite[Theorem 2.23(b), p. 28]{VanTiel1984book}, the half-open line segment  between an interior point of $C$ and a point in $\overline{C}$ is contained in $\Int(C)$, we have $[w,z)\subseteq \Int(C)$. From the fact that $y\in [w,z)$ we conclude that $y\in \Int(C)$, a contradiction to our assumption that $y\in\partial C$.  Hence $w$ must be in $\partial C$, and since $w$ is an arbitrary point in $[x,y]$, we conclude that $[x,y]\subseteq \partial C$.  Similarly, $[y,z]\subseteq \partial C$. Thus $[x,z]\subseteq \partial C$. 
\end{proof}

The final two auxiliary assertions will be used in Section \bref{sec:Existence}. 

\begin{lem}\label{lem:(Un)BoundedMinimizing}
Let $(X,\|\cdot\|)$ be a normed space, $A$ and $B$ be nonempty subsets of $X$, and $((a_k,b_k))_{k\in\N}$ be a distance minimizing sequence in $A\times B$. Then:
\begin{enumerate}[(i)]
\item\label{item:BoundedOrUnbounded} Either both $(a_k)_{k\in\infty}$ and $(b_k)_{k\in\N}$ are bounded or both of them are unbounded. 
\item\label{item:BoundedSequences} If either $A\cup B$ is bounded, or  $A\cup B$ is unbounded and \beqref{eq:coercive} holds, then both $(a_k)_{k\in\infty}$ and $(b_k)_{k\in\N}$ are bounded. 
\item\label{item:WeaklyLSC} If $((a_k,b_k))_{k\in\N}$ has a subsequence which converges weakly in $X^2$ to some point $(a,b)\in A\times B$, then $(a,b)$ is a BAP relative to $(A,B)$.
\end{enumerate}
\end{lem}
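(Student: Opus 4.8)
The plan is to dispatch the three items in turn, all resting on the single observation that a distance minimizing sequence satisfies $\lim_{k\to\infty}\|a_k-b_k\|=dist(A,B)<\infty$, so in particular the sequence of reals $(\|a_k-b_k\|)_{k\in\N}$ is bounded. For item \beqref{item:BoundedOrUnbounded} I would transfer boundedness between the two sequences using the triangle inequality: if $(a_k)_{k\in\N}$ is bounded, say $\|a_k\|\le M$ for all $k$, then $\|b_k\|\le\|b_k-a_k\|+\|a_k\|\le\|a_k-b_k\|+M$, and since the right-hand side is bounded, so is $(b_k)_{k\in\N}$; by symmetry the boundedness of $(b_k)_{k\in\N}$ forces that of $(a_k)_{k\in\N}$. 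The contrapositive gives that if one is unbounded then so is the other, which is exactly the claimed dichotomy.

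For item \beqref{item:BoundedSequences} the case where $A\cup B$ is bounded is immediate, since $a_k\in A\subseteq A\cup B$ and $b_k\in B\subseteq A\cup B$ for every $k$. In the remaining case I would argue by contradiction: if the sequences were not bounded then, by item \beqref{item:BoundedOrUnbounded}, both would be unbounded, so $\|(a_k,b_k)\|=\sqrt{\|a_k\|^2+\|b_k\|^2}$ would be unbounded and I could extract a subsequence along which $\|(a_k,b_k)\|\to\infty$. The coercivity condition \beqref{eq:coercive} would then force $\|a_k-b_k\|\to\infty$ along that subsequence, contradicting $\lim_{k\to\infty}\|a_k-b_k\|=dist(A,B)<\infty$.

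For item \beqref{item:WeaklyLSC}, let $((a_{k_j},b_{k_j}))_{j\in\N}$ be a subsequence converging weakly in $X^2$ to $(a,b)\in A\times B$. Testing against functionals of the form $(\psi,0)$ and $(0,\chi)$ with $\psi,\chi\in X^*$ shows that weak convergence in the product is coordinatewise, and since $(x,y)\mapsto x-y$ is a bounded linear map it is weak-to-weak continuous, so $a_{k_j}-b_{k_j}$ converges weakly to $a-b$. The weak lower semicontinuity of the norm (a Hahn--Banach consequence) then yields $\|a-b\|\le\liminf_{j\to\infty}\|a_{k_j}-b_{k_j}\|=dist(A,B)$, the last equality because the subsequence inherits the minimizing property. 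Conversely, $a\in A$ and $b\in B$ give $\|a-b\|\ge dist(A,B)$ by the definition of the distance, whence $\|a-b\|=dist(A,B)$ and $(a,b)$ is a BAP relative to $(A,B)$.

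I expect no serious obstacle here; the only point needing a little care is item \beqref{item:WeaklyLSC}, where one must correctly pass from weak convergence in the product space $X^2$ to weak convergence of the differences $a_{k_j}-b_{k_j}$, and then invoke weak lower semicontinuity of the norm, rather than naively attempting to push the norm through the (merely weak) limit.
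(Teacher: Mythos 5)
Your proposal is correct and follows essentially the same route as the paper's proof: part (i) via the boundedness of $(a_k-b_k)_{k\in\N}$ and the triangle inequality (the paper phrases it as ``unbounded minus bounded is unbounded,'' you take the contrapositive), part (ii) by contradiction from (i) together with the coercivity condition \beqref{eq:coercive}, and part (iii) via coordinatewise weak convergence, weak continuity of the difference map, and weak sequential lower semicontinuity of the norm. No gaps.
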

\begin{proof}
\begin{enumerate}[(i)]
\item If both $(a_k)_{k\in\infty}$ and $(b_k)_{k\in\N}$ are bounded, then we are done. Otherwise, one of these sequences, say $(a_k)_{k\in\infty}$, is unbounded. Hence there is an infinite subset $N_1$ of $\N$ such that $\lim_{k\to\infty, k\in N_1}\|a_k\|=\infty$. Since $(a_k-b_k)_{k\in\N}$ converges to the finite number $dist(A,B)$, this sequence is bounded. Since $b_k=a_k-(a_k-b_k)$ for all $k\in N_1$, it follows that $(b_k)_{k\in N_1}$ is a difference between an unbounded sequence and a bounded one, and therefore   $\lim_{k\to\infty, k\in N_1}\|b_k\|=\infty$. Thus $(b_k)_{k\in\N}$ is unbounded too.

\item The assertion obviously holds if $A\cup B$ is bounded since then $A$ and $B$ are bounded, and so are any sequences contained in them. Now assume that $A\cup B$ is unbounded and \beqref{eq:coercive} holds. If, say, $(a_k)_{k\in\infty}$ is unbounded, then by Part \beqref{item:BoundedOrUnbounded} there is an infinite subset $N_1$ of $\N$ such that $\lim_{k\to\infty, k\in N_1}\|a_k\|=\lim_{k\to\infty, k\in N_1}\|b_k\|=\infty$. But then \beqref{eq:coercive} implies that $\lim_{k\to\infty, k\in N_1}\|a_k-b_k\|=\infty$, a contradiction to \beqref{x_ky_xdist(A,B)}. Thus $(a_k)_{k\in\infty}$ is bounded, and from Part \beqref{item:BoundedOrUnbounded} also $(b_k)_{k\in\infty}$ is bounded.

\item Suppose that $(a,b)=(w)\lim_{k\to\infty, k\in N_1}(a_k,b_k)$ for some infinite subset $N_1$ of $\N$. Then $a=(w)\lim_{k\to\infty, k\in N_1}a_k$ and $b=(w)\lim_{k\to\infty, k\in N_1}b_k$, and hence   $a-b=(w)\lim_{k\to\infty, k\in N_1}(a_k-b_k)$. Since the norm is weakly sequentially lower semicontinuous  \cite[II.3.27, p. 68]{DunfordSchwartz1958book}, we have $\|a-b\|\leq \lim_{k\to\infty, k\in N_1}\|a_k-b_k\|=dist(A,B)$. On the other hand $dist(A,B)\leq \|a-b\|$ since $(a,b)\in A\times B$. Hence $\|a-b\|=dist(A,B)$ and $(a,b)$ is a BAP relative to $(A,B)$. 
\end{enumerate}
\end{proof}

The next definition and lemma seem to be new.

\begin{defin}\label{def:NormedLocalWeakSequentialCompact}
We say that a nonempty subset $C$ of a normed space $(X\|\cdot\|)$  is normed locally weakly sequential compact if for every $x\in C$ there is a closed ball $D\subseteq X$ centered at $x$  such that $D\cap C$ is weakly sequentially compact. 
\end{defin}
\begin{example}
Here are a few examples of normed locally weakly sequential compact subsets: any locally compact set in any normed space (because strong convergence implies weak convergent); any weakly sequentially closed set in a reflexive Banach space (because closed balls are weakly compact and hence weakly sequentially compact); any closed and convex set in a reflexive Banach space (because closed and convex sets in a reflexive Banach space are weakly sequentially closed); a not necessarily countable union $\cup_{i\in I} C_i$ of nonempty and closed and convex sets $C_i$ in a reflexive Banach space having the property that $0<\inf\{dist(C_i,C_j)\,|\,j\in I\}$ for all $i\in I$ (because in this case any $x$ in the union satisfies $x\in C_i$ for some $i\in I$, and then the closed ball $D$ of radius $r_i:=0.5\inf\{dist(C_i,C_j)\,|\,j\in I\}$ centered at $x$ satisfies $D\cap (\cup_{j\in I} C_j)=D\cap C_i$, which is a nonempty, closed, convex and bounded set in a reflexive Banach space and hence weakly sequentially compact). 
\end{example}

\begin{lem}\label{lem:WeaklyBolzanoWeierstrass}
Suppose that  $C$ is a nonempty, convex, closed and normed locally weakly sequentially compact subset of a normed space $(X,\|\cdot\|)$. Then any bounded sequence in $C$ has a weakly convergent subsequence whose weak limit is in $C$.
\end{lem}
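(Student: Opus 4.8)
The plan is to reduce an arbitrary bounded sequence in $C$ to the fixed weakly compact ``core'' furnished by the local weak compactness hypothesis, by a radial retraction toward a fixed point, and then to recover the weak limit of the original sequence from that of the retracted one. Concretely, I would first fix a point $x_0\in C$ (possible since $C\neq\emptyset$) and, using the local weak sequential compactness of $C$, a closed ball $D$ centered at $x_0$ of radius $r>0$ such that $D\cap C$ is weakly compact. Given a bounded sequence $(x_k)_{k\in\N}$ in $C$, set $M:=\sup_{k\in\N}\|x_k-x_0\|<\infty$; the case $M=0$ is trivial, so assume $M>0$. For each $k\in\N$ put $t_k:=1$ if $\|x_k-x_0\|\leq r$ and $t_k:=r/\|x_k-x_0\|$ otherwise, and let $y_k:=x_0+t_k(x_k-x_0)$. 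Then $y_k$ lies on the segment $[x_0,x_k]$, so $y_k\in C$ by convexity, while $\|y_k-x_0\|=t_k\|x_k-x_0\|\leq r$ gives $y_k\in D\cap C$.

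Next I would extract the relevant limits. Since $D\cap C$ is weakly compact, it is weakly sequentially compact (any weakly compact subset of a normed space is weakly sequentially compact), so some subsequence $(y_{k_j})_{j\in\N}$ converges weakly to a point $y\in D\cap C$. Because $M<\infty$ and $r>0$, every $t_k$ satisfies $t_k\geq\tau:=\min\{1,r/M\}>0$, and since $t_k\leq 1$ the scalars $(t_{k_j})_j$ lie in the compact interval $[\tau,1]$; after passing to a further subsequence I may assume $t_{k_j}\to t$ with $t\in[\tau,1]$, so in particular $t>0$.

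Finally, from the identity $x_{k_j}=x_0+t_{k_j}^{-1}(y_{k_j}-x_0)$ I would recover the weak limit of the original sequence: for every $\phi\in X^*$ one has $\phi(x_{k_j})=\phi(x_0)+t_{k_j}^{-1}\phi(y_{k_j}-x_0)\to\phi(x_0)+t^{-1}\phi(y-x_0)$, since $t_{k_j}^{-1}\to t^{-1}$ (using $t>0$) and $\phi(y_{k_j}-x_0)\to\phi(y-x_0)$. Hence $(x_{k_j})_j$ converges weakly to $x:=x_0+t^{-1}(y-x_0)$. As $C$ is convex and closed it is weakly closed, hence weakly sequentially closed, and because every $x_{k_j}\in C$ the weak limit $x$ belongs to $C$, which completes the argument.

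The main obstacle is that a bounded sequence in $C$ need not lie in any single weakly compact subset a priori; local weak compactness only controls a possibly small ball around each individual point. The device that circumvents this is the radial retraction $x_k\mapsto y_k$ onto the fixed weakly compact set $D\cap C$, combined with the boundedness-driven lower bound $t_k\geq\tau>0$. This lower bound is precisely what forces the reciprocals $t_{k_j}^{-1}$ to converge to a finite limit, and thereby permits reconstructing the weak limit of $(x_{k_j})$ from that of $(y_{k_j})$; the care needed is to keep $t$ bounded away from $0$, which is where the boundedness of the original sequence is genuinely used.
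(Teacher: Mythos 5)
Your proof is correct and follows essentially the same strategy as the paper's: radially retract the bounded sequence toward a fixed base point into the weakly compact set $D\cap C$, extract a weakly convergent subsequence there together with a convergent subsequence of the scaling factors (bounded away from $0$ by the boundedness of the sequence), and then undo the scaling functional-by-functional to recover the weak limit, which lies in $C$ by weak closedness. The only difference is cosmetic: by capping the scaling factor at $1$ you avoid the paper's separate treatment of the case $\inf_k\|c_k-z\|=0$ and the need to shrink the radius of the ball, which slightly streamlines the argument.
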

\begin{proof}
Let $(c_k)_{k\in\N}$ be an arbitrary bounded sequence in $C$. We need to show that there exists an infinite subset $K\subseteq \N$ and a point $c\in C$ such that $(w)\lim_{k\to\infty, k\in K}c_k=c$. Fix an arbitrary point $z\in C$. If $\epsilon:=\inf\{\|c_k-z\|\,|\,k\in\N\}=0$, then the definition of the infimum implies that there is an infinite subset $K\subseteq \N$ such that $\lim_{k\to\infty, k\in K}\|c_k-z\|=0$. Since $(c_k)_{k\in K}$ converges strongly to $z$, it also converges weakly to $z$, and so we are done (with $c:=z$). Otherwise $\epsilon>0$, and so $\|c_k-z\|\geq\epsilon>0$ for all $k\in\N$. Since $C$ is closed and normed  locally weakly sequentially compact, there is a closed ball $D$, centered at $z$, with radius $r\in (0,\epsilon)$, whose intersection with $C$ is weakly sequentially compact. Since $(c_k)_{k\in\N}$ is bounded, there is some $\rho>0$ such that $\|c_k\|<\rho$ for all $k\in\N$. Define $\alpha_k:=0.5r/\|c_k-z\|$. Then $\alpha_k\in [0.5r/(\rho+\|z\|),0.5r/\epsilon]$ for all $k\in\N$  by the triangle inequality and the choice of $r$ and $\rho$. Hence the compactness of the real-line interval $[0.5r/(\rho+\|z\|),0.5r/\epsilon]$ implies that there is an infinite subset $S\subseteq \N$ and a real number $\alpha\in [0.5r/(\rho+\|z\|),0.5r/\epsilon]$ such that $\lim_{k\to \infty, k\in S}\alpha_k=\alpha$. 

Define $c'_k:=z+\alpha_k(c_k-z)$ for all $k\in S$. Then 
$c'_k\in D$ for every $k\in S$. Moreover, $c'_k\in [z,c_k]\subseteq C$ because $C$ is convex and $\alpha_k\in [0,1]$. Hence $c'_k\in C\cap D$ for all $k\in S$, and therefore, since $C\cap D$ is weakly sequentially compact, there is a point $c'\in C\cap D$ and an infinite subset $K\subseteq S$ such that $(w)\lim_{k\to\infty, k\in K}c'_k=c'$. We claim that $(w)\lim_{k\to\infty, k\in K}c_k=c$, where $c:=z+(1/\alpha)(c'-z)$. Indeed, given an arbitrary continuous linear functional $g\in X^*$, the triangle inequality, the definitions of $c'_k$ and $c$, the linearity of $g$ and the definition of $\|g\|$, the limits $\lim_{k\to\infty, k\in K}\alpha_k=\alpha$ and $\lim_{k\to\infty, k\in K}g(c'_k-c')=0$, and the fact that $\|c'_k-z\|=0.5r$ for every $k\in K$, all imply that for each  $k\in K$, 
\begin{multline}
|g(c_k-c)|=\left|g\left((z+\frac{1}{\alpha_k}(c'_k-z))-(z+\frac{1}{\alpha}(c'-z))\right)\right|\\
=\left|g\left(\frac{1}{\alpha_k}(c'_k-z)-\frac{1}{\alpha}(c'-z)\right)\right|\\
=\left|g\left(\left(\frac{1}{\alpha_k}-\frac{1}{\alpha}\right)(c'_k-z)\right)+g\left(\frac{1}{\alpha}(c'_k-z)-\frac{1}{\alpha}(c'-z)\right)\right|\\
\leq\left|\frac{1}{\alpha_k}-\frac{1}{\alpha}\right||g(c'_k-z)|+\frac{1}{\alpha}|g(c'_k-c')|\\
\leq \left|\frac{1}{\alpha_k}-\frac{1}{\alpha}\right|\|g\|\cdot 0.5r+\frac{1}{\alpha}|g(c'_k-c')|\xrightarrow[k\to\infty, k\in K]{} 0.
\end{multline} 
Because $g\in X^*$ was arbitrary, we have $c=(w)\lim_{k\to\infty,\, k\in K}c_k$. Since $C$ is closed and convex, it is also weakly closed \cite[Corollary 1.5, p. 126]{Conway1990book}, and hence, because $(c_k)_{k\in K}$ is in $C$, also $c\in C$. 
\end{proof}

\begin{remark}\label{rem:WeaklyBolzanoWeierstrass}
A result related to (but definitely different from)  Lemma \bref{lem:WeaklyBolzanoWeierstrass} says that if $(X,d)$ is a locally compact and almost complete geodesic metric space, then every infinite set in $X$ has an accumulation point with respect to the topology induced by the geodesic metric: see \cite[Theorem 4.3]{Myers1945jour}.   
\end{remark}
                 
\section{Uniqueness}\label{sec:Uniqueness}
This section presents our results regarding the (at most) uniqueness of the BAP. 

\begin{thm}\label{thm:UniquenessInNormedSpace}
Suppose that $A$ and $B$ are two nonempty, convex and disjoint subsets of a normed space $(X,\|\cdot\|)$. If there does not exist a nondegenerate BAP of intervals with respect to $(\partial A,\partial B)$, then there exists at most one BAP relative to $(A,B)$. Conversely, if $A$ and $B$ are also closed and there exists at most one BAP with respect to $(A,B)$, then there does not exist a nondegenerate BAP of intervals relative to $(\partial A,\partial B)$.
\end{thm}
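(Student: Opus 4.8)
The plan is to prove each implication by contraposition, reducing everything to the boundary lemmas already established in Section \ref{sec:Auxiliary}, so that essentially no new work is required.

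For the first implication I would argue the contrapositive: assuming that there are two distinct BAPs relative to $(A,B)$, I will produce a nondegenerate BAP of intervals relative to $(\partial A,\partial B)$. So let $(a_0,b_0)$ and $(a_1,b_1)$ be two distinct BAPs relative to $(A,B)$. Since $A$ and $B$ are nonempty, convex and disjoint, Lemma \ref{lem:BestApproxIntervalBoundary} applies directly and yields that $([a_0,a_1],[b_0,b_1])$ is a BAP of intervals relative to both $(A,B)$ and $(\partial A,\partial B)$, with $[a_0,a_1]\subseteq\partial A$ and $[b_0,b_1]\subseteq\partial B$. Because the two pairs are distinct, either $a_0\neq a_1$ or $b_0\neq b_1$, so by Definition \ref{def:BestApproxPairIntervals} this BAP of intervals is nondegenerate. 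This contradicts the hypothesis that no nondegenerate BAP of intervals relative to $(\partial A,\partial B)$ exists, and hence at most one BAP relative to $(A,B)$ can exist.

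For the converse I would again argue the contrapositive, now invoking the additional hypothesis that $A$ and $B$ are closed: assuming that a nondegenerate BAP of intervals relative to $(\partial A,\partial B)$ exists, I will produce two distinct BAPs relative to $(A,B)$. So let $([a_0,a_1],[b_0,b_1])$ be such a pair. Evaluating Definition \ref{def:BestApproxPairIntervals} at $t=0$ and $t=1$ shows that both $(a_0,b_0)$ and $(a_1,b_1)$ are BAPs relative to $(\partial A,\partial B)$. Since $A$ and $B$ are nonempty, disjoint and now closed, the converse part of Lemma \ref{lem:a_0b_0Boundary} upgrades each of these to a BAP relative to $(A,B)$. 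Finally, nondegeneracy means $a_0\neq a_1$ or $b_0\neq b_1$, hence $(a_0,b_0)\neq(a_1,b_1)$, so these are two distinct BAPs relative to $(A,B)$, contradicting the at-most-uniqueness hypothesis.

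As for difficulty, neither direction presents a genuine obstacle once the auxiliary lemmas are in hand; the conceptual content is carried entirely by Lemma \ref{lem:BestApproxIntervalBoundary} in the first implication and by the converse part of Lemma \ref{lem:a_0b_0Boundary} (which itself rests on the identity $dist(A,B)=dist(\partial A,\partial B)$ from Lemma \ref{lem:DistBoundary}) in the second. The single point requiring care is the role of closedness in the converse: it is exactly what forces $\partial A\subseteq A$ and $\partial B\subseteq B$, so that boundary-BAPs are genuine BAPs of the sets themselves; without it the interval endpoints need not belong to $A$ and $B$ at all. I would also verify carefully that nondegeneracy of the interval pair is correctly read as $(a_0,b_0)\neq(a_1,b_1)$ in both directions, since this is the hinge that converts ``interval'' statements into ``two distinct points'' statements and back.
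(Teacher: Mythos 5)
Your proposal is correct and follows essentially the same route as the paper's proof: both directions reduce to Lemma \bref{lem:BestApproxIntervalBoundary} (for passing from two distinct BAPs to a nondegenerate BAP of intervals on the boundaries) and to the converse part of Lemma \bref{lem:a_0b_0Boundary} (for the closed case), with your contrapositive phrasing being just a restatement of the paper's argument by contradiction. No gaps.
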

\begin{proof}
Assume first that there does not exist a nondegenerate BAP of intervals with respect to $(\partial A,\partial B)$. If $dist(A,B)$ is not attained, then there does not exist any BAP relative to $(A,B)$, and hence obviously there exists at most one BAP relative to $(A,B)$. Otherwise $dist(A,B)$ is attained, and hence there exists at least one BAP $(a_0,b_0)$ with respect to $(A,B)$. If, for the sake of contradiction, there exists another (different) BAP $(a_1,b_1)$ with respect to $(A,B)$, then Lemma \bref{lem:BestApproxIntervalBoundary}  implies that $([a_0,a_1],[b_0,b_1])$ is a BAP of intervals with respect to $(\partial A,\partial B)$, and $([a_0,a_1],[b_0,b_1])$ is nondegenerate since either $a_0\neq a_1$ or $b_0\neq b_1$. This contradicts the assumption that there does not exist a nondegenerate BAP of intervals with respect to $(\partial A,\partial B)$. Hence $(a_0,b_0)$ is the unique BAP relative to $(A,B)$. Conversely, suppose that $A$ and $B$ are also closed and that  there exists at most one BAP with respect to $(A,B)$. If, for the sake of contradiction, there exists  a nondegenerate BAP of intervals $([a_0,a_1],[b_0,b_1])$ with respect to $(\partial A,\partial B)$, then either $a_0\neq a_1$ or $b_0\neq b_1$, and in both cases $(a_0,b_0)$ and $(a_1,b_1)$ are two distinct BAPs with respect to $(\partial A,\partial B)$ and hence, according to Lemma \bref{lem:a_0b_0Boundary}, also with respect to $(A,B)$. This is a contradiction to the assumption that there exists at most one BAP with respect to $(A,B)$. 
\end{proof}

\begin{thm}\label{thm:IntersectionStrictlyConvexSets}
Let $(X,\|\cdot\|)$ be a normed space. Suppose that $m$ and $n$ are natural numbers and that $A_1,A_2,\ldots,A_m$ and $B_1,B_2,\ldots,B_n$ are nonempty and strictly convex subsets of $X$. If $A:=\cap_{i=1}^m A_i$ and $B:=\cap_{j=1}^n B_j$ are nonempty and disjoint, then there exists at most one BAP relative to $(A,B)$. If, in addition, $dist(A,B)$ is attained, then there exists exactly one BAP relative to $(A,B)$.
\end{thm}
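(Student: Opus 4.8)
The plan is to argue by contradiction, forcing a nondegenerate line segment onto the boundary of a strictly convex intersection, which is impossible. First I would record the standing structure: each $A_i$ is strictly convex, hence convex, so that $A=\cap_{i=1}^m A_i$ is convex; symmetrically $B=\cap_{j=1}^n B_j$ is convex. Together with the hypotheses that $A$ and $B$ are nonempty and disjoint, this places us exactly in the setting where Lemma \bref{lem:BestApproxIntervalBoundary} applies.

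Suppose, toward a contradiction, that $(a_0,b_0)$ and $(a_1,b_1)$ are two \emph{distinct} BAPs relative to $(A,B)$. By Lemma \bref{lem:BestApproxIntervalBoundary}, $([a_0,a_1],[b_0,b_1])$ is a BAP of intervals with respect to $(\partial A,\partial B)$; in particular $[a_0,a_1]\subseteq\partial A$ and $[b_0,b_1]\subseteq\partial B$, and since the two pairs are distinct at least one of these intervals is nondegenerate. Without loss of generality assume $a_0\neq a_1$ (the case $b_0\neq b_1$ is symmetric, working with $B$ and the sets $B_j$ in place of $A$ and the $A_i$).

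The heart of the argument is that a \emph{finite} intersection of strictly convex sets inherits the obstruction to containing boundary segments. Since $a_0,a_1\in A\subseteq A_i$ are distinct and each $A_i$ is strictly convex, the open interval $(a_0,a_1)$ is contained in $\Int(A_i)$ for every $i$. Because $m$ is finite, $\bigcap_{i=1}^m\Int(A_i)$ is an open set contained in $A$, and hence contained in $\Int(A)$; thus $(a_0,a_1)\subseteq\Int(A)$. This contradicts $[a_0,a_1]\subseteq\partial A$, because $\Int(A)\cap\partial A=\emptyset$ while $(a_0,a_1)\neq\emptyset$. Therefore no two distinct BAPs can coexist, which is precisely the assertion that there is at most one BAP relative to $(A,B)$. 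For the last sentence of the statement, if $dist(A,B)$ is attained then at least one BAP exists, and combined with the uniqueness just established, exactly one BAP exists.

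I do not expect a serious obstacle; the only points demanding care are (i) checking the hypotheses of Lemma \bref{lem:BestApproxIntervalBoundary}, which rest on the implication strict convexity $\Rightarrow$ convexity of each factor, and (ii) the genuine use of the \emph{finiteness} of $m$ and $n$, which is what guarantees that $\bigcap_i\Int(A_i)$ is open and therefore embeds in $\Int(A)$; an infinite intersection would break this step. As an alternative to the third paragraph one could first prove that $A$ and $B$ are themselves strictly convex (the same interior-intersection computation shows any two distinct points of $A$ have their open segment inside $\Int(A)$) and then quote the contradiction; I prefer the inlined version above since it is shorter. I would avoid routing the proof through Theorem \bref{thm:UniquenessInNormedSpace}, since its hypothesis concerns intervals whose endpoints lie only in $\partial A$ rather than in $A$, and strict convexity of the $A_i$ is a statement about points of $A_i$ — working directly with the genuine BAP components $a_0,a_1\in A$ sidesteps this membership subtlety.
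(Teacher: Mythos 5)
Your proof is correct and follows essentially the same route as the paper: both reduce to Lemma \bref{lem:BestApproxIntervalBoundary} forcing $[a_0,a_1]\subseteq\partial A$ and $[b_0,b_1]\subseteq\partial B$, then contradict strict convexity of the finite intersection; the only difference is that you inline the (correct) open-intersection argument showing $(a_0,a_1)\subseteq\bigcap_i\Int(A_i)\subseteq\Int(A)$, where the paper simply asserts that a finite intersection of strictly convex sets is strictly convex.
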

\begin{proof}
Since, as follows immediately from the definition, any finite intersection $\cap_{k=1}^{\ell}C_k$ of strictly convex sets is strictly convex (indeed, if $x,y\in \cap_{k=1}^{\ell}C_k$, then $x,y\in C_k$ for all $k\in\{1,\ldots,\ell\}$, and hence the strict convexity of $C_k$ implies that the open interval $(x,y)$ is contained in $\Int(C_k)$ for all $k\in\{1,\ldots,\ell\}$, and hence $(x,y)\subseteq \cap_{k=1}^{\ell}\Int(C_k)\subseteq \Int(\cap_{k=1}^{\ell}C_k)$, as required), it follows that $A$ and $B$ are strictly convex and, in particular, convex. Now, if $dist(A,B)$ is not attained, then obviously  there exists at most one (actually zero) BAP relative to $(A,B)$. Otherwise, there exists at least one BAP $(a_0,b_0)$ relative to $(A,B)$. Assume for the sake of contradiction that there exists another  BAP $(a_1,b_1)\neq(a_0,b_0)$ relative to $(A,B)$. Then either $[a_0,a_1]$ or $[b_0,b_1]$ is nondegenerate. Since the conditions of Lemma \bref{lem:BestApproxIntervalBoundary} hold, we conclude from it that $[a_0,a_1]\subseteq\partial A$ and $[b_0,b_1]\subseteq\partial B$. This  contradicts the fact that both $A$ and $B$ are strictly convex and hence their boundaries do not contain any nondegenerate intervals. Therefore there exists exactly one BAP relative to $(A,B)$, as claimed. 
\end{proof}

\begin{cor}\label{cor:StrictlyConvexSets}
Let $(X,\|\cdot\|)$ be a normed space and $\emptyset\neq A$ and $\emptyset\neq B$ be strictly convex and disjoint subsets of $X$. Then there exists at most one BAP relative to $(A,B)$, and there exists a unique such BAP if, in addition, $dist(A,B)$ is attained.
\end{cor}

\begin{proof}
This follows from Theorem \bref{thm:IntersectionStrictlyConvexSets} by letting $m:=n:=1$. 
\end{proof}

\begin{remark}\label{rem:StilesPai}
\begin{enumerate}[(i)]
\item\label{item:Luo} Corollary \bref{cor:StrictlyConvexSets} generalizes \cite[Theorem 4.1]{Luo2014jour}, where there the strictly convex subsets $A$ and $B$ are assumed to satisfy $dist(A,B)>0$ (this condition is not written explicitly in \cite[Theorem 4.1]{Luo2014jour}, but rather in the beginning of \cite[Section 3]{Luo2014jour}). We obtained Corollary \bref{cor:StrictlyConvexSets} more than a year before being aware to  \cite[Theorem 4.1]{Luo2014jour}. See \cite[Theorems  4.2 and 4.3]{Luo2014jour} for additional related results. 
\item \label{item:Stiles} In \cite[Theorem 1.1]{Stiles1965b-jour} it is claimed that if $A$ and $B$ are two nonempty and disjoint subsets of a normed space $(X,\|\cdot\|)$ and if either $A$ or $B$ is strictly convex, then there exists at most one BAP relative to $(A,B)$ (this was formulated in the following somewhat different manner: ``the distance between $A$ and $B$ is attained at most at one point''). This claim is false, as Example \bref{ex:RectangleEllipseLinfty} below shows. The main mistake in \cite[Proof of Theorem 1.1]{Stiles1965b-jour} is the implicit assumption that the line segment $[P_B(x),P_B(y)]$ mentioned there is non-degenerate, and this is not necessarily true if $X$ is not strictly convex even if $B$ is strictly convex: again, see Example \bref{ex:RectangleEllipseLinfty}. We also note that \cite[Proof of Theorem 1.1]{Stiles1965b-jour} suffers from other issues, such as the unproven claim that if $A$ and $B$ are disjoint and if $(a,b)$ is a BAP relative to $(A,B)$, then $a\in\partial A$ and $b\in \partial B$ (this claim is true but requires a proof, as we showed in Lemma \bref{lem:a_0b_0Boundary} above) and the somewhat ambiguous notations $P_B(x)$ and $P_B(y)$ (while it is clear from \cite[Proof of Theorem 1.1]{Stiles1965b-jour} that both $(x,P_B(x))$ and $(y,P_B(y))$ are  BAPs with respect to $(A,B)$, when presenting  the operator of best approximation projection onto $B$ one needs to consider the issues of existence and uniqueness of this operator, and this has not been done in \cite[Proof of Theorem 1.1]{Stiles1965b-jour}). 

\item \label{item:PaiUniqueness} The sufficient condition (which is also necessary if both subsets are assumed to be closed) mentioned in Theorem \bref{thm:UniquenessInNormedSpace} for the at most uniqueness of a BAP relative to two nonempty, convex and disjoint subsets $A$ and $B$ of a general normed space is not easily verified. Other non-easily verified sufficient (and necessary) conditions in a general normed space appear in \cite[Theorems 6.1 and 6.2]{Pai1974jour}, where there the subsets are convex and have positive distance (this is not written explicitly, but follows from the proof which uses \cite[Theorem 4.4]{Pai1974jour} which assumes positive distance). For instance, in \cite[Theorem 6.2]{Pai1974jour} one needs to verify the non-existence of a continuous linear functional $g$ having the following properties (where we adopt our notation): its norm is 1, its real part attains a minimum over $A$ at two different points $a_0$ and $a_1$, its real part attains a minimum over $B$ at two different points $b_0$ and $b_1$, $g(a_0-b_0)=\|a_0-b_0\|$ and $g(a_1-b_1)=\|a_1-b_1\|$. 
\end{enumerate}
\end{remark}

\begin{thm}\label{thm:IntersectionStrictlyConvexNorm}
Given $m,n\in\N$, suppose that $A_1,A_2,\ldots,A_m$ and $B_1,B_2,\ldots,B_n$ are nonempty and convex subsets of a strictly convex normed space $(X,\|\cdot\|)$ such that $A:=\bigcap_{i=1}^m A_i$ and $B:=\bigcap_{j=1}^n B_j$ are nonempty and that $A\cap B= \emptyset$. If for each pair $(i,j)\in \{1,2,\ldots,m\}\times \{1,2,\ldots,n\}$ either:
\begin{enumerate}[(I)]
\item\label{item:A_iIsStrictlyConvex} $A_i$ is strictly convex, or 
\item\label{item:B_jIsStrictlyConvex} $B_j$ is strictly convex, or 
\item\label{ParallelIntervalsA_iB_j} there does not exist a pair of two nondegenerate and strictly parallel intervals such that one of them is contained in $\partial A_i$ and the other is contained in $\partial B_j$, 
\end{enumerate}
then there exists at most one BAP with respect to $(A,B)$. If, in addition, $dist(A,B)$ is attained, then there exists a unique BAP with respect to $(A,B)$.
\end{thm}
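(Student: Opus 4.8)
The plan is to argue by contradiction, leaning on Lemma \ref{lem:parallel} for the global geometry and on a pigeonhole argument (via Lemma \ref{lem:ThreePointsIntervalBoundary}) to localize the contradiction to a single pair of indices. First I would observe that, being finite intersections of convex sets, $A$ and $B$ are convex; together with the hypotheses that they are nonempty and disjoint and that $X$ is strictly convex, this places us exactly in the setting of Lemma \ref{lem:parallel}. Suppose, toward a contradiction, that there exist two distinct BAPs $(a_0,b_0)$ and $(a_1,b_1)$ relative to $(A,B)$. Lemma \ref{lem:parallel} then yields at once that $[a_0,a_1]$ and $[b_0,b_1]$ are both \emph{nondegenerate} intervals, that they are parallel (lying on distinct parallel lines $L$ and $M$), and that $[a_0,a_1]\subseteq\partial A$ and $[b_0,b_1]\subseteq\partial B$.

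The key geometric fact I would establish next is that the boundary of a finite intersection is covered by the boundaries of its constituents: if $a\in\partial A$, then $a\in\partial A_i$ for some $i$. Indeed, $a\in\overline{A}\subseteq\overline{A_i}$ for every $i$ since $A\subseteq A_i$; and if $a$ belonged to $\Int(A_i)$ for \emph{every} $i$, then $a$ would lie in the open set $\bigcap_{i=1}^m\Int(A_i)\subseteq\bigcap_{i=1}^m A_i=A$, forcing $a\in\Int(A)$ and contradicting $a\in\partial A$. Hence $a\in\overline{A_i}\setminus\Int(A_i)=\partial A_i$ for at least one $i$, and the analogous statement holds for $B$ and the $B_j$.

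Now I would extract a single index. Choosing any $2m+1$ distinct points on the nondegenerate interval $[a_0,a_1]\subseteq\partial A$ and distributing them among the $m$ sets $\partial A_1,\ldots,\partial A_m$ (which cover $[a_0,a_1]$ by the previous step), the pigeonhole principle produces an index $i_0$ and three distinct collinear points of $[a_0,a_1]$ all lying in $\partial A_{i_0}$; ordering them along $L$ and applying Lemma \ref{lem:ThreePointsIntervalBoundary} yields a nondegenerate subinterval $I_A\subseteq[a_0,a_1]\cap\partial A_{i_0}$. Symmetrically I obtain an index $j_0$ and a nondegenerate subinterval $I_B\subseteq[b_0,b_1]\cap\partial B_{j_0}$. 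Since $I_A\subseteq[a_0,a_1]\subseteq L$ and $I_B\subseteq[b_0,b_1]\subseteq M$ with $L$ and $M$ parallel, the intervals $I_A$ and $I_B$ are nondegenerate and parallel, with $I_A\subseteq\partial A_{i_0}$ and $I_B\subseteq\partial B_{j_0}$.

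Finally I would note that this contradicts each of the three alternatives of the hypothesis applied to the pair $(i_0,j_0)$: option \ref{ParallelIntervalsA_iB_j} is contradicted directly by the parallel pair $(I_A,I_B)$; option \ref{item:A_iIsStrictlyConvex} is contradicted because a strictly convex $A_{i_0}$ has no nondegenerate interval in $\partial A_{i_0}$, yet $I_A$ is one; and option \ref{item:B_jIsStrictlyConvex} is contradicted symmetrically by $I_B$. Hence no two distinct BAPs can exist, establishing the at-most-one claim, and the last sentence follows since attainment of $dist(A,B)$ supplies at least one BAP. I expect the main obstacle to be precisely the step that upgrades a merely pointwise covering of $[a_0,a_1]$ by the $\partial A_i$ into a genuine nondegenerate subinterval lying in a single $\partial A_{i_0}$; this is where Lemma \ref{lem:ThreePointsIntervalBoundary} (or, alternatively, a Baire-category argument applied to the finite closed cover $\{[a_0,a_1]\cap\partial A_i\}_{i=1}^m$ of the interval) carries the weight of the proof.
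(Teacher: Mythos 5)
Your proposal is correct and follows essentially the same route as the paper's proof: invoke Lemma \ref{lem:parallel} to get nondegenerate parallel intervals in $\partial A$ and $\partial B$, use the covering $\partial A\subseteq\bigcup_i\partial A_i$ together with a pigeonhole argument and Lemma \ref{lem:ThreePointsIntervalBoundary} to localize a nondegenerate subinterval in a single $\partial A_{i_0}$ (and likewise for $B_{j_0}$), and then contradict all three alternatives for the pair $(i_0,j_0)$. The only cosmetic differences are that you prove the boundary-covering fact directly (the paper cites a reference) and use $2m+1$ points in place of the paper's ``some preimage under $\phi$ is infinite'' version of the pigeonhole step.
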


\begin{proof}
By our assumption both $A$ and $B$ are convex and disjoint. If $dist(A,B)$ is not attained, then obviously there exists at most one ( in fact, zero) BAP with respect  to $(A,B)$. Otherwise, there exists at least one BAP $(a_0,b_0)$ with respect to $(A,B)$. Suppose by way of contradiction that there exists another BAP $(a_1,b_1)\neq (a_0,b_0)$ relative to $(A,B)$. Then Lemma \bref{lem:parallel} implies that $[a_0,a_1]\subseteq \partial A$ and $[b_0,b_1]\subseteq \partial B$,  that both $[a_0,a_1]$ and  $[b_0,b_1]$ are nondegenerate, and that $[a_0,a_1]$ and $[b_0,b_1]$ are strictly parallel. As is well known, and can easily be proved (see, for instance \cite{BoundaryIntersection2016misc} for the case of two  subsets; the case of any finite number of subsets follows immediately by induction from the case of two subsets), the boundary of a finite intersection of subsets is contained in the union of the boundaries of the subsets which induce the intersection. Therefore,  $\partial A\subseteq \bigcup_{i=1}^m \partial A_i$. Hence, $[a_0,a_1]\subseteq \bigcup_{i=1}^m \partial A_i$, and so for each $t\in [0,1]$ there is at least one index $\phi(t)\in \{1,2,\ldots,m\}$ such that $a(t)=a_0+t(a_1-a_0)\in \partial A_{\phi(t)}$. Then $\phi$ is a function from $[0,1]$ to $\{1,2,\ldots,m\}$, and hence  we have $[0,1]=\cup_{i=1}^m\phi^{-1}(i)$. 

If $\phi^{-1}(i)$ is finite for each $i\in \{1,2,\ldots,m\}$, then so is the finite union $\cup_{i=1}^m\phi^{-1}(i)$, namely $[0,1]$ is finite, a contradiction. Hence $\phi^{-1}(i)$  is infinite for some $i\in \{1,2,\ldots,m\}$, i.e.,  there is an infinite subset $T_i\subseteq [0,1]$ such that $a(t)\in\partial A_i$ for each $t\in T_i$. In particular, there are three points $t_1<t_2<t_3$ in $T_i$, and since $a(t)=a_0+t(a_1-a_0)$ for all $t\in [0,1]$, the points $a_{t_1}$, $a_{t_2}$, and $a_{t_3}$ are three distinct points in $\partial A_i$. In addition, these points are contained in $[a_0,a_1]$, and since $t_1<t_2<t_3$ they satisfy $a_{t_2}\in [a_{t_1},a_{t_3}]$. We conclude from Lemma \bref{lem:ThreePointsIntervalBoundary} that $[a_{t_1},a_{t_3}]\subseteq \partial A_i$. Hence $A_i$ is not strictly convex, and so Assumption  \beqref{item:A_iIsStrictlyConvex} in the formulation of the theorem does not hold. Similarly, there are some $j\in \{1,2,\ldots,n\}$ and $t'_1$ and $t'_3$ in $[0,1]$ such that $[b_{t'_1},b_{t'_3}]$ is a nondegenerate interval contained in $[b_0,b_1]\cap \partial B_j$. Hence $B_j$ is not strictly convex, and so Assumption  \beqref{item:B_jIsStrictlyConvex} in the formulation of the theorem does not hold.  Since $[a_0,a_1]$ is strictly parallel to $[b_0,b_1]$ and since $[a_{t_1},a_{t_3}]\subseteq [a_0,a_1]$ and $[b_{t'_1},b_{t'_3}]\subseteq [b_0,b_1]$, it follows that  $[a_{t_1},a_{t_3}]$ is a nondegenerate interval which is contained in $\partial A_i$ and is strictly parallel to the nondegenerate interval $[b_{t'_1},b_{t'_3}]$ which is contained in $\partial B_j$, and this shows that also Assumption  \beqref{ParallelIntervalsA_iB_j} in the formulation of the theorem does not hold. 

We conclude that none of the Assumptions \beqref{item:A_iIsStrictlyConvex}--\beqref{ParallelIntervalsA_iB_j} in the formulation of the theorem holds, a contradiction. Consequently, the assumption that there exists more than one BAP with respect to $(A,B)$ cannot hold, namely, there exists a unique BAP with respect to $(A,B)$.
\end{proof}

From Theorem \bref{thm:IntersectionStrictlyConvexNorm} with $m:=n:=1$ we obtain the following corollary. 

\begin{cor}\label{cor:StrictlyConvexNorm}
Suppose that $A$ and $B$ are two nonempty, convex and disjoint subsets of a strictly convex normed space $(X,\|\cdot\|)$. If either 
\begin{enumerate}[(I)]
\item $A$ is strictly convex, or 
\item $B$ is strictly convex, or 
\item\label{item:ParallelBoundaries} there does not exist any pair of two nondegenerate and strictly parallel intervals having the property that one of them is contained in the boundary of $A$ and the other is contained in the boundary of $B$, 
\end{enumerate}
then there exists at most one BAP with respect to $(A,B)$. If, in addition, $dist(A,B)$ is attained, then there exists a unique BAP with respect to $(A,B)$.
\end{cor}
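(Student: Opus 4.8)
The plan is to obtain this corollary as the special case $m := n := 1$ of Theorem \bref{thm:IntersectionStrictlyConvexNorm}. With this choice one has $A = A_1$ and $B = B_1$, so $\partial A = \partial A_1$ and $\partial B = \partial B_1$, and the three hypotheses (i), (ii), (iii) of the corollary become exactly the three alternatives \beqref{item:A_iIsStrictlyConvex}, \beqref{item:B_jIsStrictlyConvex}, \beqref{ParallelIntervalsA_iB_j} of the theorem for the unique pair $(i,j) = (1,1)$. The conclusion of the theorem then delivers the conclusion of the corollary verbatim. So the first thing I would do is simply verify this dictionary of hypotheses and invoke the theorem.

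Since the reduction is so immediate, I would also record the self-contained argument, which is in fact shorter than the general proof because the combinatorial bookkeeping of Theorem \bref{thm:IntersectionStrictlyConvexNorm} (the function $\phi$ used to locate the boundary intervals among the various $\partial A_i$) is unnecessary when there is only one set on each side. First I would dispose of the case in which $dist(A,B)$ is not attained: then there is no BAP at all, so trivially there is at most one, and the second assertion is vacuous. Otherwise I would fix a BAP $(a_0,b_0)$ and argue by contradiction, supposing a second, distinct BAP $(a_1,b_1)$ exists.

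At this point Lemma \bref{lem:parallel} does all the work. Since $(X,\|\cdot\|)$ is strictly convex and $A,B$ are nonempty, convex and disjoint, the existence of two distinct BAPs forces $[a_0,a_1]$ and $[b_0,b_1]$ to be \emph{nondegenerate} intervals that are \emph{parallel}, with $[a_0,a_1]\subseteq\partial A$ and $[b_0,b_1]\subseteq\partial B$. Each of the three hypotheses then yields an immediate contradiction: if $A$ (respectively $B$) is strictly convex, then its boundary contains no nondegenerate interval, contradicting $[a_0,a_1]\subseteq\partial A$ (respectively $[b_0,b_1]\subseteq\partial B$); and if hypothesis (iii) holds, then the pair $([a_0,a_1],[b_0,b_1])$ is precisely a pair of nondegenerate, parallel intervals lying in $\partial A$ and $\partial B$, which is forbidden. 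Hence no second BAP can exist, and the uniqueness (as opposed to ``at most one'') follows the moment attainment is assumed, since attainment guarantees at least one BAP.

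There is essentially no genuine obstacle here: all the substantive geometry, namely that two distinct BAPs in a strictly convex space produce parallel nondegenerate intervals on the two boundaries, has already been packaged into Lemma \bref{lem:parallel}. The only care needed is bookkeeping, matching each of the three alternatives with the corresponding contradiction and treating the non-attained case separately, so I expect the ``hard part'' to be purely presentational, i.e. deciding whether to present the corollary as a one-line specialization of Theorem \bref{thm:IntersectionStrictlyConvexNorm} or to spell out the shorter direct argument via Lemma \bref{lem:parallel}.
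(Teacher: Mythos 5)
Your proposal is correct and takes the same route as the paper, which obtains the corollary precisely as the specialization $m:=n:=1$ of Theorem \bref{thm:IntersectionStrictlyConvexNorm}. Your supplementary direct argument via Lemma \bref{lem:parallel} is also sound (it is essentially the first portion of the theorem's proof with the indexing bookkeeping stripped away), but it is not needed beyond the one-line reduction.
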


\begin{remark}\label{rem:StrictConvexNormedSpace}
\begin{enumerate}[(i)]
\item The strict convexity of the norm in Theorem \bref{thm:IntersectionStrictlyConvexNorm} and Corollary \bref{cor:StrictlyConvexNorm} is essential for uniqueness (if not  both $A$ and $B$ are strictly convex): see Examples \bref{ex:RectangleEllipseLinfty} and \bref{ex:NonParallelLinfty} below for counterexamples. 

\item Without imposing the non-parallelism assumption in Theorem \bref{thm:IntersectionStrictlyConvexNorm} and Corollary \bref{cor:StrictlyConvexNorm}, there can be counterexamples. Indeed, consider for instance Example \bref{ex:EllipseCylinder} and Figure \bref{fig:EllipseCylinder}, where the boundaries of both $A$ and $B$ contain non-degenerate parallel intervals which  constitute a strictly non-degenerate BAP of intervals with respect to $(A,B)$, and so there are (infinitely) many BAPs relative to $(A,B)$ in this case. On the other hand, uniqueness can hold even without the non-parallelism assumption, as Example \bref{ex:TwoD} and Figure \bref{fig:TwoD} show. Hence one can think about the non-parallelism condition as being analogous to the non-vanishing condition on a smooth operator $T$: if its derivative (or Jacobian)  is non-vanishing at some point $x_0$, then the equation $T(x)=y_0:=T(x_0)$ has a unique solution (i.e., $x=x_0$) in a neighborhood of $x_0$, but there might be cases in which this equation has a unique solution even if $T'(x_0)=0$. In this connection, we note that asymptotic parallel lines do not necessarily lead to many BAPs: indeed, as the counterexample in Remark \bref{rem:Existence}\beqref{item:NoBAP} shows, they can lead to the non-existence of any BAP at all.

\item\label{item:A-A_B-B} Theorem \bref{thm:IntersectionStrictlyConvexNorm}\beqref{item:ParallelBoundaries} significantly generalizes \cite[Theorem 3.1]{Pai1974jour} which says that if $X$ is a uniformly convex Banach space (actually strict convexity is sufficient), $A$ and $B$ are closed and convex and $(A-A)\cap (B-B)=\{0\}$, then there exists at most one BAP relative to $(A,B)$ (see also \cite[Theorem 1.1]{Narang1984jour} for a generalization of this theorem to strictly convex metric linear spaces). Indeed, we can assume that $A\cap B=\emptyset$, since otherwise everything is trivial. Suppose that the above-mentioned condition holds and assume, for a contradiction, that Theorem \bref{thm:IntersectionStrictlyConvexNorm}\beqref{item:ParallelBoundaries} does not hold, namely that there are nondegenerate and strictly parallel intervals $[a_1,a_2]\subseteq\partial A$ and $[b_1,b_2]\subseteq\partial B$. Then either $u:=(a_2-a_1)/\|a_2-a_1\|$ and $v:=(b_2-b_1)/\|b_2-b_1\|$ are equal, or $u=-v$. Assume that the first case holds: the proof in the second case is similar. Let $r:=\min\{\|a_2-a_1\|,\|b_2-b_1\|\}$. Then $r>0$, $a_1+ru\in A$, $b_1+rv\in B$, and $ru=(a_1+ru)-a_1\in A-A$, $ru=rv=(b_1+rv)-b_1\in B-B$. Hence $ru$ is a nonzero vector in $(A-A)\cap (B-B)$, a contradiction which proves the assertion. We also note that the condition $(A-A)\cap (B-B)=\{0\}$ is frequently violated: indeed, just consider the case where both $A$ and $B$ have nonempty interior, as in the case of Figure  \bref{fig:PolygonDrop}: in this case Theorem \bref{thm:IntersectionStrictlyConvexNorm}\beqref{item:ParallelBoundaries} holds but there are $r>0$, $a\in A$ and $b\in B$ such that the open balls of radius $r$ and centers $a$ and $b$, respectively, are contained in $A$ and $B$, respectively, and hence, given an arbitrary unit vector $u\in X$, we have $a':=a+0.5ru\in A$, $b':=b+0.5u\in B$, and $0\neq 0.5ru=a'-a=b'-b\in (A-A)\cap (B-B)$.
\end{enumerate}
\end{remark}

We finish this section with the following theorem which  generalizes \cite[Theorem 4, p. 51]{Xu1983jour}, which says that if $(X,\|\cdot\|)$ is a strictly convex Banach space, and $A$ and $B$ are closed and convex subsets of $X$, and if $(a_0,b_0)$ is a BAP relative to $(A,B)$, then  $(a_0,b_0)$ is the unique BAP relative to $(A,B)$ if and only if  $(A-a_0)\cap (B-b_0)=\{0\}$. Our proof is partly inspired by the proof of \cite[Theorem 4, p. 51]{Xu1983jour} and also corrects a minor issue which appears there. 
 
\begin{thm}\label{thm:UniquenessExistence}
Suppose that $(X,\|\cdot\|)$ is a normed space, that $A$ and $B$ are two subsets in $X$, and that $(a_0,b_0)$ is a BAP relative to $(A,B)$. If $(a_0,b_0)$ is the unique BAP relative to $(A,B)$, then  $(A-a_0)\cap (B-b_0)=\{0\}$. Conversely, if  $(X,\|\cdot\|)$ is strictly convex, $B-A$ is convex, and $(A-a_0)\cap (B-b_0)=\{0\}$, then $(a_0,b_0)$ is the unique BAP relative to $(A,B)$. In particular, if  $(X,\|\cdot\|)$ is strictly convex and $B-A$ is convex (as happens, in particular, when both $A$ and $B$ are convex), then $(a_0,b_0)$ is the unique BAP relative to $(A,B)$ if and only if $(A-a_0)\cap (B-b_0)=\{0\}$.
\end{thm}
\begin{proof}
Suppose that $(a_0,b_0)$ is the unique BAP relative to $(A,B)$ and assume, by way of contradiction, that $(A-a_0)\cap (B-b_0)\neq\{0\}$. Then there are $a_1\in A$, $b_1\in B$ and $0\neq v\in X$ which satisfy $a_1-a_0=v=b_1-b_0$. Therefore $b_0-a_0=b_1-a_1$, and hence $\|b_1-a_1\|=\|b_0-a_0\|=dist(A,B)$. Thus $(a_1,b_1)$ is a BAP relative to $(A,B)$. On the other hand, since $v\neq 0$, it follows from the equalities $a_1-a_0=v=b_1-b_0$ that $a_1\neq a_0$ and $b_1\neq b_0$, and so $(a_0,b_0)\neq (a_1,b_1)$. Therefore $(a_0,b_0)$ and $(a_1,b_1)$ are two distinct BAPs relative to $(A,B)$, a contradiction to our assumption that $(a_0,b_0)$ is the unique BAP relative to $(A,B)$. This contradiction shows that $(A-a_0)\cap (B-b_0)=\{0\}$.

Conversely, suppose that $(X,\|\cdot\|)$ is strictly convex, that $(A-a_0)\cap (B-b_0)=\{0\}$ and that $B-A$ is convex. Assume, for a contradiction, that $(a_1,b_1)$ is a BAP relative to $(A,B)$ which satisfies $(a_1,b_1)\neq (a_0,b_0)$. It must be that $b_1-a_1\neq b_0-a_0$, otherwise $v:=a_1-a_0=b_1-b_0$, and so $v\in (A-a_0)\cap (B-b_0)=\{0\}$, and hence $v=0$, $b_1=b_0$ and $a_1=a_0$, a contradiction to $(a_1,b_1)\neq (a_0,b_0)$. Since both $(a_0,b_0)$ and $(a_1,b_1)$ are BAPs relative to $(A,B)$, we have  $dist(A,B)=\|b_1-a_1\|=\|b_0-a_0\|$. These equalities imply that $dist(A,B)>0$, otherwise $dist(A,B)=0$ and hence $a_0=b_0$ and $a_1=b_1$, a contradiction to $(a_0,b_0)\neq (a_1,b_1)$. The previous lines imply that  $c_1:=b_1-a_1$ and $c_0:=b_0-a_0$ are two different vectors which belong to $B-A$ and are located on the boundary of the ball of radius $dist(A,B)$ and center 0. Since $X$ is strictly convex, the middle point $c_{0.5}:=0.5c_0+0.5c_1$ of the line segment $[c_0,c_1]$ is located strictly inside this ball, namely $\|c_{0.5}\|<dist(A,B)$. On the other hand, since we assumed that $B-A$ is convex and since $c_{0.5}$ is a convex combination of points from $B-A$, we have $c_{0.5}=b'-a'$ for some $b'\in B$ and $a'\in A$. Therefore $\|c_{0.5}\|=\|b'-a'\|\geq dist(A,B)$ by the definition of $dist(A,B)$, a contradiction to the inequality $\|c_{0.5}\|<dist(A,B)$ which we showed before. This contradiction implies that  our initial assumption that there is a BAP $(a_1,b_1)$ relative to $(A,B)$ which is different from $(a_0,b_0)$ is false. Hence $(a_0,b_0)$ is the unique BAP relative to $(A,B)$. 
\end{proof}

\section{Existence}\label{sec:Existence}
In this section we present, in Theorem \bref{thm:Existence} below, many useful conditions which ensure the existence of a best approximating pair, and by doing this we significantly extend the known pool of such sufficient conditions. In particular, in some of these conditions we do not assume that $A$ and $B$ are convex. Most of these conditions are new, but some of them are known and we formulate them for the sake of completeness, and frequently provide some new information regarding them such as a new proof. In this connection, see Remark \bref{rem:Existence} below for various relevant comments, including counterexamples (Parts  \beqref{item:NoBAP}--\beqref{item:Nonreflexive}), a comparison with several published results (Parts \beqref{item:NoBAP}--\beqref{itemrem:HilbertPolytop}), and some extensions (Part \beqref{item:Dual}). 

\begin{thm}\label{thm:Existence}
Suppose that $A$ and $B$ are two nonempty subsets of a normed space $(X,\|\cdot\|)$. If at least one of the following conditions holds, then $dist(A,B)$ is attained, namely there exists at least one BAP with respect to $(A,B)$:
\begin{enumerate}[(i)]
\item\label{item:Nonempty} $A\cap B\neq \emptyset$;

\item\label{item:A-B_Proximinal0} {\bf (\cite[pp. 58--59]{Pai1974jour})} $A-B$ is proximinal with respect to $\{0\}$. Equivalently, there is a minimal norm vector in $A-B$, that is, $\inf\{\|u\|\,|\,u\in A-B\}$ is attained. 

\item\label{item:WeaklySeqCompactLocalWSC}  $A$ is weakly sequentially compact and $B$ is closed, convex and normed locally weakly sequentially compact (see Definition  \bref{def:NormedLocalWeakSequentialCompact});

\item\label{item:CompactClosedConvexLocallyCompact} $A$ is compact and $B$ is closed, convex and locally compact;

\item\label{item:WeaklySequentiallyCompactProximinal} {\bf (\cite[Theorem 4]{Xu1988jour}):} $A$ is weakly sequentially compact and $B$ is convex and proximinal with respect to $A$;

\item \label{item:CompactProximinal} {\bf \cite[Corollary 1]{Xu1988jour}} $A$ is compact and $B$ is proximinal with respect to $A$;

\item \label{item:BoundedlyCompactProximinal} $A$ is boundedly compact and $B$ is bounded and proximinal with respect to $A$;

\item \label{item:BoundedlyCompactBounded} {\bf \cite[p. 1138]{Narang1976jour}, \cite[p. 322]{Xu1988jour}} $A$ and $B$ are boundedly compact and one of them is bounded;

\item\label{item:WeaklySequentiallyCompact} $A$ and $B$ are weakly sequentially compact;  

\item\label{item:compact} {\bf (classic,  \cite[p. 123]{Nicolescu1938jour})} $A$ and $B$ are compact; 
  
\item\label{item:DweaklyCompact}  For all closed balls $D$ in $X^2$ about the origin the intersection $D\cap (A\times B)$ is weakly sequentially compact, and either $A\cup B$ is bounded or it is unbounded and the coercivity condition  \beqref{eq:coercive} holds;

\item\label{item:WSC_a_k_Bounded} $X$ is a reflexive Banach space, $A$ and $B$ are weakly sequentially closed, and there is at least one distance minimizing sequence $((a_k,b_k))_{k\in\N}$ such that $(a_k)_{k\in\N}$  has a bounded subsequence;

\item\label{item:WSC-A_is_Bounded} $X$ is a reflexive Banach space, $A$ is weakly sequentially compact (alternatively, bounded and weakly sequentially closed), and $B$ is weakly sequentially closed;

\item\label{item:XuCor2} {\bf (\cite[Corollary 2]{Xu1988jour}):} $X$ is a reflexive Banach space, $A$ is bounded and weakly closed, and $B$ is closed and convex; 

\item\label{item:OneBounded}{\bf \cite[Theorem 1.1]{Stiles1965b-jour}} $X$ is a reflexive Banach space, both $A$ and $B$ are convex and closed, and $A$ is bounded;

\item\label{item:B_D_coercive} $X$ is a reflexive Banach space, $A$ and $B$ are weakly sequentially closed, the union $A\cup B$ is unbounded, and the coercivity condition \beqref{eq:coercive} holds;

\item\label{item:ClosedConvexCoercive}  $X$ is a reflexive Banach space, both $A$ and $B$ are convex and closed, $A\cup B$ is unbounded and the coercivity condition \beqref{eq:coercive} holds;

\item\label{item:A-B_is_WSC} $X$ is a reflexive Banach space and $A-B$ is weakly sequentially closed.

\item\label{item:A-B_closed} $X$ is a reflexive Banach space, $A$ and $B$ are convex, and $A-B$ is closed;

\item \label{item:SumWeaklyClosedCompact} $X$ is a reflexive Banach space, $A=\wt{A}+\wh{A}$ and $B=\wt{B}+\wh{B}$, where both $\wt{A}$ and $\wt{B}$ are weakly sequentially compact, and $\wh{A}-\wh{B}$ is weakly sequentially closed;

\item\label{item:AisComplementedPi(B)Closed} $X$ is a reflexive Banach space, $A$ is a closed affine subspace with a closed linear part $\wt{A}$ which is complemented by a closed linear subspace $F$, and $B$ is an affine subspace with a linear part $\wt{B}$ such that $\Pi_F(\wt{B})$ is closed;

\item\label{item:B_finite_dim} $X$ is a reflexive Banach space, $A$ is a closed affine subspace, and $B$ is a finite-dimensional affine subspace;

\item \label{item:A_finite_codim} $X$ is a reflexive Banach space, $A$ is a closed affine  subspace of finite codimension, and $B$ is an affine subspace;

\item \label{item:DualWeakStar} $X$ is the dual of a separable Banach space $Y$ (e.g., $X=L_{\infty}(\Omega)$, $Y=L_1(\Omega)$, where $\Omega$ is a Lebesgue measurable subset of a Euclidean space), $A$ is weak-star sequentially compact, and $B$ is weak-star sequentially closed.

\item\label{item:HilbertPolytop} {\bf (implicit in  \cite{BauschkeBorweinLewis1997inproc})} $X$ is a real Hilbert space  and both $A$ and $B$ are polyhedral;

\item\label{item:VoronoiHilbert} $X$ is a Hilbert space,  $A\subseteq X$ is weakly sequentially closed, $p\in X\backslash A$, and $B$ is the Voronoi cell of $P:=\{p\}$ with respect to $A$, i.e., $B:=\{z\in X\,|\,\|z-p\|\leq d(z,A)\}$;

\item \label{item:Hyperparaboloid} $X$ is a Hilbert space, $A$ is a closed hyperplane, $p\in X\backslash A$, and $B$ is the full hyperparaboloid induced by $p$ and $A$, that is, the set of all points in $X$ whose distance to $p$ is not greater than their distance to $A$;  

\item\label{item:Hypercylinders} $X$ is a reflexive Banach space and both $A$ and $B$ are generalized hypercylinders with weakly sequentially compact bases.

\item \label{item:FiniteDimAffine} $A$ and $B$ are finite-dimensional affine subsapces;

\item\label{item:FiniteDimCoercive} $X$ is finite-dimensional, $A$ and $B$ are closed, and either $A\cup B$ is bounded or $A\cup B$ is unbounded and the coercivity condition \beqref{eq:coercive} holds;

\item\label{item:VoronoiFiniteDim} $X$ is finite-dimensional, $A$ is closed, $\emptyset\neq P\subset X$ is bounded, and $B$ is the Voronoi cell of $P$ with respect to $A$, namely $B:=\{z\in X\,|\,d(z,P)\leq d(z,A)\}$;

\item\label{item:polytop} {\bf (\cite[Theorem 5]{CheneyGoldstein1959jour}, \cite[the Theorem on p. 209]{Willner1968jour})} $X$ is a finite-dimensional Euclidean space and both $A$ and $B$ are polyhedral.

\end{enumerate}
\end{thm}

\begin{proof}
In what follows $((a_k,b_k))_{k\in \N}$ is a distance minimizing sequence in $A\times B$. 
\begin{enumerate}[(i)]
\item Since $A\cap B\neq \emptyset$ there is $b:=a\in A\cap B$, and hence $0=\|a-b\|=dist(A,B)$ and $(a,b)$ is a BAP relative $(A,B)$.

\item By our assumption there is some $v\in A-B$ such that $\|v\|=\inf\{\|u\|\,|\,u\in A-B\}$, and so there is a pair $(a,b)\in A\times B$ such that $\|a-b\|=\inf\{\|u\|\,|\,u\in A-B\}$. It is immediate to verify that  $dist(A,B)=d(0,A-B)=\inf\{\|u\|\,|\,u\in A-B\}$. Therefore $dist(A,B)=\|a-b\|$ and $(a,b)$ is a BAP relative to $(A,B)$.

\item Since $A$ is weakly sequentially compact  there is an infinite subset $N_1\subseteq \N$ and a point $a\in A$ which satisfy $a=(w)\lim_{k\to\infty, k\in N_1}a_k$. Since any weakly convergent sequence is bounded  \cite[II.3.27, p. 68]{DunfordSchwartz1958book}, it follows that $(a_k)_{k\in N_1}$ is bounded, and therefore, by Lemma \bref{lem:(Un)BoundedMinimizing}\beqref{item:BoundedOrUnbounded}, also $(b_k)_{k\in N_1}$ is bounded. Thus (Lemma \bref{lem:WeaklyBolzanoWeierstrass})  $b=(w)\lim_{k\to\infty, k\in N_2}b_k$ for some infinite subset $N_2\subseteq N_1$ and $b\in B$. Hence $(a,b)=(w)\lim_{k\to\infty, k\in N_2}(a_k,b_k)$ and Lemma \bref{lem:(Un)BoundedMinimizing}\beqref{item:WeaklyLSC} implies that $(a,b)$ is a BAP relative to $(A,B)$.

\item This is an immediate consequence of Part \beqref{item:WeaklySeqCompactLocalWSC} because a compact subset is sequentially compact and hence (strong convergence implies weak convergence) also  weakly sequentially compact, and a locally compact subset is locally sequentially compact and hence also normed locally weakly sequentially compact.

\item Since $A$ is weakly sequentially compact  there exist an infinite subset $N_1\subseteq \N$  and a point $a\in A$ such that $(w)\lim_{k\to\infty, k\in N_1}a_{k}=a$. We claim that $d(a,B)=dist(A,B)$. Indeed, consider the function $g:X\to [0,\infty)$ defined for all $x\in X$ by $g(x):=d(x,B)$. As is well known, $g$ is continuous (even Lipschitz continuous \cite[p. 19]{Jameson1974book}), and it is also convex since $B$ is convex \cite[Examples 5.18(b), p. 66]{VanTiel1984book}. Hence $g$ is weakly lower semicontinuous \cite[Corollary 3.9, p. 61]{Brezis2011book}. In addition, since $(b_k)_{k\in\N}$ is in $B$, we have $d(a_{k},B)\leq \|a_{k}-b_{k}\|$ for all $k\in N_1$ by the definition of $d(a_{k},B)$. Hence $d(a,B)=g(a)\leq \liminf_{k\to\infty, k\in N_1}g(a_{k})=\liminf_{k\to\infty, k\in N_1}d(a_{k},B)\leq \liminf_{k\to\infty}\|a_{k}-b_{k}\|=dist(A,B)$. On the other hand $dist(A,B)\leq d(a,B)$  because $a\in A$. Thus $d(a,B)=dist(A,B)$. Since $B$ is proximinal with respect to $A$ there is $b\in B$ such that $d(a,B)=\|a-b\|$. Therefore $\|a-b\|=dist(A,B)$ and so $(a,b)$ is a BAP relative to $(A,B)$.

\item From the compactness of $A$ there are $a\in A$ and an infinite subset $N_1\subseteq \N$ such that $\lim_{k\to\infty, k\in N_1}a_k=a$. Since $B$ is proximinal with respect to $A$ there is $b\in B$ such that $\|a-b\|=d(a,B)$. Since $(b_k)_{k\in \N}$ is in $B$, we have $d(a_k,B)\leq\|a_k-b_k\|$ for all $k\in \N$. Hence, because the function $g:X\to\ [0,\infty)$ defined for all $x\in X$ by $g(x):=d(x,B)$ is continuous \cite[p. 19]{Jameson1974book}), we have $\|a-b\|=d(a,B)=\lim_{k\to\infty, k\in N_1}d(a_k,B)\leq \lim_{k\to\infty, k\in N_1}\|a_k-b_k\|=dist(A,B)$, where the last equality is by the assumption that $((a_k,b_k))_{k\in \N}$ is a distance minimizing sequence. Therefore $\|a-b\|\leq dist(A,B)$, and obviously $dist(A,B)\leq \|a-b\|$ since $(a,b)\in A\times B$. Thus $(a,b)$ is a BAP relative to $(A,B)$.

\item Since $B$ is bounded, so is $(b_k)_{k\in \N}$. Hence Lemma \bref{lem:(Un)BoundedMinimizing}\beqref{item:BoundedOrUnbounded} ensures that $(a_k)_{k\in\N}$ is bounded too. Let $C$ be a closed ball which contains both $(a_k)_{k\in \N}$ and $(b_k)_{k\in \N}$. Since $A$ is boundedly compact, $A\cap C$ is compact. Hence there are $a\in A\cap C$ and an infinite subset $N_1\subseteq \N$ such that $\lim_{k\to\infty, k\in N_1}a_k=a$. From now on we continue word for word as in the proof of Part \beqref{item:CompactProximinal} and conclude the existence of a BAP $(a,b)$ relative to $(A,B)$. 

\item Suppose that $A$ is bounded. The proof is similar if $B$ is bounded. Then $(a_k)_{k\in\N}$ is bounded, and hence, as follows from Lemma \bref{lem:(Un)BoundedMinimizing}\beqref{item:BoundedOrUnbounded}, also $(b_k)_{k\in\N}$ is bounded. Thus there is a closed ball $C$ such that both $(a_k)_{k\in\N}$ and $(b_k)_{k\in\N}$ are in $C$, and since both $A$ and $B$ are boundedly compact, the intersections $A\cap C$ and $B\cap C$ are compact. Thus $(A\cap C)\times (B\cap C)$ is a compact subset of $X^2$ which contains  $((a_k,b_k))_{k\in\N}$, and so there are  $(a,b)\in (A\cap C)\times (B\cap C)$ and an infinite subset $N_1\subseteq \N$ such that $\lim_{k\to\infty, k\in N_1}(a_k,b_k)=(a,b)$. Since the norm is continuous and since $((a_k,b_k))_{k\in\N}$ is a distance minimizing sequence, we have $\|a-b\|=\lim_{k\to\infty, k\in N_1}\|a_k-b_k\|=dist(A,B)$, and so $(a,b)$ is a BAP relative to $(A,B)$. 

\item Since $A$ and $B$ are nonempty and weakly sequentially compact, so is their product $A\times B$, and so there is some $(a,b)\in A\times B$ which is the weak limit of a subsequence of $((a_k,b_k))_{k\in\N}$, that is $(a,b)=(w)\lim_{k\to\infty, k\in N_1}(a_k,b_k)$ for some infinite subset $N_1\subseteq \N$. Hence Lemma \bref{lem:(Un)BoundedMinimizing}\beqref{item:WeaklyLSC} implies that $(a,b)$ is a BAP relative to $(A,B)$.

\item  This is a consequence of Part \beqref{item:WeaklySequentiallyCompact} because any compact set is also sequentially compact and hence (strong convergence implies weak convergence)  weakly sequentially compact. Alternatively, one can show directly, using the continuity of the norm, that any accumulation point of $((a_k,b_k))_{k\in\N}$ (which exists because of the compactness of $A\times B$)  is a BAP relative to $(A,B)$.

\item Since either $A\cup B$ is bounded, or $A\cup B$ is unbounded and the coercivity condition \beqref{eq:coercive} holds, Lemma \bref{lem:(Un)BoundedMinimizing}\beqref{item:BoundedSequences} implies that $(a_k)_{k\in\infty}$ and $(b_k)_{k\in\N}$ are bounded. Hence  $((a_k,b_k))_{k\in\N}$ is contained in some closed ball $D$ of $X^2$ about the origin. Since $((a_k,b_k))_{k\in\N}$ is contained in $A\times B$, we conclude that $((a_k,b_k))_{k\in\N}$ is contained in  $C:=D\cap (A\times B)$, which is a weakly sequentially compact subset by the assumption in the formulation of this part. Hence there is a pair $(a,b)\in C$ and an infinite subset $N_1\subseteq \N$ such that $(a,b)=(w)\lim_{k\to\infty, k\in N_1}(a_k,b_k)$. Consequently, Lemma \bref{lem:(Un)BoundedMinimizing}\beqref{item:WeaklyLSC} implies that $(a,b)$ is a BAP relative to $(A,B)$.

\item Let $((a_k,b_k))_{k\in \N}$ be a distance minimizing sequence with the property that $(a_k)_{k\in \N}$ has a bounded subsequence $(a_k)_{k\in N_1}$ for some infinite subset $N_1\subseteq \N$. Since $X$ is reflexive, any bounded sequence in it has a weakly convergent subsequence  \cite[Theorem II.3.28, p. 68]{DunfordSchwartz1958book}. Hence there is some $a\in X$ and an infinite subset $N_2$ of $N_1$ such that $a=(w)\lim_{k\to\infty, k\in N_2}a_k$. Since $A$ is weakly sequentially closed, we have $a\in A$. Since $(a_k)_{k\in N_2}$ is bounded, also $(b_k)_{k\in N_2}$ is bounded by Lemma \bref{lem:(Un)BoundedMinimizing}\beqref{item:BoundedOrUnbounded}. Hence the reflexivity of $X$ implies that there is some $b\in X$ and an infinite subset $N_3$ of $N_2$ such that $b=(w)\lim_{k\to\infty, k\in N_3}b_k$. Thus $(a,b)=(w)\lim_{k\to\infty, k\in N_3}(a_k,b_k)$ and $b\in B$ since $B$ is weakly sequentially closed. Hence Lemma \bref{lem:(Un)BoundedMinimizing}\beqref{item:WeaklyLSC} implies that $(a,b)$ is a BAP relative to $(A,B)$. 

\item Since $A$ is weakly sequentially compact, it must be bounded (otherwise there is some sequence $(x_k)_{k\in\N}$ in $A$ such that $\lim_{k\to\infty}\|x_k\|=\infty$, and hence $(x_k)_{k\in\N}$ cannot have a weakly convergent subsequence since any weakly convergent  subsequence is bounded \cite[II.3.27, p. 68]{DunfordSchwartz1958book}; thus not every sequence in $A$ has a convergent subsequence, in contradiction with the assumption that $A$ is weakly sequentially compact). In addition, as a weakly sequentially compact subset, $A$ is evidently weakly sequentially closed. Hence for every distance minimizing sequence $((a_k,b_k))_{k\in\N}$, the sequence $(a_k)_{k\in \N}$ is automatically bounded. Thus the assertion follows from Part \beqref{item:WSC_a_k_Bounded}. 

\item Since $A$ is weakly closed and bounded and since $X$ is reflexive, $A$ is weakly compact \cite[Corollary V.4.8, p. 415]{DunfordSchwartz1958book}, and hence weakly sequentially compact since any weakly compact subset of a normed space is weakly sequentially compact \cite[Corollary in Section 18A, p. 146]{Holmes1975book}. Since $B$ is closed and convex, it is weakly closed  \cite[Theorem V.3.13, p. 422]{DunfordSchwartz1958book}, and so weakly sequentially closed. Thus the assertion follows from  Part \beqref{item:WSC-A_is_Bounded}.

\item The result follows from either Part \beqref{item:XuCor2} or Part \beqref{item:WSC-A_is_Bounded} because any nonempty, closed and convex subset is weakly closed and hence weakly sequentially closed, and any nonempty, closed, convex and bounded  subset of a reflexive Banach space is weakly compact and hence weakly sequentially compact. 

\item Because $A\cup B$ is unbounded and the coercivity condition \beqref{eq:coercive} holds, we conclude from Lemma \bref{lem:(Un)BoundedMinimizing}\beqref{item:BoundedSequences} that $(a_k)_{k\in\infty}$ and $(b_k)_{k\in\N}$ are bounded for every distance minimizing sequence $((a_k,b_k))_{k\in\N}$. The assertion now follows from Part \beqref{item:WSC_a_k_Bounded}.

\item The assertion follows from Part \beqref{item:B_D_coercive} because any closed and convex subset of a Banach space is weakly closed and hence weakly sequentially closed. 

\item Since $\lim_{k\to\infty}\|a_k-b_k\|=dist(A,B)<\infty$, if we denote $z_k:=a_k-b_k$ for every $k\in \N$, then $(z_k)_{k\in\N}$ is bounded and hence (because $X$ is reflexive) $(w)\lim_{k\to\infty, k\in N_1}z_k=z$ for some infinite subset $N_1\subseteq \N$ and some $z\in X$. Because $A-B$ is weakly sequentially closed and  $(z_k)_{k\in\N}$ is in $A-B$, we have $z\in A-B$. Thus $z=a-b$ for some $(a,b)\in A\times B$. In addition, since the norm is weakly sequentially lower semicontinuous  \cite[II.3.27, p. 68]{DunfordSchwartz1958book}, we have $\|a-b\|=\|z\|\leq\liminf_{k\to\infty, k\in N_1}\|z_k\|=dist(A,B)$. Since $(a,b)\in A\times B$, we obviously have $dist(A,B)\leq \|a-b\|$. Hence $\|a-b\|=dist(A,B)$ and $(a,b)$ is a BAP relative to $(A,B)$.

\item By the assumptions on $A$ and $B$ we see that $A-B$ is closed and convex. Hence $A-B$ is weakly closed and therefore weakly sequentially closed.  Since $X$ is reflexive, the assertion follows from Part \beqref{item:A-B_is_WSC}.

\item Since $A=\wt{A}+\wh{A}$ and $B=\wt{B}+\wh{B}$, an immediate verification shows that $A-B=(\wt{A}-\wt{B})+(\wh{A}-\wh{B})$. Hence by Part \beqref{item:WSC-A_is_Bounded} it is sufficient to show that $\wt{A}-\wt{B}$ is weakly sequentially compact because we already assume that $\wh{A}-\wh{B}$ is weakly sequentially closed. This is immediate because both $\wt{A}$ and $\wt{B}$ are weakly sequentially compact (hence if $(x_k-y_k)_{k\in\N}$ is an arbitrary sequence in $\wt{A}-\wt{B}$ where $(x_k)_{k\in\N}$ is in $\wt{A}$ and $(y_k)_{k\in\N}$ is in $\wt{B}$, then we can find infinite subsets $N_2\subseteq N_1\subseteq \N$ and points $x\in \wt{A}$ and $y\in \wt{B}$ such that $x=(w)\lim_{k\to\infty, k\in N_1}x_k$ and $y=(w)\lim_{k\to\infty, k\in N_2}y_k$; thus $x-y=(w)\lim_{k\to\infty, k\in N_2}(x_k-y_k)$, namely $(x_k-y_k)_{k\in \N}$ has a subsequence which converges to a point in $\wt{A}-\wt{B}$). 

\item Our goal is to use Part \beqref{item:A-B_closed}. Since $A$ and $B$ are affine and hence convex, it remains  to show that $A-B$ is closed. We can write $A=p_1+\wt{A}$ and $B=p_2+\wt{B}$ for some $p_1,p_2\in X$ and linear subspaces $\wt{A}$ and $\wt{B}$ of $X$. Since $A-B=(p_1-p_2)+(\wt{A}-\wt{B})$, it is sufficient to show that $\wt{A}-\wt{B}$ is closed. 

We claim that $\wt{A}-\wt{B}=\wt{A}\oplus\Pi_F(\wt{B})$. Indeed, let $z\in \wt{A}-\wt{B}$ be arbitrary. Then $z=x-y$ for some $x\in \wt{A}$ and $y\in \wt{B}$. From our assumption that $X=\wt{A}\oplus F$ we can write $y=y_1+y_2$, where $y_1=\Pi_{\wt{A}}(y)\in \wt{A}$ and $y_2=\Pi_F(\wt{B})\in F$. Because $\wt{A}$, as a linear subspace, is closed under sums, we have $x-y_1\in A$. In addition, $\Pi_F(\wt{B})=-\Pi_F(\wt{B})$ since $\Pi_F(\wt{B})$ is a linear subspace. Hence $-y_2\in \Pi_F(\wt{B})$ and $x-y=(x-y_1)+(-y_2)\in \wt{A}\oplus\Pi_F(\wt{B})$. Since $z\in \wt{A}-\wt{B}$ was arbitrary, we have $\wt{A}-\wt{B}\subseteq \wt{A}+\Pi_F(\wt{B})$. Because  $\Pi_F(\wt{B})\subseteq F$ and $\wt{A}\cap F=\{0\}$, we actually have $\wt{A}+\Pi_F(\wt{B})=\wt{A}\oplus \Pi_F(\wt{B})$. Now let $z\in \wt{A}\oplus \Pi_F(\wt{B})$ be arbitrary. Then $z=x+w$ for some (unique) $x\in \wt{A}$ and $w\in \Pi_F(\wt{B})$. Since  $w\in \Pi_F(\wt{B})$, there is some $y\in \wt{B}$ such that $w=\Pi_F(y)$.  Hence $z=x+w=(x-\Pi_{\wt{A}}(y))+(\Pi_{\wt{A}}(y)+\Pi_F(y))=(x-\Pi_{\wt{A}}(y))+y=(x-\Pi_{\wt{A}}(y))-(-y)$. Because $\wt{A}$ and $\wt{B}$ are linear subspaces, we have $x-\Pi_{\wt{A}}(y)\in \wt{A}$ and $-y\in\wt{B}$. Therefore $z\in \wt{A}-\wt{B}$. Since $z\in \wt{A}\oplus \Pi_F(\wt{B})$ was arbitrary, we have  $\wt{A}\oplus\Pi_F(\wt{B})\subseteq \wt{A}-\wt{B}$, as required. 

We claim that $A\oplus \Pi_F(\wt{B})$ is a closed subset of $X$. Indeed, let $(z_k)_{k\in\N}$ be any convergent sequence in $\wt{A}\oplus\Pi_F(\wt{B})$, and let $z\in X$ be its limit. Then for all $k\in \N$, one has $z_k=x_k+w_k$  for some (unique) $x_k\in \wt{A}$ and $w_k\in \Pi_F(\wt{B})$. Since $\wt{A}$ and $\wt{B}$ are topologically  complemented in the Banach space $X$, the linear projection $\Pi_{\wt{A}}$ is continuous \cite[Theorems 13.1, 13.2, p. 94]{Conway1990book}. Hence $\lim_{k\to\infty}x_k=\lim_{k\to\infty}\Pi_{\wt{A}}(z_k)=\Pi_{\wt{A}}(z)\in \wt{A}$. Thus $w:=\lim_{k\to\infty}w_k=\lim_{k\to\infty}(z_k-x_k)=z-\Pi_{\wt{A}}(z)$. Because $(w_k)_{k\in\N}$ is in the closed subspace $\Pi_F(\wt{B})$, its limit $w$ is in $\Pi_F(\wt{B})$. Hence $z=\Pi_{\wt{A}}(z)+w\in \wt{A}\oplus \Pi_F(\wt{B})$ and $\wt{A}\oplus \Pi_F(\wt{B})$ is closed. Since $\wt{A}-\wt{B}=A\oplus\Pi_F(\wt{B})$, also $\wt{A}-\wt{B}$ is closed, as required. 

\item Since the linear part $\wt{A}$ of $A$ is closed (because so is $A$), and since we assume that the linear part $\wt{B}$ of $B$ is finite dimensional, we conclude from \cite[Proposition 20.1, p. 195]{Jameson1974book} that $\wt{A}+\wt{B}$ is closed. Since obviously $\wt{B}=-\wt{B}$ because $\wt{B}$ is a linear subspace, we see that $\wt{A}-\wt{B}=\wt{A}+\wt{B}$ is closed, and hence so is its translated copy $A-B$. Therefore the assertion follows from Part \beqref{item:A-B_closed}. 

\item By our assumption $X=\wt{A}\oplus F$ for some finite dimensional linear subspace $F$. Therefore  $\Pi_F(\wt{B})$, which is a linear subspace of $F$,  is also finite dimensional and hence closed \cite[p. 196]{Jameson1974book}. The assertion now follows from Part \beqref{item:AisComplementedPi(B)Closed}.

\item Since $A$ is weak-star  sequentially compact, it is evidently weak-star sequentially closed. Moreover, $A$ must be bounded, otherwise there is some sequence $(x_k)_{k\in\N}$ in $A$ such that $\lim_{k\to\infty}\|x_k\|=\infty$, and hence $(x_k)_{k\in\N}$ cannot have a weak-star convergent subsequence since any weak-star convergent  subsequence is bounded \cite[Proposition 3.13(iii), p. 63]{Brezis2011book}; therefore not every sequence in $A$ has a convergent subsequence in the weak-star topology, in contradiction with the assumption that $A$ is weak-star sequentially compact.  Hence $A$ is bounded and thus for every distance minimizing sequence $((a_k,b_k))_{k\in\N}$, the sequence $(a_k)_{k\in \N}$ is automatically bounded. Therefore, as follows from Lemma \bref{lem:(Un)BoundedMinimizing}\beqref{item:BoundedOrUnbounded}, also $(b_k)_{k\in\N}$ is bounded.

Since $X$ is the dual of a separable Banach space $Y$, every bounded sequence in $X$ has a weak-star convergent subsequence: see, for instance, \cite[Corollary 3.30, p. 76]{Brezis2011book}. Hence there is some $a\in X$ and an infinite subset $N_1$ of $\N$ such that $a=(w^*)\lim_{k\to\infty, k\in N_1}a_k$, and there is an infinite subset $N_2$ of $N_1$ and $b\in X$ such that $b=(w^*)\lim_{k\to\infty, k\in N_2}b_k$. Thus $a-b=(w^*)\lim_{k\to\infty, k\in N_2}(a_k-b_k)$. Since $A$ and $V$ are weak-star sequentially closed, we have $a\in A$ and $b\in B$. Since the norm of $X$, which is the dual of the norm of $Y$, is weak-star lower semicontinuous \cite[Proposition 3.13(iii), p. 63]{Brezis2011book}, and since  $((a_k,b_k))_{k\in\N}$ is a distance minimizing sequence, we have $\|a-b\|\leq \liminf_{k\in N_2}\|a_k-b_k\|=dist(A,B)$. But $dist(A,B)\leq\|a-b\|$ because $(a,b)\in A\times B$. Hence $\|a-b\|=dist(A,B)$ and $(a,b)$ is a BAP relative to $(A,B)$. 

\item According to \cite[Corollary 3.4.8]{BauschkeBorweinLewis1997inproc}, the infimum $\sigma:=\inf\{\|z-P_BP_Az\|\,|\, z\in X\}$ is attained at some $b\in X$, namely $\sigma=\|b-P_BP_Ab\|$, where $P_A$ is the orthogonal projection on $A$ and $P_B$ is the orthogonal projection on $B$, which are well defined since $A$ and $B$ are nonempty, closed and convex. According to \cite[Corollary 4.4.3, Fact 4.4.4 and Remark 4.4.6]{BauschkeBorweinLewis1997inproc}, since $\sigma$ is attained, one has $\sigma:=0$.  Thus $\|b-P_BP_Ab\|=0$, namely $b$ is a fixed point of $P_BP_A$, and, in particular, $b\in B$. But  according to \cite[Fact 5.1.4(i)]{BauschkeBorweinLewis1997inproc}, which is actually \cite[Theorem 2]{CheneyGoldstein1959jour}, any fixed point $z$ of $P_BP_A$ satisfies $d(z,A)=dist(A,B)$. Since $d(z,A)=\|z-P_Az\|$ by the definition of $P_A$, if we let $z:=b$ and $a:=P_Ab$, then $a\in A$ and  $dist(A,B)=\|b-a\|$, that is, $(a,b)$ is a BAP relative to $(A,B)$. 

\item Since $B=\cap_{a\in A}H(p,a)$, where $H(p,a):=\{z\in X\,|\, \|z-p\|\leq \|z-a\|\}$, it follows that $B$ is an  intersection of closed real halfpsaces and hence closed and convex (since simple arithmetic shows that $H(p,a)=\{z\in X\,|\,\Rep \langle z,a-p\rangle  \leq  0.5(\|a\|^2-\|p\|^2)\}$, and this set is a real halfspace because the relation $p\notin A$ implies that $p\neq a$ for all $a\in A$). Thus $B$ is weakly closed and thus weakly sequentially closed. Therefore if $B$ is bounded, then the assertion follows from Part \beqref{item:WSC-A_is_Bounded} (where $B$ and $A$ are interchanged there). Otherwise, $B$ is unbounded and so is $A\cup B$. We claim that \beqref{eq:coercive} holds. Indeed, let $\mu>0$ be arbitrary and denote $\rho:=3(\mu+\|p\|)$. Let $(x,y)$ be an arbitrary pair in  $A\times B$ which satisfies $\|(x,y)\|>\rho$. Either $\|y\|>\mu+\|p\|$ or $\|y\|\leq \mu+\|p\|$. In the first case the  relations $y\in B$, $x\in A$ and the triangle inequality imply that $\mu<\|y\|-\|p\|\leq \|y-p\|\leq d(y,A)\leq \|y-x\|$. In the second case we must have $\|x\|>2(\mu+\|p\|)$ because otherwise $\|(x,y)\|^2=\|x\|^2+\|y\|^2\leq 5(\mu+\|p\|)^2<9(\mu+\|p\|)^2=\rho^2$, a contradiction to what we assumed on $(x,y)$. Thus the triangle inequality and the inequalities $\|y\|\leq \mu+\|p\|$ and $\|x\|>2(\mu+\|p\|)$ imply that $\mu\leq \mu+\|p\|<\|x\|-\|y\|\leq \|y-x\|$. Therefore $\mu<\|x-y\|$ whenever $(x,y)\in A\times B$ satisfies $\|(x,y)\|>\rho$. Since $\rho$ was an arbitrary positive number, the definition of the limit implies that \beqref{eq:coercive} holds. Consequently, Part \beqref{item:B_D_coercive} implies the existence of a BAP with respect to $(A,B)$. 

\item $B$ is nothing but the Voronoi cell of $P:=\{p\}$ with respect to $A$, i.e., $B:=\{z\in X\,|\, \|z-p\|\leq d(z,A)\}$. Since $A$ is closed and convex, it is weakly closed and hence weakly sequentially closed. Thus the assertion follows from Part \beqref{item:VoronoiHilbert} 

\item By our assumption $A=(C_1-c_1)+L_1$ and $B=(C_2-c_2)+L_2$ for two real lines $L_1$ and $L_2$, and two weakly sequentially compact sets $C_1\subseteq X$ and $C_2\subseteq X$ such that $L_i\cap C_i=\{c_i\}$ for some $c_i\in X$, $i\in \{1,2\}$. Since $L_i=u_i+\wt{L}_i$, where $u_i\in X$ and $\wt{L}_i$ is the linear part of $L_i$, $i\in\{1,2\}$, the linear part of $L_1-L_2$ is the linear subspace  $\wt{L}_1-\wt{L}_2$. This is a finite-dimensional linear subspace (whose dimension is at most 3) and hence a closed set \cite[p. 196]{Jameson1974book}. Thus $L_1-L_2$ is closed. Because $L_1-L_2$ is also convex, it is weakly closed and hence weakly sequentially closed. The assertion now follows from Part \beqref{item:SumWeaklyClosedCompact} since $C_1-c_1$ and $C_2-c_2$ are weakly sequentially compact as translations of weakly sequentially compact sets.

\item Since both $A$ and $B$ are finite-dimensional affine subspaces, so is their difference $A-B$. As is well known, the distance from any point $x$ in a normed space $X$ to a finite-dimensional affine subspace $F$ of $X$ is attained (this is immediate: let $(x_k)_{k\in\N}$ in $F$ satisfy $\lim_{k\to\infty}\|x-x_k\|=d(x,F)$; then $(x_k)_{k\in\N}$ is bounded, and so has a convergent subsequence since $F$ is finite-dimensional, and the limit $z$ of this subsequence satisfies $\|x-z\|=d(x,F)$ by the continuity of the norm). Hence $d(0,A-B)$ is attained and the assertion follows from Part \beqref{item:A-B_Proximinal0}.

\item Since any finite-dimensional normed space is a reflexive Banach space and since in finite-dimensional normed spaces a sequence converges weakly if and only if it converges strongly, the assertion follows from either Part \beqref{item:WSC-A_is_Bounded} (if $A\cup B$ is bounded) or Part \beqref{item:B_D_coercive} (if $A\cup B$ is unbounded).

\item The proof is somewhat similar to the proof of Part \beqref{item:VoronoiHilbert}, but because there are differences in the settings, some modifications are needed. First we observe that since the function $g:X\to \R$ defined by $g(z):=d(z,P)-d(z,A)$ for all $z\in X$ is continuous (even Lipschitz continuous) and $B$ is its 0-level-set, it follows that $B$ is closed and hence weakly sequentially closed since $X$ is finite dimensional. We also observe that since $d(z,P)=d(z,\ol{P})$ for every $z\in X$ (as follows, from instance, from Lemma \bref{lem:DistClosures}),  we have $B=\{z\in X\,|\,d(z,\ol{P})\leq d(z,A)\}$. 

If $B$ is bounded, then the assertion follows from Part \beqref{item:WSC-A_is_Bounded} (where $B$ and $A$ are interchanged there). Otherwise, $B$ is unbounded and so is $A\cup B$. We claim that \beqref{eq:coercive} holds. Indeed, let $\mu>0$ be arbitrary. Since $P$ is bounded, so is $\ol{P}$, and there is some $r>0$ such that $\ol{P}$ is contained in the ball of radius $r$ about the origin. Denote $\rho:=3(\mu+r)$ and let $(x,y)$ be an arbitrary pair in  $A\times B$ which satisfies $\|(x,y)\|>\rho$. Either $\|y\|>\mu+r$ or $\|y\|\leq \mu+r$. Suppose that the first case holds. Since $\ol{P}$ is closed and $X$ is finite dimensional, there is some $p\in \ol{P}$ (hence $\|p\|\leq r$) such that $\|y-p\|=d(y,\ol{P})$ (this also follows from Part \beqref{item:WSC-A_is_Bounded}, where $A$ there is replaced by $\{y\}$ and $B$ there is replaced by $\ol{P}$). These facts, as well as the triangle inequality and the fact that $y\in B$, all imply that $\mu<\|y\|-r\leq \|y\|-\|p\|\leq \|y-p\|=d(y,\ol{P})\leq d(y,A)\leq \|y-x\|$. Now suppose that the second case holds, that is, $\|y\|\leq \mu+r$. It must be that $\|x\|>2(\mu+r)$ because otherwise $\|(x,y)\|^2=\|x\|^2+\|y\|^2\leq 5(\mu+r)^2<\rho^2$ by the definition of $\rho$, a contradiction to what we assumed on $(x,y)$. Hence the inequalities $\|y\|\leq \mu+r$ and $\|x\|>2(\mu+r)$, as well as the triangle inequality, imply that $\mu< \mu+r<\|x\|-\|y\|\leq \|y-x\|$. Thus $\mu<\|x-y\|$ for all $(x,y)\in A\times B$ which satisfies $\|(x,y)\|>\rho$. Since $\rho$ was an arbitrary positive number, the definition of the limit implies that \beqref{eq:coercive} holds. Hence Part \beqref{item:B_D_coercive} implies that there is a BAP relative to $(A,B)$. 

\item This is just a particular case of Part  \beqref{item:HilbertPolytop}. Alternatively, since $B$ is polyhedral, also $-B$ is polyhedral, and since the sum of finite-dimensional polyhedral sets is polyhedral by \cite[Corollary 19.3.2]{Rockafellar1970book} (see also \cite[Lemma 2]{Willner1968jour}), it follows that $A-B$ is polyhedral and hence closed because a polyhedral set is closed as an intersection of closed sets. The assertion now follows from Part  \beqref{item:A-B_closed}.
\end{enumerate}
\end{proof}

                                                                                                                                        %%%%%%%%%%%%%%%%%%%%%%%%%%%%%%%%%%%%%%%%%%%%%%%%%%%%%%%%%%%%%%%%%%%%
\begin{figure}[t]
\begin{minipage}[htb]{0.44\textwidth}
\begin{center}{\includegraphics[trim=100 670 330 70, clip=true, scale=0.57]{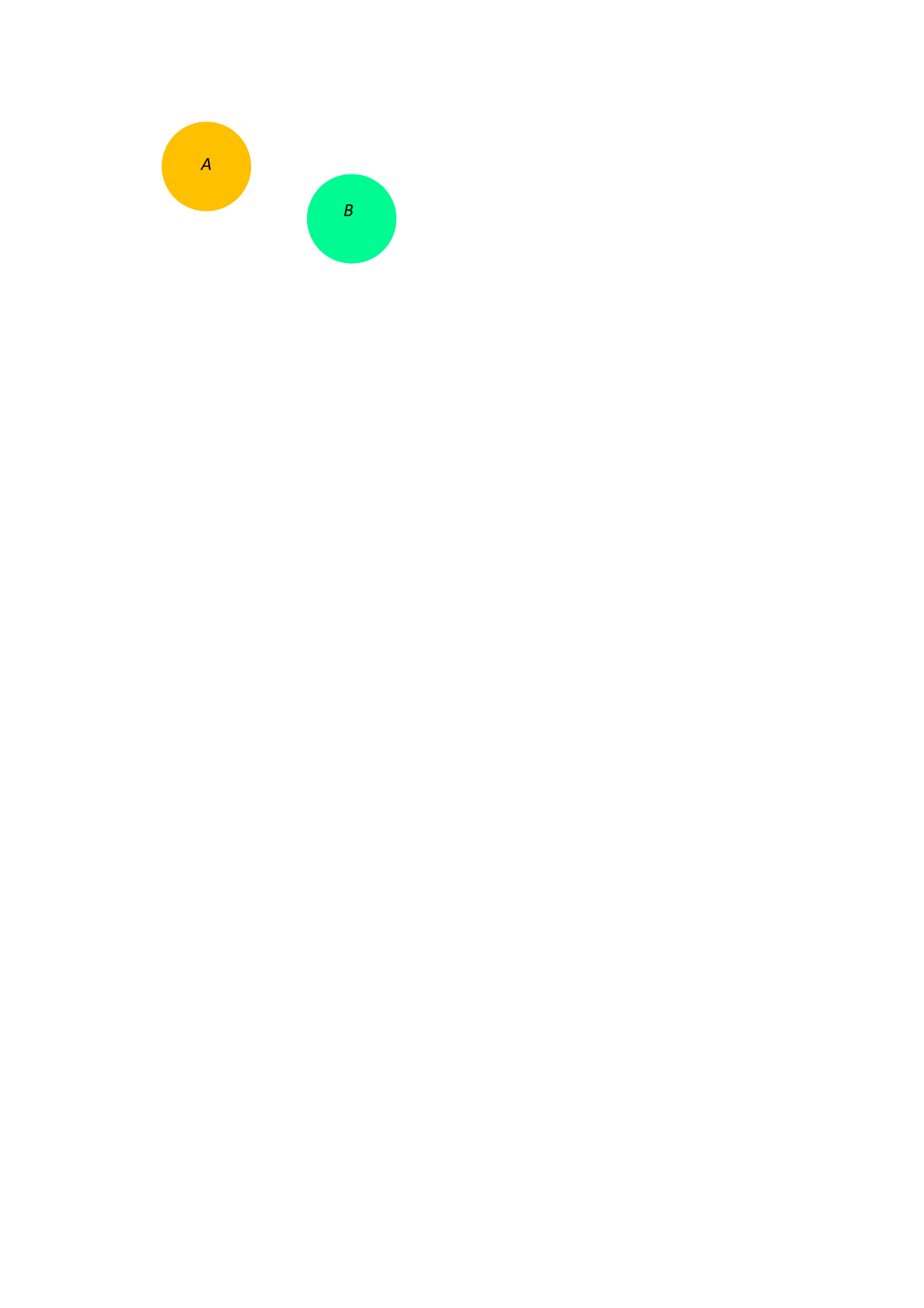}}
\end{center}
 \caption{Two open and disjoint discs in the Euclidean plane: no BAP (Remark \bref{rem:Existence}\beqref{item:NoBAP}).}
\label{fig:NoBAP-discs}
\end{minipage}
\hfill
\begin{minipage}[htb]{0.55\textwidth}
\begin{center}{\includegraphics[trim=120 350 110 180, clip=true, scale=0.39]{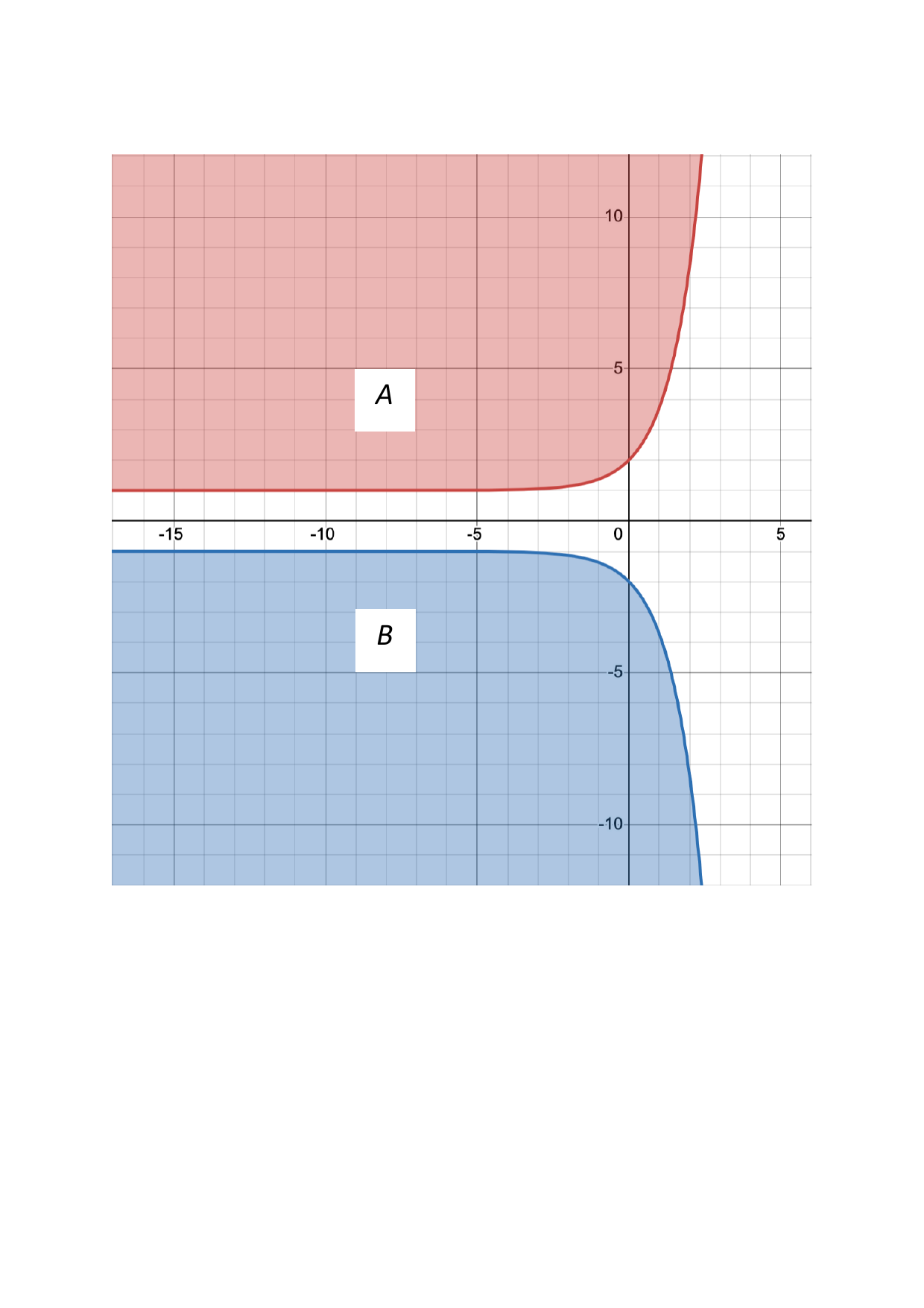}}
\end{center}
 \caption{Illustration of the second counterexample in Remark \bref{rem:Existence}\beqref{item:NoBAP}: two closed, strictly convex, boundedly compact and disjoint sets in the Euclidean plane where there is no BAP relative to them (here the coercivity condition \beqref{eq:coercive} does not hold).}
\label{fig:NoBAP-StrictlyConvexEuclidean}
\end{minipage}
\end{figure}
%%%%%%%%%%%%%%%%%%%%%%%%%%%%%%%%%%%%%%%%%%%%%%%%%%%%%%%%%%%%%%%%%%%%
\begin{remark}\label{rem:Existence}
Here are a few comments related to Theorem \bref{thm:Existence}:

\begin{enumerate}[(i)]
\item \label{item:NoBAP} In general, existence of a BAP is not ensured even in very simple settings. One example is two open and disjoint line segments in the real line, such as $A=(0,2)$ and $B=(5,10)$ and two open and disjoint discs in the Euclidean plane. (in the first case if we replace both sets by their closures, then the pair $(2,5)$ will be the unique BAP, in accordance with the combination of Corollary \bref{cor:StrictlyConvexNorm} and Theorem \bref{thm:Existence}\beqref{item:compact}). For a second counterexample, let $X$ be the Euclidean plane, $A:=\{(x_1,x_2)\in \R^2\,|\, x_2\geq e^{x_1}+1\}$ and $B:=\{(x_1,x_2)\in \R^2\,|\, x_2\leq -e^{x_1}-1\}$. Then $A$ and $B$ are closed, strictly convex and boundedly compact, but any $(a,b)\in A\times B$ satisfies $\|a-b\|>2=dist(A,B)$, and so there is no BAP relative to $(A,B)$. See Figure \bref{fig:NoBAP-StrictlyConvexEuclidean}. Of course, in this case the coercivity condition \beqref{eq:coercive} does not hold. This example shows that  \cite[Theorem 2.3, p. 385]{Singer1970book}, which claims that there is a BAP relative to $(A,B)$ whenever $A$ and $B$ are boundedly compact and closed,  is incorrect, as observed before in \cite[p. 1138]{Narang1976jour} and later in \cite[p. 322]{Xu1988jour} using different  (counter)examples. Nevertheless, by adding to \cite[Theorem 2.3, p. 385]{Singer1970book} the assumption that either $A$ or $B$ is bounded, the assertion becomes correct, as was observed without a proof in both \cite[p. 1138]{Narang1976jour} and  \cite[p. 322]{Xu1988jour} and is proved in Part \beqref{item:BoundedlyCompactBounded}. A third counterexample appears in Part \beqref{item:Nonreflexive} below. A fourth counterexample appears in \cite[Theorem 5]{Lin1966jour}, which says that in every infinite-dimensional real normed space there exist closed, convex and linearly bounded subsets $A\neq\emptyset$ and $B\neq\emptyset$ such that there is no BAP relative to $(A,B)$. 

\item\label{item:Nonreflexive} While various parts of Theorem \bref{thm:Existence} do not require the space to be reflexive, the reflexivity assumption,  which is imposed in many parts of the theorem, is not a coincidence. Indeed, in any real non-reflexive Banach space there are very simple closed, convex and disjoint subsets $A$ and $B$ such that there is no BAP relative to them. In fact, $A$ can be taken to be a singleton and $B$ can be taken to be a closed hyperplane. This  property actually characterizes real non-reflexive Banach spaces \cite[p. 253]{Phelps1960jour}, \cite[Theorem 19C(a), p. 161]{Holmes1975book}, \cite[Theorem 1]{Lin1966jour}, a result which is attributed in \cite[p. 253]{Phelps1960jour} to James, based on his celebrated theorem (in its earlier version formulated for separable spaces) which says that a real Banach space is non-reflexive if and only if there exists a linear continuous functional which does not attain a maximum on the unit sphere of the space: see \cite[Theorem 5]{James1964jour} (which discusses only the difficult direction) and \cite[Theorem 2]{James1972jour}. Closely related characterizations of real reflexive Banach spaces appear in \cite[Theorems 3 and 4]{Lin1966jour}: for instance, \cite[Theorem 4]{Lin1966jour} says that a real Banach space is reflexive if and only if for all pairs $(A,B)$ of nonempty, closed, convex and bounded subsets $A$ and $B$ of $X$ there is a BAP relative to $(A,B)$.

\item\label{itemrem:Known}  Part \beqref{item:Nonempty} is, of course, well known and is mentioned in \cite[p. 434 in Section 5]{BauschkeBorwein1994jour} and \cite[Fact 2.3(v)]{BauschkeCombettesLuke2004jour}, in the context of Hilbert spaces.  Part \beqref{item:CompactClosedConvexLocallyCompact} generalizes \cite[the assertion after Theorem 3]{Xu1988jour} (which by itself generalizes \cite[Theorem 3]{SahneySingh1980jour}, a result whose proof suffers from issues), where there $X$ is restricted to be a  Banach space and $A$ (denoted by $G$ there) is also assumed to be convex (the proof of this assertion suffers from a gap, namely the convexity of $F$ there - denoted by $B$ in Part \beqref{item:CompactClosedConvexLocallyCompact} - is crucial for the existence of a convergent subsequence in $F$, but is omitted from the proof). Part \beqref{item:WeaklySequentiallyCompactProximinal} significantly generalizes  \cite[Theorem 3.1]{Pai1974jour} and also generalizes  \cite[Theorem (8), p. 345]{Kothe1969book} because any closed, convex and weakly locally compact subset of a normed space is proximinal with respect to the whole space \cite[Theorem 
(1), p. 343]{Kothe1969book}. Part \beqref{item:WeaklySequentiallyCompact} generalizes an assertion made in \cite[between Theorem 2.4 and Definition 2.5]{DigarKosuru2020jour} in a Banach space setting. Part \beqref{item:A-B_closed} generalizes \cite[Lemma 2.1(ii) and p. 434 in Section 5]{BauschkeBorwein1994jour} (see also \cite[Theorem 5.4.3]{BauschkeBorweinLewis1997inproc}) from the case where $X$ is a real Hilbert spaces and $A$ and $B$ are closed and convex. Parts \beqref{item:B_finite_dim}--\beqref{item:A_finite_codim} combined generalize  \cite[Facts 5.1(iii)]{BauschkeBorwein1994jour} from the case where $X$ is a real Hilbert spaces and $A$ and $B$ are closed and convex. Part \beqref{item:FiniteDimCoercive} extends related existence results in the Euclidean case, such as \cite[Corollary 4.16]{CaseiroFacasVicenteVitoria2019jour}, \cite[Proposition 2.2, Corollary 2.3]{FacasVicenteGoncalvesVitoria2014jour},\cite[Proposition 2.3]{GoncalvesFacasVicenteVitoria2015jour} (without giving explicit formulae as done there; note that there a BAP is referred to as ``the'' BAP, although there can be several other BAPs).

\item Part \beqref{item:CompactProximinal} is stated  without a proof in \cite[Corollary 1]{Xu1988jour}. It is claimed there that the proof can be obtained from \cite[The proof of Theorem 4]{Xu1988jour}, but this is not very clear since \cite[The proof of Theorem 4]{Xu1988jour} is based on the convexity of $B$ (denoted by $G$ there), which is not assumed in \cite[Corollary 1]{Xu1988jour}.  

\item A more general version of Part \beqref{item:BoundedlyCompactProximinal} is claimed in \cite[Corollary 1]{Xu1988jour}, again without a proof: that $A$ is locally compact instead of being boundedly compact (the rest of the assumptions are the same as in Part \beqref{item:BoundedlyCompactProximinal}). It is not clear to us whether this statement is correct, and it might be that the author of \cite{Xu1988jour} actually meant ``boundedly compact'' instead of ``locally compact''.  The problem with the proof of \cite[Corollary 1]{Xu1988jour} is also mentioned in \cite[p. 128]{Narang1991jour}. 

\item It is claimed in \cite[the remark on page 345, after the proof of (8)]{Kothe1969book} that if $X$ is a reflexive Banach space, $A$ is closed and bounded, and $B$ is closed and convex, then there is a BAP relative to $(A,B)$. This assertion is repeated in \cite[p. 128]{Narang1991jour} in an attempt to claim that \cite[Corollary 2]{Xu1988jour} (which we proved in Part \beqref{item:XuCor2}), follows from a known and more general result. However, \cite[the remark on page 345, after the proof of (8)]{Kothe1969book} is incorrect, and a simple counterexample is $X=\ell_2$, $A=\{((k+1)/k)e_k\,|\, k\in\N\}$ and $B=\{0\}$, where $e_k$ is the $k$-th basis element, namely the $k$-th component of $e_k$ is 1 and the other components are 0. Indeed, $X$ is a Hilbert space and hence reflexive, $A$ is discrete (since the distance between any two distinct elements in $A$ is at least 2) and hence closed, and $B$ is obviously closed and convex; however, $d(A,B)=1<(k+1)/k=d(e_k,B)$ for all $k\in\N$.

\item\label{itemrem:HilbertPolytop} Part \beqref{item:HilbertPolytop} is claimed without a proof in \cite[Fact 5.1(ii)]{BauschkeBorwein1994jour}. It is said there that a proof will appear in a certain future work, but eventually that specific work neither presented the claim nor presented the proof. Moreover, as far as we know, the proof of Part \beqref{item:HilbertPolytop} has not been published elsewhere (beyond the one given here).

\item\label{item:Dual} One might wonder regarding possible extensions of Theorem \bref{thm:Existence}. This is definitely possible. For example, see \cite[Ex. 2.16, p. 52]{Brezis2011book} (as well as Part \beqref{item:A-B_closed}) regarding the case where $X$ is a reflexive Banach space, $A$ and $B$ are closed affine subspaces of $X$ with linear parts $\wt{A}$ and $\wt{B}$, respectively, such that $\wt{A}\cap \wt{B}\neq \emptyset$ and there is some $\alpha>0$ such that $d(x,\wt{A}\cap\wt{B})\leq \alpha d(x,\wt{B})$ for each $x\in \wt{A}$. As another example, see \cite[Lemma 15D, p. 104]{Holmes1975book}, combined with Part \beqref{item:A-B_closed}, for the case where $X$ is a reflexive Banach space, $A$ and $B$ are closed and convex, $B$ is locally compact, and the intersection of the recessions cones of $A$ and $B$ is $\{0\}$ (see also \cite[Facts 5.1(iv)]{BauschkeBorwein1994jour} for the real Hilbert space case, with the modification that the intersection of the recessions cones should be $\{0\}$). For additional extensions, see \cite[Theorem 7]{Lin1966jour} (normed spaces) and \cite{DamaBajracharya2018jour,Narang1976jour,Narang1983jour} (metric spaces).

\item Some of the assertions formulated in Theorem \bref{thm:Existence} hold, with essentially the same proofs, in metric spaces: these are Parts \beqref{item:Nonempty}, \beqref{item:CompactProximinal} (see also \cite[Theorem 2]{Narang1976jour}), \beqref{item:BoundedlyCompactProximinal}, \beqref{item:BoundedlyCompactBounded} and \beqref{item:compact}.

\item Given a collection $(P_k)_{k\in K}$ of nonempty subsets of the ambient space $X$, called sites or generators, the Voronoi cell $V_k$ of $P_k$ is the set $V_k:=\{x\in X\,|\, d(x,P_k)\leq d(x,A_k)\}$, where $A_k:=\cup_{j\in K\backslash\{k\}}P_j$. The collection $(V_k)_{k\in K}$ is the so-called Voronoi diagram of the given sites. Voronoi diagrams  have numerous applications in science and technology: see, for example, \cite{Aurenhammer,CSKM2013,ConwaySloane,VoronoiCVD_Review,GruberLek,
OkabeBootsSugiharaChiu2000book, ReemISVD2009proc,Reem2018jour, Reem2023jour} and the references therein, and in particular see \cite{LinManochaKim2018inbook,EhmannLin2000inproc} for applications regarding the BAP problem itself. If all the sites are closed and either $K$ is finite or $K$ is infinite and the gap between the sites is positive (namely $0<\inf\{dist(P_i,P_j)\,|\, i,j\in K, i\neq j\}$, as happens, e.g., if each site is a lattice point or a subset  located in a small neighborhood of a lattice point), then $A_k$ is closed for all $k\in K$ (in the first case this is obvious, and in the second case this follows from the fact that any sequence in $A_k$, which converges to some point in $X$, must belong to the same site $P_j$ starting from some place because of the positive gap between the sites). Therefore, if, in addition, $X$ is finite dimensional and all the sites are bounded, then Theorem \bref{thm:Existence}\beqref{item:VoronoiFiniteDim} ensures that for every $k\in K$ the distance between the Voronoi cell of $P_k$ and the union $A_k$ of the other sites is attained. 
\end{enumerate}
\end{remark}

\section{Examples}\label{sec:Examples}
This section presents several examples and counterexamples which illustrate some of the results established earlier.

\begin{example}\label{ex:RectangleEllipseEuclidean}
Let $(X,\|\cdot\|)$ be the Euclidean plane, $A:=[-2,2]\times [-2,0]$ be a rectangle, and $B:=\{(x_1,x_2)\in X\,|\, \frac{x_1^2}{4}+(x_2-2)^2\leq 1\}$ be an ellipse. See Figure \bref{fig:RectangleEllipseEuclidean}. Then $(X,\|\cdot\|)$ is strictly convex, both $A$ and $B$ are nonempty, convex and compact, and $B$ is actually strictly convex, and so, according to Corollary \bref{cor:StrictlyConvexNorm}, there is a unique BAP $(a_0,b_0)$ with respect to $(A,B)$. In fact, $a_0=(0,0)$ and $b_0=(0,1)$. 
\end{example}

\begin{example}\label{ex:RectangleEllipseLinfty}
Let $X:=\R^2$ be the plane with the $\|\cdot\|_{\infty}$ norm $\|(x_1,x_2)\|_{\infty}:=\max\{|x_1|,|x_2|\}$, $(x_1,x_2)\in X$, and let $A$ and $B$ be defined as in Example \bref{ex:RectangleEllipseEuclidean}. See Figure \bref{fig:RectangleEllipseLinfty}. Since $(X,\|\cdot\|_{\infty})$ is not strictly convex, the existence of a unique BAP with respect to $(A,B)$ is not guaranteed. Indeed, now $([a_0,a_1],[b_0,b_1])$ is a nondegenerate (but not a strictly nondegenerate) BAP of intervals with respect to $(\partial A,\partial B)$, where $a_0:=(-1,0)$, $a_1:=(1,0)$, and $b_0:=(0,1)=:b_1$, because $\|a(t)-b_0\|_{\infty}=1=dist(A,B)$ for all $t\in [0,1]$.  
\end{example}

%%%%%%%%%%%%%%%%%%%%%%%%%%%%%%%%%%%%%%%%%%%%%%%%%%%%%%%%%%%%%%%%%%%%
\begin{figure}[t]
\begin{minipage}[t]{0.46\textwidth}
\begin{center}{\includegraphics[trim=170 660 300 20, clip=true, scale=0.6]{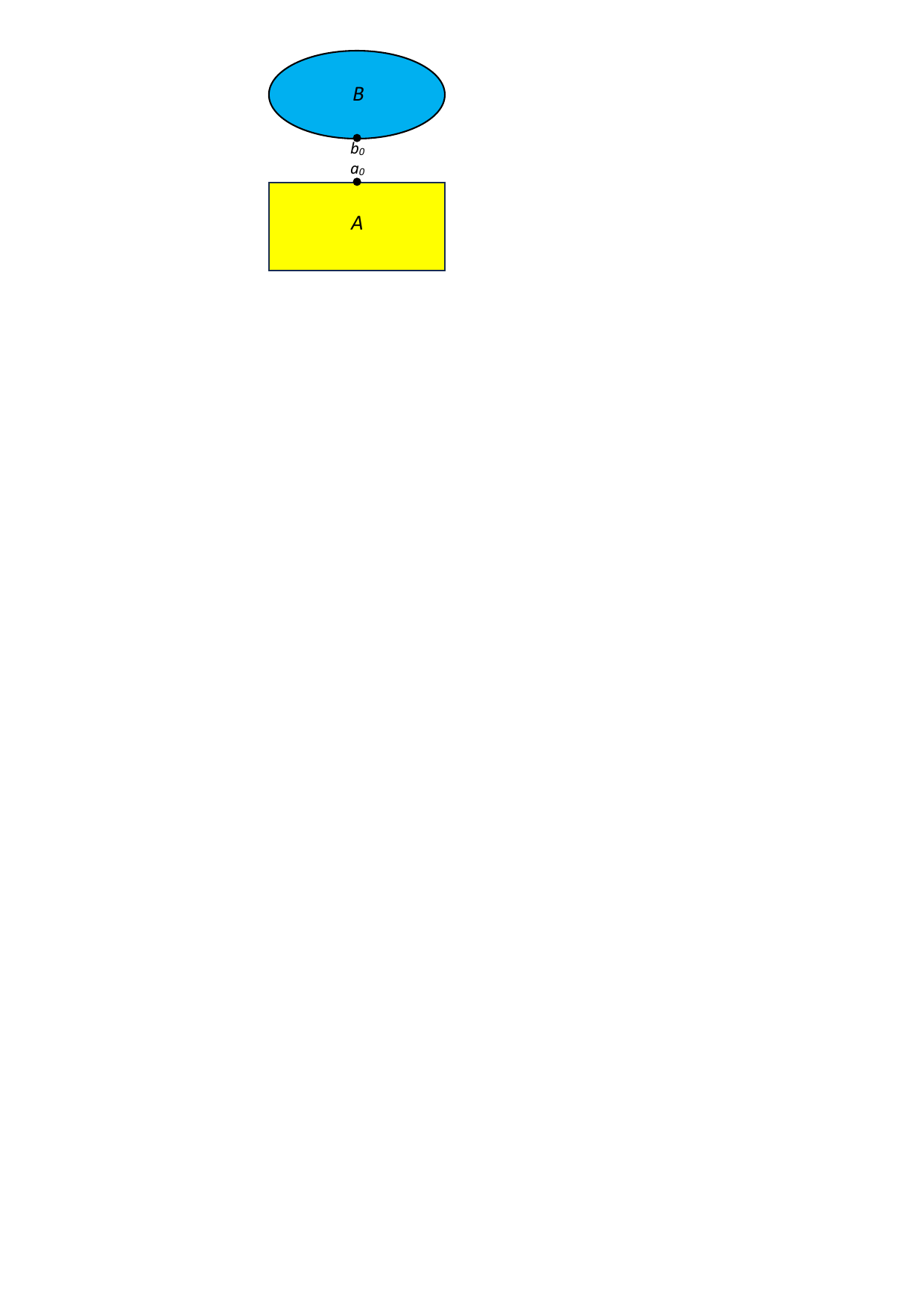}}
\end{center}
 \caption{An ellipse and a rectangle in the Euclidean plane (Example \bref{ex:RectangleEllipseEuclidean}): a unique BAP.}
\label{fig:RectangleEllipseEuclidean}
\end{minipage}
\hfill
\begin{minipage}[t]{0.49\textwidth}
\begin{center}{\includegraphics[trim=170 595 300 80, clip=true, scale=0.6]{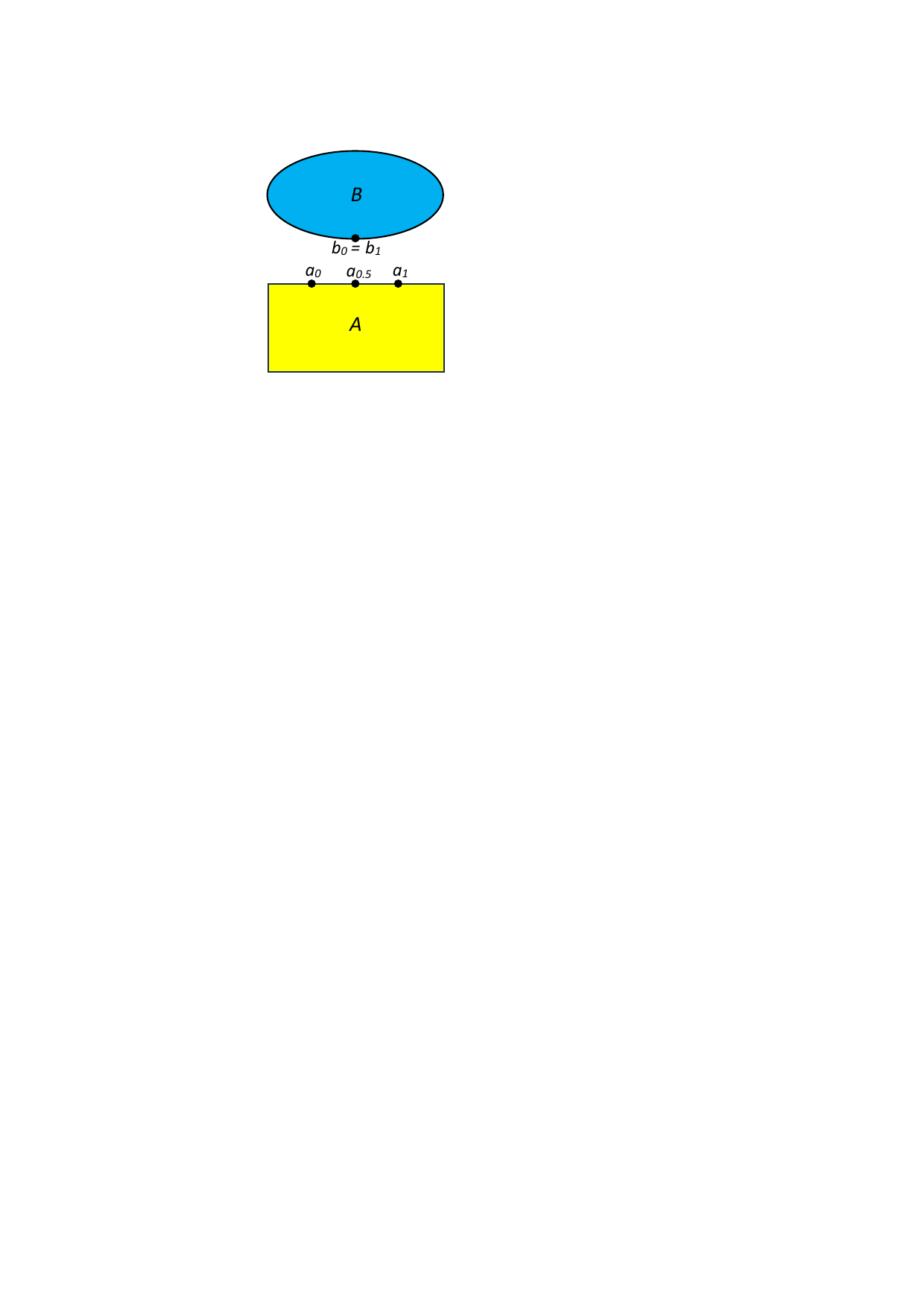}}
\end{center}
 \caption{An ellipse and a rectangle in the plane with the $\|\cdot\|_{\infty}$ norm (Example \bref{ex:RectangleEllipseLinfty}): many BAPs.}
\label{fig:RectangleEllipseLinfty}
\end{minipage}
\end{figure}
%%%%%%%%%%%%%%%%%%%%%%%%%%%%%%%%%%%%%%%%%%%%%%%%%%%%%%%%%%%%%%%%%%%%

\begin{example}\label{ex:NonParallelLinfty}
Let $X:=\R^3$ with the $\|\cdot\|_{\infty}$ norm $\|(x_1,x_2,x_3)\|_{\infty}:=\max\{|x_1|,|x_2|,|x_3|\}$, $(x_1,x_2,x_3)\in X$. Let $A:=\{(x_1,x_2,x_3)\in X\,|\, x_1\in [-1,1], x_2=0, x_3=0\}$ and $B:=\{(x_1,x_2,x_3)\in X\,|\, x_1=0, x_2\in [-1,1],  x_3=h\}$ for some fixed $h\geq 1$. Then $A$ and $B$ are nondegenerate intervals. Since any $x=(x_1,x_2,x_2)\in A$ and $y=(y_1,y_2,y_3)\in B$ satisfy $|x_1-y_1|=|x_1|\leq 1$, $|x_2-y_2|=|y_2|\leq 1$ and $|x_3-y_3|=h\geq 1$, we have $\|x-y\|_{\infty}=h$ and $dist(A,B)=h$, namely $(x,y)$ is a BAP relative to $(A,B)$ for all $(x,y)\in A\times B$. Moreover, $(A,B)$ is a strictly nondegenerate BAP of intervals with respect to $(A,B)$ although $A$ and $B$ are not parallel.   
\end{example}

\begin{example}\label{ex:EllipseCylinder}
Let $X:=\R^3$ with the Euclidean norm. Fix $\sigma_1,\sigma_2,h_1,h_2\in (0,\infty)$  and let $A$ be the elliptical cylinder defined by $A:=\{(x_1,x_2,x_3)\in X\,|\, (x_1^2/\sigma_1^2)+(x_2^2/\sigma_2^2)\leq 1, x_3\in [-h_1,0]\}$. Let $B$ be the ellipse defined by 
$B:=\{(x_1,x_2,x_3)\in X\,|\, (x_1^2/\sigma_1^2)+((x_2-\sigma_2)^2/\sigma_2^2)\leq 1, x_3=h_2\}$, namely $B$ is a translated copy of the ellipse which generates $A$. See Figure \bref{fig:EllipseCylinder}. Here $(X,\|\cdot\|)$ is strictly convex but both $A$ and $B$ are not (even though $B$ is strictly convex in the affine hull that it spans, namely when restricted to the plane $\{(x_1,x_2,x_3)\in X: x_3=h_2\}$) and indeed, $(S_1,\wt{S}_1)$ and $(S_2,\wt{S}_2)$ (see Figure \bref{fig:EllipseCylinder}) are strictly nondegenerate BAPs of intervals with respect to $(A,B)$. Of course, there are infinitely many other such pairs. 
\end{example}

\begin{example}\label{ex:TwoD}
Figure \bref{fig:TwoD} presents a two-dimensional example in which there exists at least one BAP with respect to $(A,B)$ because of Theorem \bref{thm:Existence}\beqref{item:compact}, and this pair is unique because of Theorem \bref{thm:UniquenessInNormedSpace} since there  does not exist a nondegenerate BAP of intervals with respect to $(\partial A,\partial B)$ (because if $(I_A,I_B)$ is a pair of intervals satisfying $I_A\subseteq \partial A$ and $I_B\subseteq \partial B$, then we can find a pair $(a',b')\in A\times B$ such that $\|a'-b'\|<dist(I_A,I_B)$). Nevertheless, there do exist intervals contained in $\partial A$ which are parallel to intervals contained in $\partial B$.  
\end{example}

%%%%%%%%%%%%%%%%%%%%%%%%%%%%%%%%%%%%%%%%%%%%%%%%%%%%%%%%%%%%%%%%%%%%
\begin{figure}[t]
\begin{minipage}[b]{0.47\textwidth}
\begin{center}{\includegraphics[trim=260 582 200 100, clip=true, scale=0.6]{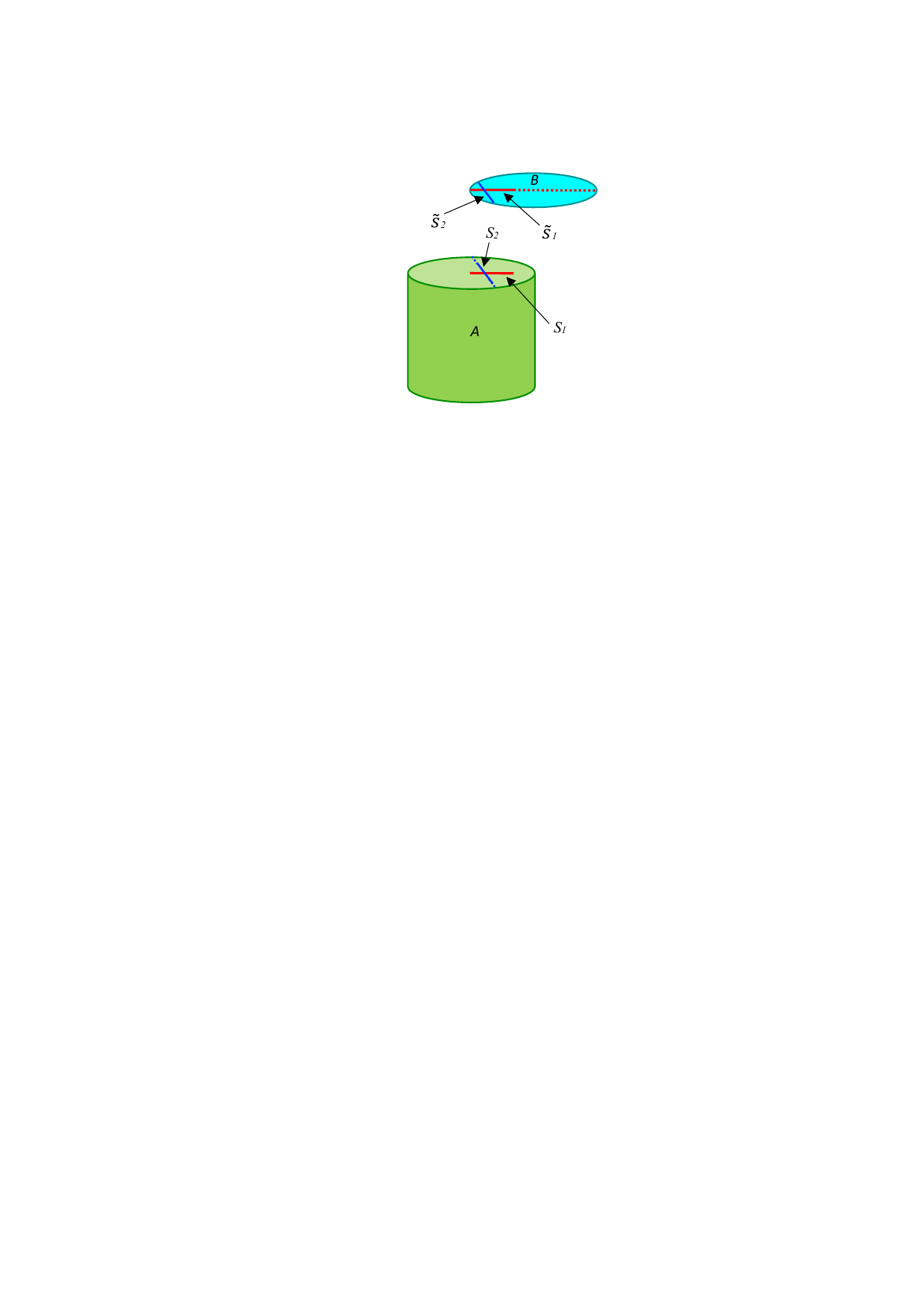}}
\end{center}
 \caption{An ellipse and a cylinder in the Euclidean space $\R^3$ (Example \bref{ex:EllipseCylinder}): two strictly nondegenerate BAPs of intervals (i.e., $(S_1,\wt{S}_1)$ and $(S_2,\wt{S}_2)$) are presented.}
\label{fig:EllipseCylinder}
\end{minipage}
\hfill
\begin{minipage}[b]{0.52\textwidth}
\begin{center}{\includegraphics[trim=110 670 200 60, clip=true, scale=0.5]{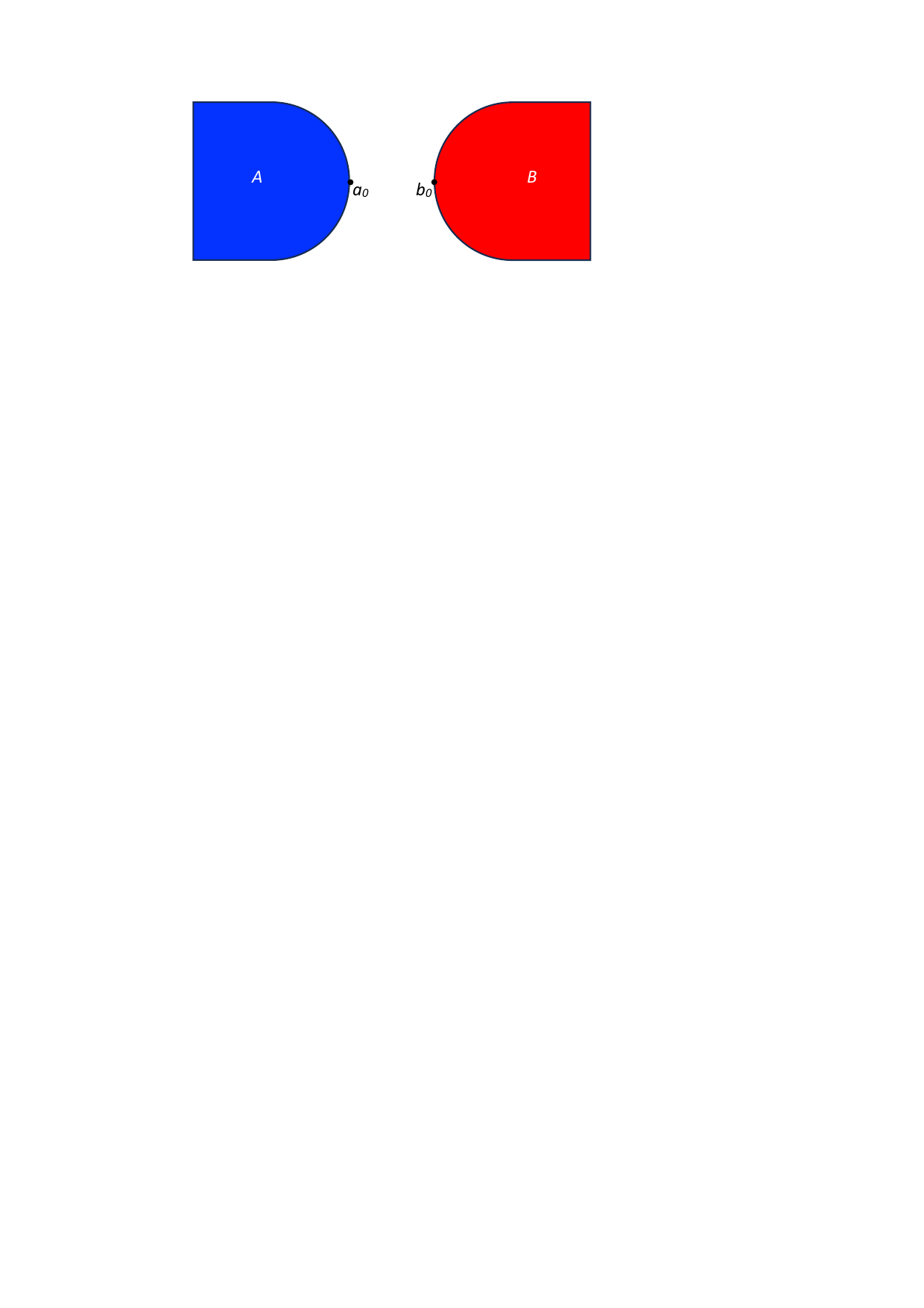}}
\end{center}
 \caption{Two shapes in the Euclidean plane whose boundaries contain parallel intervals which satisfy the conditions of Theorem \bref{thm:UniquenessInNormedSpace}, and hence induce a unique BAP (Example \bref{ex:TwoD}).}
\label{fig:TwoD}
\end{minipage}
\end{figure}

%%%%%%%%%%%%%%%%%%%%%%%%%%%%%%%%%%%%%%%%%%%%%%%%%%%%%%%%%%%%%%%%%%%%

\begin{example}\label{ex:PolygonDrop}
Figure \bref{fig:PolygonDrop} presents two compact shapes in the Euclidean plane whose boundaries contain intervals, but no interval contained in the boundary of one shape is parallel to an interval contained in the boundary of the other shape. Hence the conditions of Corollary \bref{cor:StrictlyConvexNorm} and of Theorem  \bref{thm:Existence}\beqref{item:compact} are satisfied, and thus there exists a unique BAP relative to $(A,B)$. 
\end{example}

\begin{example}\label{ex:BoundedUnbounded}
Let $(X,\|\cdot\|)$ be the plane with any norm, let $A$ be a closed and bounded subset, and let $B$ be a closed and unbounded subset of the plane as in Figure \bref{fig:BoundedUnbounded}. Here the coercivity condition \beqref{eq:coercive} holds, and existence of a BAP relative to $(A,B)$ follows from Theorem \bref{thm:Existence}\beqref{item:B_D_coercive}.
\end{example}

%%%%%%%%%%%%%%%%%%%%%%%%%%%%%%%%%%%%%%%%%%%%%%%%%%%%%%%%%%%%%%%%%%%%
\begin{figure}[t]
\begin{minipage}[htb]{0.46\textwidth}
\begin{center}{\includegraphics[trim=120 610 310 100, clip=true, scale=0.7]{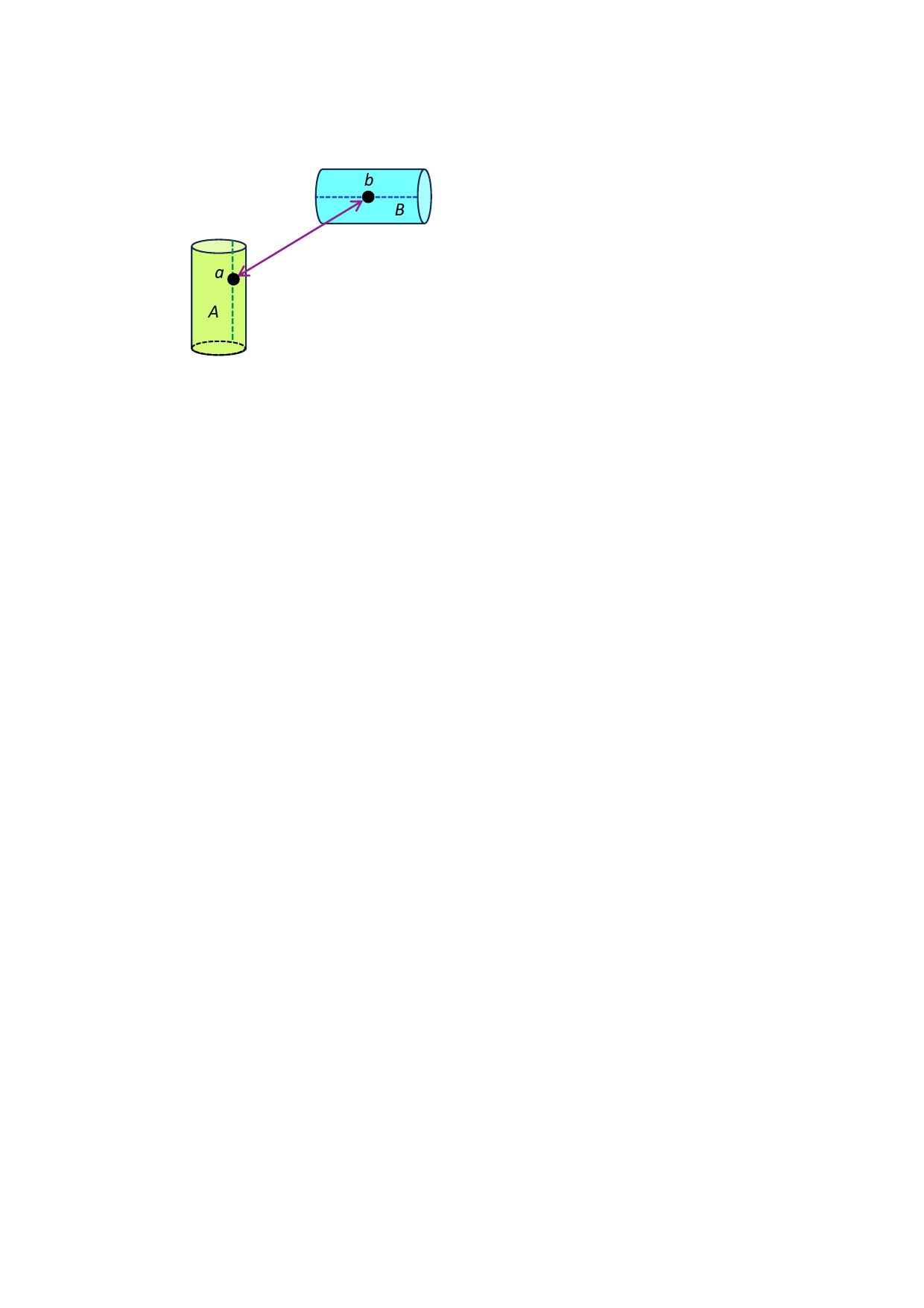}}
\end{center}
 \caption{Two non-parallel straight cylinders in the Euclidean $\R^3$ (infinite cylinders, shown partly): a unique BAP (Example \bref{ex:StraightCylinders}).}
\label{fig:ParallelStraightCylinders}
\end{minipage}
\hfill
\begin{minipage}[htb]{0.46\textwidth}
\begin{center}{\includegraphics[trim=220 675 110 85, clip=true, scale=0.7]{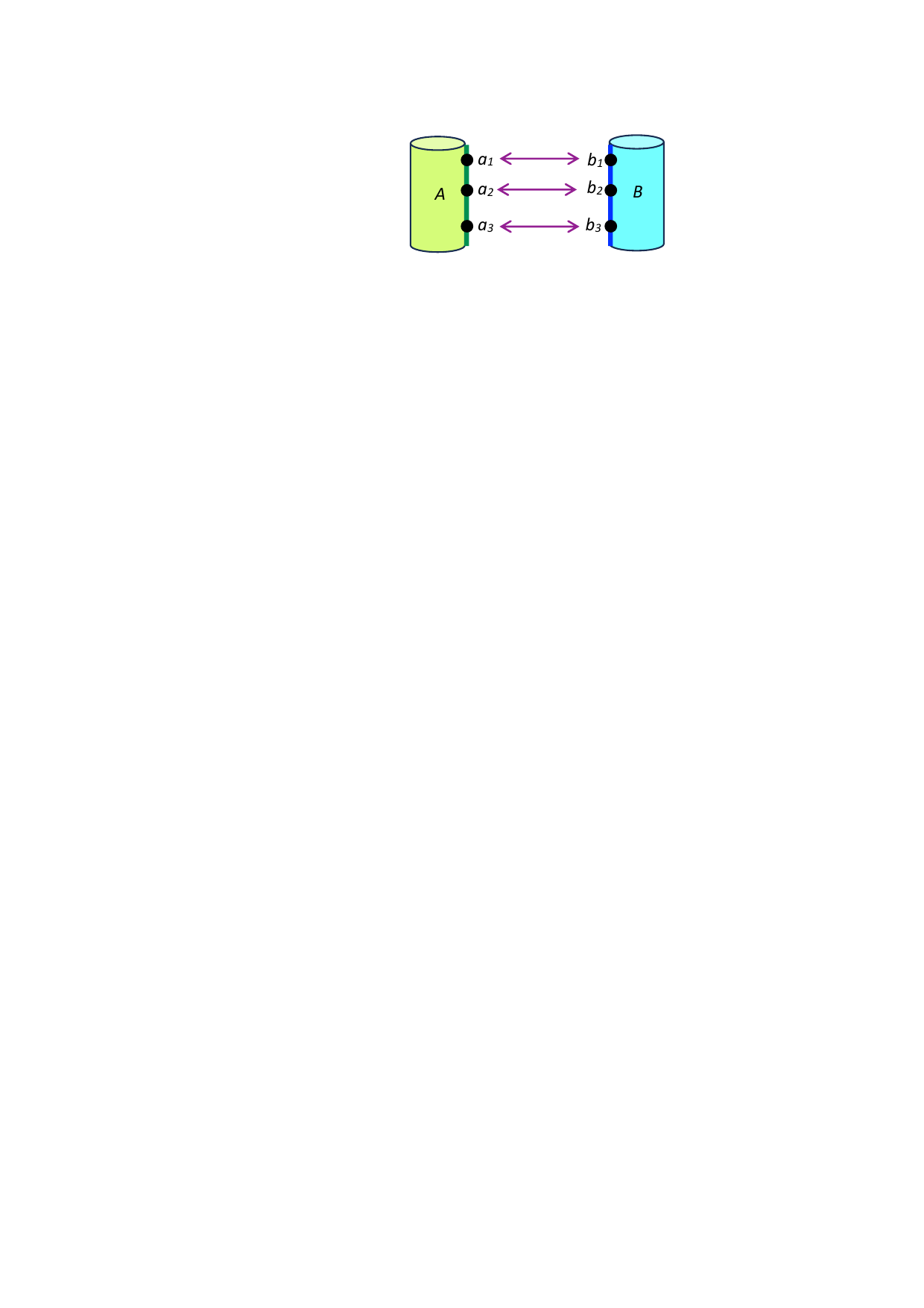}}
\end{center}
 \caption{Two parallels straight cylinders in the Euclidean $\R^3$ (infinite cylinders, shown partly): infinitely many BAPs and nondegenerate BAPs of intervals (Example \bref{ex:StraightCylinders}).}
\label{fig:NonParallelStraightCylinders}
\end{minipage}
\end{figure}
%%%%%%%%%%%%%%%%%%%%%%%%%%%%%%%%%%%%%%%%%%%%%%%%%%%%%%%%%%%%%%%%%%%%

\begin{example}\label{ex:StraightCylinders}
Let $(X,\|\cdot\|)$ be the $\R^3$ with the Euclidean norm, and let $A$ and $B$ be two straight cylinders, either parallel (Figure \bref{fig:ParallelStraightCylinders}) or non-parallel (Figure \bref{fig:NonParallelStraightCylinders}). Then Theorem  \bref{thm:Existence}\beqref{item:Hypercylinders} ensures that there exists at least on BAP with respect to $(A,B)$ (in the case of non-parallel cylinders the coercivity condition \beqref{eq:coercive} holds, and existence of a BAP relative to $(A,B)$ also follows from Theorem \bref{thm:Existence}\beqref{item:B_D_coercive}). Since the norm is strictly convex,  Corollary \bref{cor:StrictlyConvexNorm} ensures the uniqueness of the BAP in the case of non-parallel cylinders.   
\end{example}

\begin{example}\label{ex:GeneralizedCylinders}
Consider $\R^3$ with any norm. Let $A$ and $B$ be two disjoint and generalized (hyper)cylinders as in Figure \bref{ex:GeneralizedCylinders}, with axes $L_A$ and $L_B$ and bases $C_A$ and $C_B$, respectively. Then Theorem  \bref{thm:Existence}\beqref{item:Hypercylinders} ensures that there exists at least one BAP with respect to $(A,B)$: see Figure \bref{fig:GeneralizedCylindersNonParallel}. 
\end{example}

%%%%%%%%%%%%%%%%%%%%%%%%%%%%%%%%%%%%%%%%%%%%%%%%%%%%%%%%%%%%%%%%%%%%
%trim={left bottom right top}
\begin{figure}[t]
\begin{minipage}[htb]{0.48\textwidth}
\begin{center}{\includegraphics[trim=140 580 70 85, clip=true, scale=0.5]{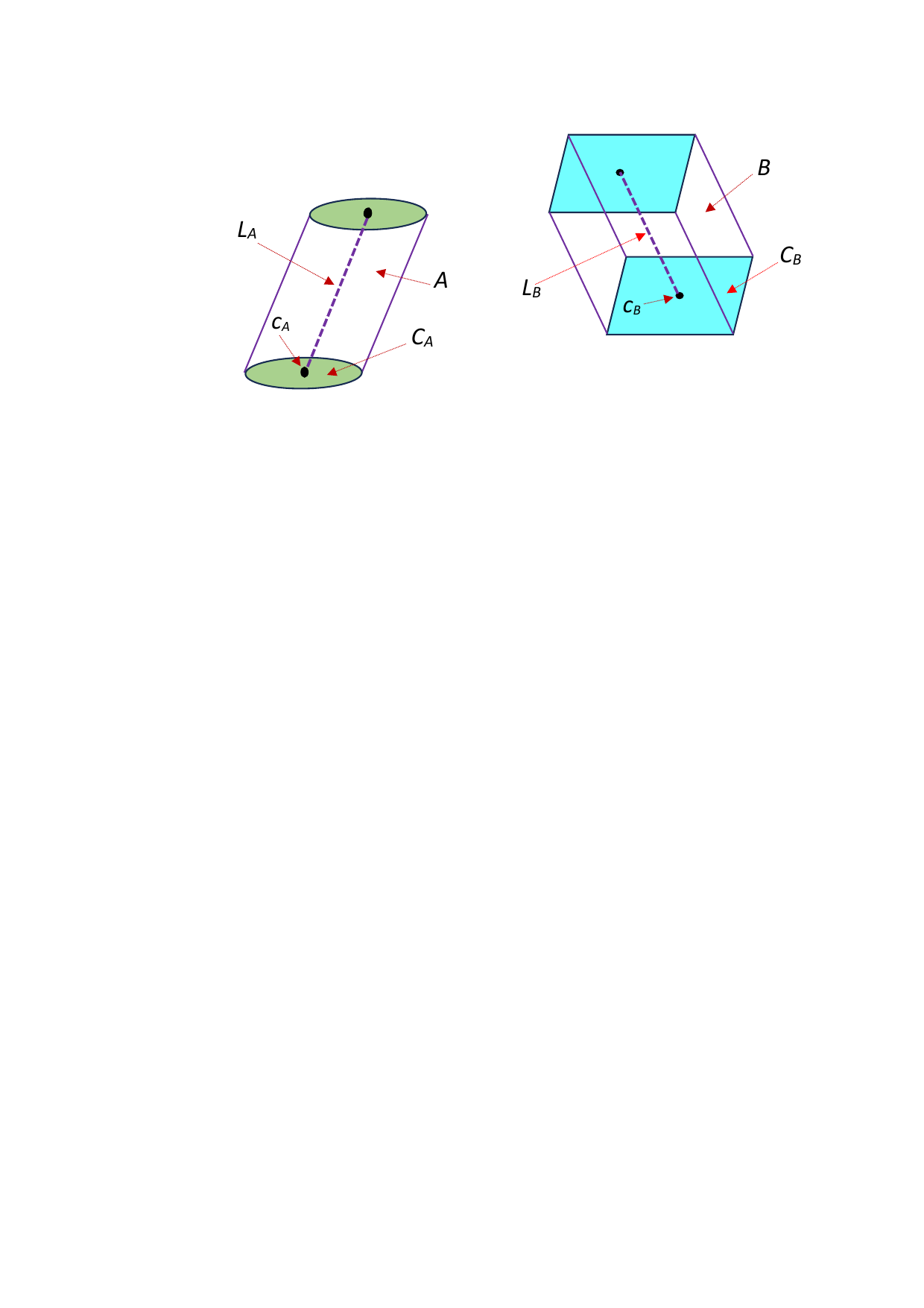}}
\end{center}
 \caption{Two non-parallel generalized (hyper)cylinders $A$ and $B$ in $\R^3$ with any norm (infinite cylinders, shown partly). At least one BAP (Example \bref{ex:GeneralizedCylinders}). }
\label{fig:GeneralizedCylindersNonParallel}
\end{minipage}
\hfill%\pause
\begin{minipage}[htb]{0.48\textwidth}
\begin{center}{\includegraphics[trim=70 620 130 80, clip=true, scale=0.4]{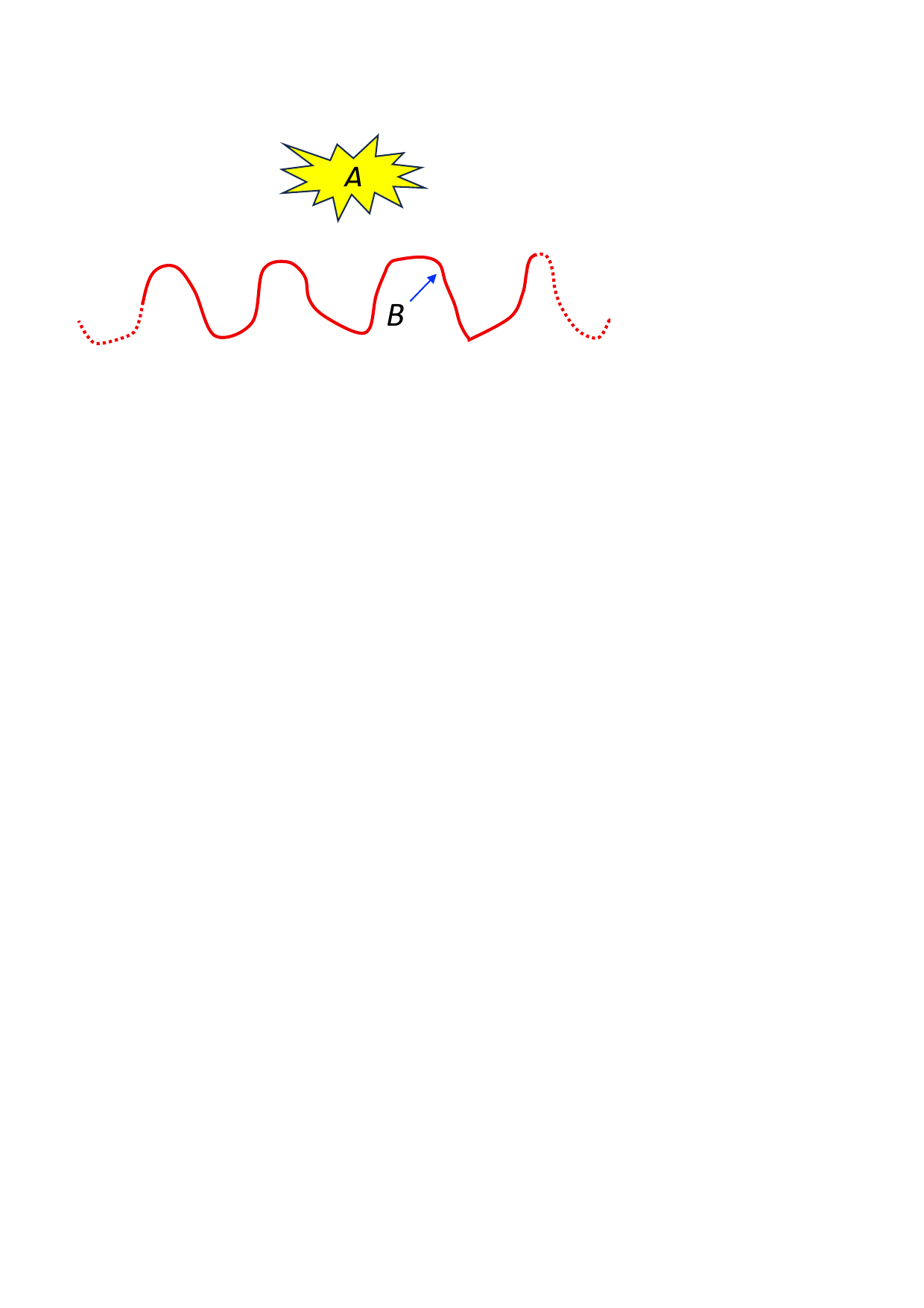}}
\end{center}
 \caption{Two closed sets in the plane (any norm), $A$ is bounded, $B$ is not, hence the coercivity condition \beqref{eq:coercive} holds. At least one BAP (Example \bref{ex:BoundedUnbounded}).}
\label{fig:BoundedUnbounded}
\end{minipage}
\end{figure}
%%%%%%%%%%%%%%%%%%%%%%%%%%%%%%%%%%%%%%%%%%%%%%%%%%%%%%%%%%%%%%%%%%%%

%%%%%%%%%%%%%%%%%%%%%%%%%%%%%%%%%%%%%%%%%%%%%%%%%%%%%%%%%%%%%%%%%%%%
\begin{figure}[t]
\begin{minipage}[b]{0.55\textwidth}
\begin{center}{\includegraphics[trim=130 632 240 90, clip=true, scale=0.57]{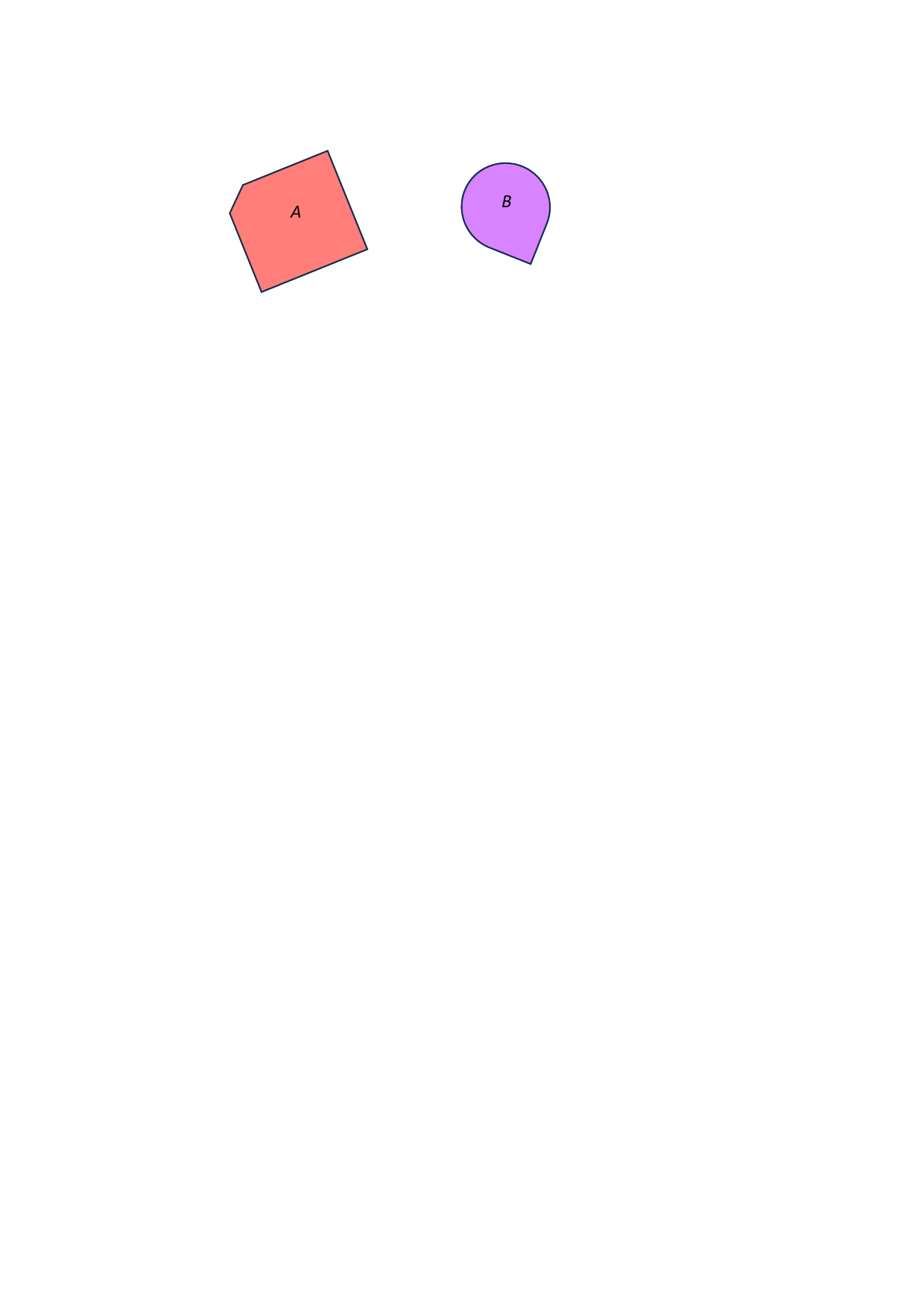}}
\end{center}
 \caption{Two shapes in the Euclidean plane for which the conditions of Corollary \bref{cor:StrictlyConvexNorm}  are satisfied (Example \bref{ex:PolygonDrop}).}
\label{fig:PolygonDrop}
\end{minipage}
%\hfill
%\begin{minipage}[b]{0.54\textwidth}
%\begin{center}{\includegraphics[trim=100 610 380 60, clip=true, scale=0.47]{TwoCylinders.pdf}}
%\end{center}
% \caption{Two non-parallel unbounded cylinders (partly shown) in the Euclidean space $\R^3$ for which the conditions of Corollary \bref{cor:StrictlyConvexNorm}  are satisfied (Example \bref{ex:TwoCylinders}).}
%\label{fig:TwoCylinders}
%\end{minipage}
\end{figure}
%%%%%%%%%%%%%%%%%%%%%%%%%%%%%%%%%%%%%%%%%%%%%%%%%%%%%%%%%%%%%%%%%%%%

\begin{example}\label{ex:IntersectionStrictlyConvexSets}
An illustration of Theorem \bref{thm:IntersectionStrictlyConvexSets}
in the plane, with any norm, is presented in Figure \bref{fig:IntersectionStrictlyConvexSets}. This theorem and Theorem \bref{thm:Existence}\beqref{item:compact} imply that there is a unique BAP relative to $(A,B)$. 
\end{example}

\begin{example}\label{ex:IntersectionStrictlyConvexNorm}
An illustration of Theorem \bref{thm:IntersectionStrictlyConvexNorm} in the plane, with any strictly convex norm, is presented in Figure \bref{fig:IntersectionStrictlyConvexNorm}. This theorem, as well as Theorem \bref{thm:Existence}\beqref{item:compact}, ensure that there is a unique BAP relative to $(A,B)$. 
\end{example}

%%%%%%%%%%%%%%%%%%%%%%%%%%%%%%%%%%%%%%%%%%%%%%%%%%%%%%%%%%%%%%%%%%%%

%%%%%%%%%%%%%%%%%%%%%%%%%%%%%%%%%%%%%%%%%%%%%%%%%%%%%%%%%%%%%%%%%%%%
\begin{figure}[t]
\begin{minipage}[b]{0.45\textwidth}
\begin{center}{\includegraphics[trim=200 602 70 100, clip=true, scale=0.5783]{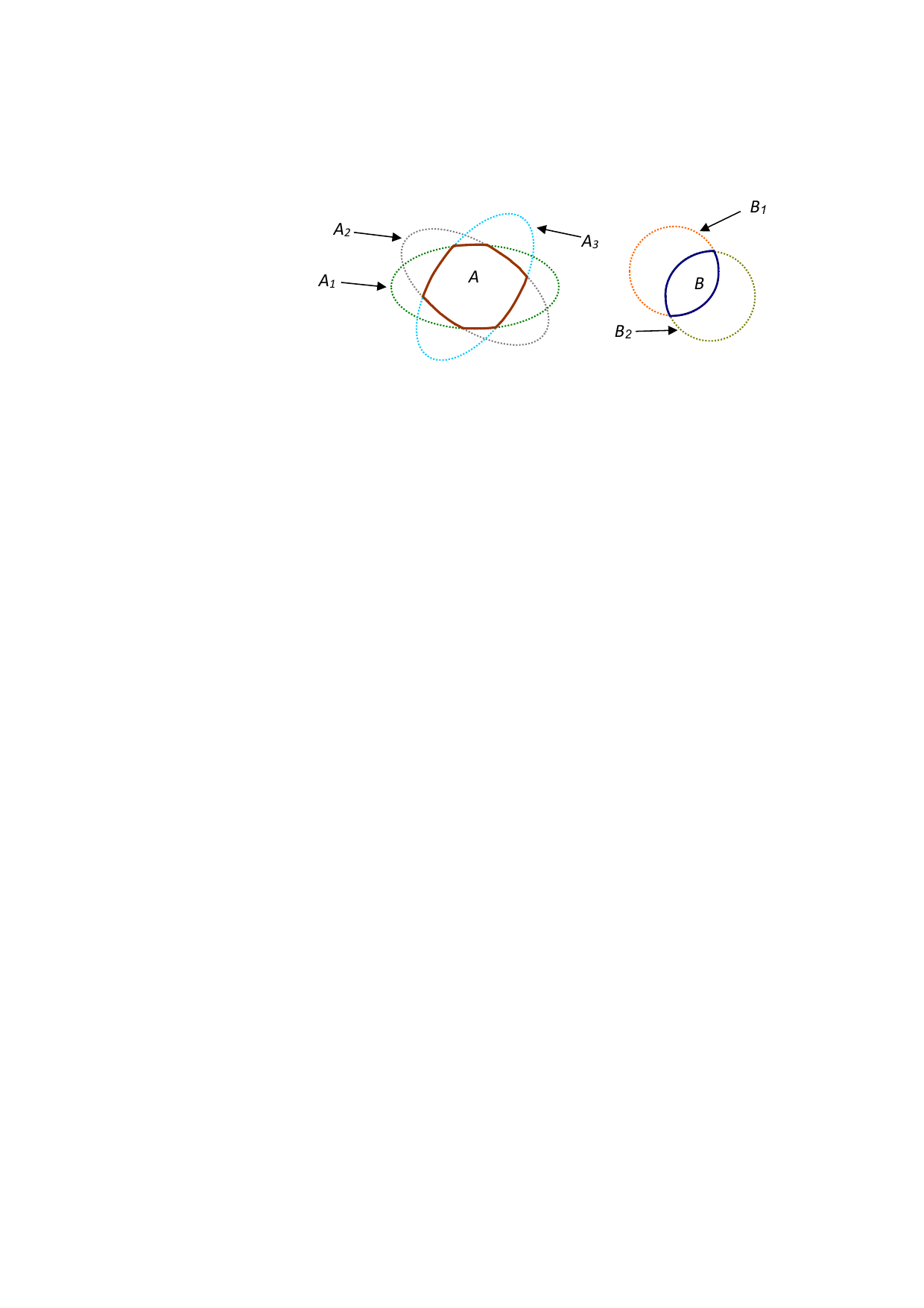}}
\end{center}
 \caption{An illustration of Theorem \bref{thm:IntersectionStrictlyConvexSets}: two intersections (solid lines) of strictly convex subsets (dashed lines) in the  plane with any norm (Example \bref{ex:IntersectionStrictlyConvexSets}). Only the boundaries of the subsets are shown.}
\label{fig:IntersectionStrictlyConvexSets}
\end{minipage}
\begin{minipage}[b]{0.5\textwidth}
\begin{center}{\includegraphics[trim=100 600 100 110, clip=true, scale=0.57]{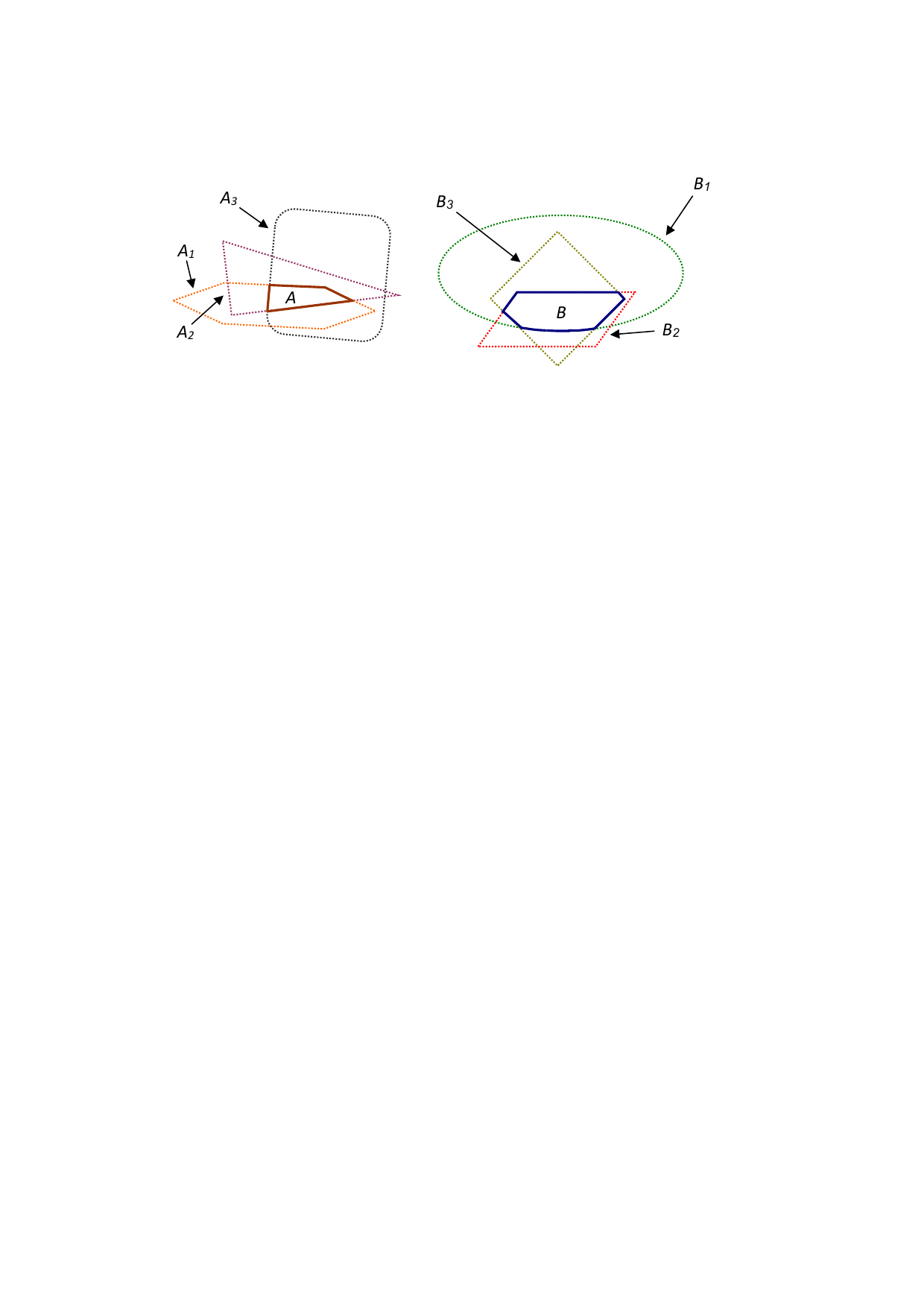}}
\end{center}
 \caption{Two intersections  (solid lines) of subsets (dashed lines) which satisfy the conditions of Theorem \bref{thm:IntersectionStrictlyConvexNorm} and are  located in the plane with any strictly convex norm (Example \bref{ex:IntersectionStrictlyConvexNorm}). Only the boundaries of the sets are shown.}
\label{fig:IntersectionStrictlyConvexNorm}
\end{minipage}
\end{figure}
%%%%%%%%%%%%%%%%%%%%%%%%%%%%%%%%%%%%%%%%%%%%%%%%%%%%%%%%%%%%%%%%%%%%

\section*{Acknowledgments} \textmd{D.R. thanks Simeon Reich, Itai Shafrir and Constantin Z{\u a}linescu for helpful discussion related to 
\cite{BoulosReich2015jour,MatouskovaReich2003jour,BrezisMironescuShafrir2016jour,
RubinsteinShafrir2007jour,LeviShafrir2014jour,VoiseiZalinescu2011jour,Zalinescu2022jour}. This research is supported by the ISF Grant Number 2874/19 within the ISF-NSFC joint research program. The work of Y.C. was also supported by U. S. National Institutes of Health Grant Number R01CA266467 and by the Cooperation Program in Cancer Research of the German Cancer Research Center (DKFZ) and the Israeli Ministry of Innovation, Science and Technology (MOST).}

%\newpage

%\bibliographystyle{siam}
%\bibliography{biblio}

\end{document}